\documentclass[11pt,reqno]{amsart}
\usepackage[a4paper,margin=1.2in]{geometry}
\usepackage[T1]{fontenc}
\usepackage{lmodern}
\usepackage{amsmath,amssymb,amsthm,mathtools,mathrsfs,bm}
\usepackage{microtype}
\usepackage{xcolor}
\usepackage{enumitem}
\usepackage{booktabs,array,longtable,float}
\usepackage[colorlinks=true]{hyperref}
\usepackage[nameinlink,capitalize,noabbrev]{cleveref}
\usepackage[numbers,sort&compress]{natbib}

\definecolor{revisionred}{RGB}{190,0,0}

\hypersetup{
	colorlinks=true,
	linkcolor=blue,
	citecolor=blue,
	urlcolor=blue,
	pdftitle={Sharp decay characterization for the Oldroyd-B system with only velocity dissipation},
	pdfauthor={Zhi Chen, Mingwen Fei, Lvqiao Liu, Jiahong Wu},
	pdfkeywords={Oldroyd-B, velocity dissipation, sharp decay, critical Besov spaces}
}

\numberwithin{equation}{section}

\newtheorem{theorem}{Theorem}[section]
\newtheorem{proposition}[theorem]{Proposition}
\newtheorem{lemma}[theorem]{Lemma}
\newtheorem{corollary}[theorem]{Corollary}
\newtheorem{definition}[theorem]{Definition}

\newtheorem{remark}[theorem]{Remark}
\newtheorem{example}[theorem]{Example}

\newcommand{\R}{\mathbb R}
\newcommand{\N}{\mathbb N}
\newcommand{\Z}{\mathbb Z}
\newcommand{\Pj}{\mathbb P}
\newcommand{\Bdot}{\dot B}
\newcommand{\CL}{\widetilde L}
\newcommand{\dyad}{\dot\Delta}
\newcommand{\lowcut}{\dot S}
\newcommand{\Lamb}{\Lambda}
\newcommand{\diver}{\operatorname{div}}

\newcommand{\supp}{\operatorname{supp}}
\newcommand{\Id}{\operatorname{Id}}
\newcommand{\ee}{\mathrm e}
\newcommand{\dd}{\,\mathrm d}
\newcommand{\ii}{\mathrm i}
\newcommand{\low}{\ell}
\newcommand{\high}{h}
\newcommand{\cF}{\mathcal F}
\newcommand{\cS}{\mathcal S}
\newcommand{\cK}{\mathcal K}
\newcommand{\cL}{\mathcal L}

\newcommand{\cN}{\mathcal N}
\newcommand{\cE}{\mathcal E}
\newcommand{\cD}{\mathcal D}
\newcommand{\cX}{\mathcal X}
\newcommand{\cH}{\mathcal H}

\newcommand{\cR}{\mathcal R}
\newcommand{\cT}{\mathcal T}

\newcommand{\cA}{\mathcal A}
\newcommand{\frakB}{\mathfrak B}
\newcommand{\Besov}[3]{\dot B^{#1}_{#2,#3}}
\newcommand{\scal}{s_c}

\newcommand{\ip}[2]{\left\langle #1,#2\right\rangle}
\newcommand{\norm}[1]{\left\lVert #1\right\rVert}
\newcommand{\abs}[1]{\left|#1\right|}
\newcommand{\braket}[1]{\left\langle #1\right\rangle}

\title[Sharp decay characterization for Oldroyd-B with velocity dissipation]{Sharp decay characterization for the Oldroyd-B system with only velocity dissipation}
\author{Zhi Chen}
	\address{School of  Mathematics and Statistics, Anhui Normal University, Wuhu 241002, China}
	\email{zhichenmath@ahnu.edu.cn}
	\author{Mingwen Fei}
	\address{School of  Mathematics and Statistics, Anhui Normal University, Wuhu 241002, China}
	\email{mwfei@ahnu.edu.cn}
	\author{Lvqiao Liu}
	\address{School of  Mathematics and Statistics, Anhui Normal University, Wuhu 241002, China}
	\email{lvqiaoliu@ahnu.edu.cn}
	\author{Jiahong Wu}
	\address{Department of Mathematics, University of Notre Dame, Notre Dame, IN 46556, USA}
	\email{jwu29@nd.edu}
\date{\today}
\subjclass[2020]{35Q35, 35B40, 76A10}
\keywords{Oldroyd-B model; velocity dissipation; sharp decay characterization; critical Besov spaces; stationary stresses; Wiegner-type remainder}

\begin{document}

\begin{abstract}
We study the incompressible Oldroyd-B system on $\mathbb R^d$, $d\ge2$, in the regime in which the velocity is the only dissipated quantity: the stress tensor undergoes neither relaxation nor diffusion.
In this regime the operator $\mathbb P\operatorname{div}$ acting on symmetric stresses has an infinite-dimensional kernel consisting of exact stationary states, so no decay estimate can hold for the full stress tensor and the existing sharp-decay theory for dissipative systems does not apply.
We identify the active pair $U=(u,\Lambda^{-1}\mathbb P\operatorname{div}\tau)$ as the correct decaying observable and establish, in a critical hybrid Besov framework, a complete characterization of its large-time behavior.
For $-d/2<\sigma_1<\sigma\le d/2-1$ we prove that $\|U(t)\|_{\mathcal H^{\sigma}}=O(t^{-(\sigma-\sigma_1)/2})$ holds if and only if the low-frequency part of $U_0$ belongs to $\dot B^{\sigma_1}_{2,\infty}$, and, for $\sigma<d/2-1$, that the two-sided estimate $\|U(t)\|_{\mathcal H^{\sigma}}\simeq t^{-(\sigma-\sigma_1)/2}$ holds if and only if the low-frequency part of $U_0$ belongs to a nondegenerate subclass $\mathfrak B^{\sigma_1}_{2,\infty}$ of $\dot B^{\sigma_1}_{2,\infty}$.
No decay is assumed, or obtained, for the stress tensor itself; it enters the analysis only through its small critical norm, as a coefficient.
Two ingredients are new.
First, a frequency-resolved inverse estimate recovers the negative Besov regularity of the data from the decay of the solution; it is closed by a weighted discrete Gronwall lemma and controls the non-decaying passive stress through a localized commutator estimate for $[\Lambda^{-1}\mathbb P\operatorname{div},u\cdot\nabla]$.
Second, for $\sigma<d/2-1$, we show that the low-frequency Duhamel source has a vanishing dyadic tail, which upgrades the nonlinear error $U-U_L$ to $o(t^{-(\sigma-\sigma_1)/2})$ with no additional hypothesis: the Wiegner-type remainder property is obtained as a conclusion rather than imposed as an assumption.
\end{abstract}

\maketitle

\tableofcontents

\section{Introduction and main results}\label{sec:intro}

\subsection{The model}
The Oldroyd-B (OB) model, introduced by Oldroyd \cite{Oldroyd1958}, is among the most classical constitutive models for viscoelastic fluids. It describes the competition between viscous dissipation of the velocity and elastic response of the polymeric stress that characterizes non-Newtonian behavior. We consider the following family of incompressible Oldroyd-B systems on $[0,\infty)\times\R^d$, $d\ge2$:
\begin{equation}\label{eq:parametric-GOB}
	\begin{cases}
		\partial_tu+(u\cdot\nabla)u+\nu\Lamb^{2\kappa}u+\nabla P
		=\nu_1\diver\tau,\\
		\partial_t\tau+(u\cdot\nabla)\tau+a\tau
		+\eta\Lamb^{2\zeta}\tau+Q(\tau,\nabla u)
		=\nu_2D(u),\\
		\diver u=0,\\
		u(0,x)=u_0(x),\qquad \tau(0,x)=\tau_0(x).
	\end{cases}
\end{equation}
Here $\Lamb=(-\Delta)^{1/2}$, $u(t,x)=(u^1,\dots,u^d)(t,x)$ is the velocity field, $P$ is the scalar pressure, and $\tau(t,x)=(\tau^{ij}(t,x))_{d\times d}$ is the symmetric non-Newtonian part of the stress tensor, with $[\diver\tau]^i=\partial_j\tau^{ij}$\footnote{Throughout, $i,j=1,\dots,d$ and we use the Einstein summation convention over repeated indices.}.
The tensor $D(u)=\frac12(\nabla u+(\nabla u)^\top)$ is the symmetric part of the velocity gradient, where $(\nabla u)_{ij}=\partial_ju^i$, and $W(u)=\frac12(\nabla u-(\nabla u)^\top)$ is its skew-symmetric part. The bilinear form
\begin{equation*}
	Q(\tau,\nabla u)=\tau W(u)-W(u)\tau+b\big(D(u)\tau+\tau D(u)\big),\qquad b\in[-1,1],
\end{equation*}
is the standard objective derivative term; $b=0$ is the corotational case.
The constants $\nu_1,\nu_2>0$ are coupling parameters, $\nu\ge0$ and $\kappa\in(0,1]$ describe the velocity dissipation, $a\ge0$ is the stress-relaxation (damping) coefficient, and $\eta\ge0$, $\zeta\in(0,1]$ describe the stress diffusion.
We denote by $\Pj=\Id+\nabla(-\Delta)^{-1}\diver$ the Leray projection onto divergence-free fields.

The mathematical theory of \eqref{eq:parametric-GOB} depends decisively on which dissipative mechanisms are present; representative results are collected in Table~1.
\begin{table}[H]
	\centering
	\caption{Representative dissipative regimes for the incompressible OB system.}
	\begingroup
	\small
	\setlength{\tabcolsep}{3.5pt}
	\renewcommand{\arraystretch}{1.20}
	\begin{tabular}{@{}
			>{\raggedright\arraybackslash}p{0.275\textwidth}
			>{\raggedright\arraybackslash}p{0.275\textwidth}
			>{\raggedright\arraybackslash}p{0.485\textwidth}@{}}
		\toprule
		Parameter regime & Dissipative mechanism & Representative results \\
		\midrule
		$\nu>0$, $\kappa=1$,\newline
		$a>0$, $\eta=0$
		&
		velocity dissipation and stress relaxation
		&
		Strong solutions and critical-space theories
		\cite{GuillopeSaut1990,CheminMasmoudi2001,ChenMiao2008,ZhangFang2012,
			ZiFangZhang2014,FangZi2016}; corotational weak solutions
		\cite{LionsMasmoudi2000}; two-sided decay estimates
		\cite{HieberNguyenNichePerusato2026}.\\[3mm]
		
		$\nu>0$, $\frac12\le\kappa\le1$,\newline
		$a=\eta=0$
		&
		velocity dissipation only
		&
		Global small solutions for $\kappa=1$ \cite{Zhu2018,ChenHao2019,Zhai2021,LinWeiWu2022,ChenZhu2023};
		fractional velocity dissipation \cite{ZhaoWu2023};
		optimal upper decay rates \cite{Wan2019,ZhaiDecay2019}.\\[3mm]
		
		$\nu>0$, $\kappa=1$,\newline
		$a\ge0$, $\eta>0$, $\zeta=1$
\newline
		&
		velocity and stress diffusion
		&
		Two-dimensional global regularity with diffusive stress
		\cite{ConstantinKliegl2012}.
		\\[3mm]
		
		$\nu=0$,\newline
		$a>0$, $\eta>0$, $\zeta=1$
		&
		stress relaxation and diffusion, no velocity dissipation
		&
		Small-data global regularity in two and three dimensions
		\cite{ElgindiRousset2015,ElgindiLiu2015}.\\[3mm]
		
		$\nu=0$,\newline
		$a=0$, $\eta>0$, $\frac12\le\zeta\le1$\newline
		&
		fractional stress diffusion only
		&
		Sobolev stability for $\zeta\ge\frac12$ \cite{ConstantinWuZhaoZhu2021};
		critical hybrid Besov theory \cite{Chen,WuZhao2022};
		sharp decay \cite{WangWuXuZhong2022,ChenFeiLiuWu2026}.\\[3mm]
		
		anisotropic replacements of
		$\Lamb^{2\kappa}$ and $\Lamb^{2\zeta}$
		&
		mixed partial dissipation
		&
		Two- and three-dimensional results
		\cite{FengWangWu2022,LinWuBoardman2025}.\\
		\bottomrule
	\end{tabular}
	\endgroup
\end{table}

The coupling terms $\diver\tau$ and $D(u)$ are the source of the hidden dissipation exploited in the partially dissipative theory. The two stress mechanisms act very differently: relaxation ($a>0$) damps every component of $\tau$ but supplies no spatial derivative, whereas diffusion ($\eta>0$) regularizes $\tau$. When $a=\eta=0$, the velocity dissipation is transmitted, through the coupling, only to the projected stress component $\Pj\diver\tau$; it has no effect at all on $\ker(\Pj\diver)$.

Throughout this paper we work in the regime
\begin{equation}\label{eq:present-parameter-regime}
	\nu>0,\qquad \kappa=1,\qquad a=\eta=0,\qquad
	\nu_1>0,\qquad \nu_2>0,
\end{equation}
in which \eqref{eq:parametric-GOB} reduces to
\begin{equation}\label{eq:GOB}
	\begin{cases}
		\partial_tu+(u\cdot\nabla)u-\nu\Delta u+\nabla P=\nu_1\diver\tau,\\
		\partial_t\tau+(u\cdot\nabla)\tau+Q(\tau,\nabla u)=\nu_2D(u),\\
		\diver u=0,\\
		u(0,x)=u_0(x),\qquad \tau(0,x)=\tau_0(x).
	\end{cases}
\end{equation}
For \eqref{eq:GOB}, Zhu \cite{Zhu2018} established global smooth solutions for small data in $\R^3$, Chen and Hao \cite{ChenHao2019} developed the critical hybrid Besov theory, and Zhai \cite{Zhai2021} introduced related critical frameworks accommodating highly oscillatory velocity data. In two dimensions, the theory comprises the periodic result of Lin, Wei and Wu \cite{LinWeiWu2022} and the whole-space result of Chen and Zhu \cite{ChenZhu2023}. Upper decay rates were obtained by Wan \cite{Wan2019} and Zhai \cite{ZhaiDecay2019}; related three-dimensional decay results include \cite{HieberWenZi2019,ChenLiYao2026}. All of these decay results are one-sided: they provide upper bounds under a sufficient condition on the data.

\subsection{Sharp decay characterization and the stationary obstruction}
A \emph{sharp} decay theory asks for a complete answer to the following inverse question: \emph{which initial data produce exactly a given algebraic decay rate?}
For dissipative systems with a heat-like low-frequency behavior, Brandolese \cite{Brandolese2016} showed that the rate $t^{-(\sigma-\sigma_1)/2}$ is attained precisely by data whose low frequencies lie in the Besov space $\dot B^{\sigma_1}_{2,\infty}$, and that a two-sided rate corresponds to a nondegeneracy condition on the dyadic blocks of the data. This viewpoint has recently been extended to the compressible Navier-Stokes equations by Brandolese, Shou, Xu and Zhang \cite{BrandoleseShouXuZhang2024}, to partially dissipative hyperbolic systems by Shou, Xu and Zhang \cite{ShouXuZhang2025}, to viscous compressible fluids with capillarity by Shi, Shou and Xu \cite{ShiShouXu2026}, and, in our previous work \cite{ChenFeiLiuWu2026}, to the Oldroyd-B system with stress diffusion. In all of these settings every component of the solution decays, and the characterization concerns the whole state.

The velocity-dissipative system \eqref{eq:GOB} does not fit this paradigm. Let
\begin{equation}\label{eq:kernel-intro}
	\cK:=\ker(\Pj\diver)=\bigl\{K \text{ symmetric}:\ \Pj\diver K=0\bigr\}.
\end{equation}
For every sufficiently regular $K\in\cK$ the pair $(u,\tau)\equiv(0,K)$ is an exact stationary solution of \eqref{eq:GOB} (Proposition~\ref{thm:stationary-1}), and $\cK$ is infinite-dimensional: it contains all Hessians $\nabla^2\phi$ with $\phi\in\cS(\R^d)$ (Example~\ref{ex:Hessian-kernel}). Consequently \emph{no} estimate of the form $\|\tau(t)\|_X\to0$ can hold for small data in any reasonable space $X$, and a sharp decay statement for the full pair $(u,\tau)$ is false in general. This is a genuine structural obstruction, not a technical one; to the best of our knowledge, no sharp (necessary and sufficient) decay characterization is available for a fluid system with an infinite-dimensional family of stationary states.

The way out is to identify the part of the stress that is actually driven by the dissipation. Setting
\begin{equation}\label{active-z}
	\scal:=\frac d2-1,
	\qquad
	\cT:=\Lambda^{-1}\Pj\diver,
	\qquad
	z:=\cT\tau,
	\qquad
	U:=(u,z),
\end{equation}
we show in Section~\ref{sec:linear-chara} that every symmetric stress splits canonically as $\tau=\cR z+K$ with $K\in\cK$, where $\cR$ is an explicit order-zero right inverse of $\cT$ (Lemmas~\ref{lem:right-inverse} and~\ref{prop:quotient-decomp}). The \emph{active} component $z$ is a divergence-free vector field carrying exactly the same information as the equivalence class of $\tau$ modulo $\cK$, while the \emph{passive} component $K$ is invisible to the linearized dynamics. The active pair $U=(u,z)$ satisfies a closed linear system whose low-frequency part behaves like a heat semigroup (Proposition~\ref{thm:low-frequency-Fourier}) and whose high-frequency part is exponentially damped (Proposition~\ref{prop:high-frequency}). This makes $U$ the natural observable for a sharp decay theory. Since $u$ and $z$ have different regularities at high frequencies, the correct norms are hybrid: low frequencies are measured at a common index, high frequencies at the critical indices $d/2-1$ for $u$ and $d/2$ for $z$.

\subsection{Main results}
We begin by recalling the global existence theory in the critical hybrid Besov framework. The precise definitions of the homogeneous Besov spaces $\Bdot^s_{2,r}$, of the Chemin-Lerner spaces $\CL^q_T(\Bdot^s_{2,r})$, of the low-frequency and high-frequency parts $f^\low$, $f^\high$ relative to a fixed threshold $j_0<0$, and of the corresponding truncated seminorms are collected in Appendix~\ref{app:para}.

\begin{theorem}[\cite{ChenHao2019}]\label{thm:known-global}
	Let $d\ge2$. There exists $\varepsilon_0>0$ such that if $\diver u_0=0$, $\tau_0$ is symmetric, and
	\begin{equation}\label{eq:X0}
		\cX_0:=
		\norm{(u_0,\tau_0)^\low}_{\Bdot^{d/2-1}_{2,1}}
		+\norm{u_0^\high}_{\Bdot^{d/2-1}_{2,1}}
		+\norm{\tau_0^\high}_{\Bdot^{d/2}_{2,1}}
		\le\varepsilon_0,
	\end{equation}
	then \eqref{eq:GOB} admits a unique global solution $(u,\tau)$ with $\cX(t)<\infty$ for all $t>0$, where
	\begin{align}
		\cX(t):={}&
		\norm{(u,\tau)^\low}_{\CL^\infty_t(\Bdot^{d/2-1}_{2,1})}
		+\norm{u^\high}_{\CL^\infty_t(\Bdot^{d/2-1}_{2,1})}
		+\norm{\tau^\high}_{\CL^\infty_t(\Bdot^{d/2}_{2,1})}\notag\\
		&+\norm{(u,z)^\low}_{L^1_t(\Bdot^{d/2+1}_{2,1})}
		+\norm{u^\high}_{L^1_t(\Bdot^{d/2+1}_{2,1})}
		+\norm{z^\high}_{L^1_t(\Bdot^{d/2}_{2,1})}
		\le C\cX_0,\label{6364553}
	\end{align}
	with $z=\cT\tau$ as in \eqref{active-z} and $C>0$ independent of $t$.
\end{theorem}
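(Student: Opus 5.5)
Global existence is obtained by closing an a priori bound $\cX(t)\le C\cX_0+C\cX(t)^2$ and then running a standard continuity argument; uniqueness and local existence follow from the same estimates applied to differences. The a priori bound will be built from estimates on the linearized system for the active pair $U=(u,z)$, $z=\cT\tau$. Apply $\cT=\Lambda^{-1}\Pj\diver$ to the stress equation. Since $\Pj\diver D(u)=\frac12\Delta u$ for $\diver u=0$, we get
\[
\partial_t z+\tfrac{\nu_2}{2}\Lambda u=-\cT\bigl(u\cdot\nabla\tau+Q(\tau,\nabla u)\bigr).
\]
The velocity equation, after applying $\Pj$, reads
\[
\partial_t u-\nu\Delta u-\nu_1\Lambda z=-\Pj(u\cdot\nabla u).
\]
This linear system is exactly the one treated in Propositions~\ref{thm:low-frequency-Fourier} and~\ref{prop:high-frequency}.

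\textbf{Frequency-localized linear estimates.} Localize with $\dyad_j$ and work with the weighted energy $\frac{\nu_2}{2}|u_j|^2+\nu_1|z_j|^2$, in which the coupling terms cancel. Add a cross term $\mp\delta\,\ip{\Lambda u_j}{z_j}$ whose sign is chosen frequency-wise. This yields the following bounds:
\begin{itemize}
\item at low frequencies ($j\le j_0$), $\frac{d}{dt}\|U_j\|^2+c2^{2j}\|U_j\|^2\lesssim$ nonlinear terms, i.e.\ heat-like damping for both components;
\item at high frequencies, damping $c2^{2j}$ for $u_j$ and damping $c$ for $z_j$, which is what the $L^1_t$-bounds with $z^\high$ in $\Bdot^{d/2}$ reflect.
\end{itemize}
Integrating in time and summing over $j$ with weights $2^{j(d/2-1)}$ (low frequencies) and $2^{j(d/2-1)}$ for $u$, $2^{jd/2}$ for $z$ (high frequencies) gives the $L^\infty_t$ and $L^1_t$ parts of $\cX$ for $U$.

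\textbf{Estimates for the full stress.} The full $\tau$ is not damped, so its $\CL^\infty_t$ bound comes directly from the transport equation
\[
\partial_t\tau+u\cdot\nabla\tau=-Q(\tau,\nabla u)+\nu_2D(u).
\]
At high frequencies we use a commutator estimate for $[\dyad_j,u\cdot\nabla]$ in $\Bdot^{d/2}_{2,1}$. This bounds
\[
\|\tau^\high\|_{\CL^\infty_t(\Bdot^{d/2})}\le\|\tau_0^\high\|_{\Bdot^{d/2}}+C\int_0^t\|\nabla u\|_{\Bdot^{d/2}}\bigl(\|\tau\|+1\bigr)+\|u\|_{L^1_t(\Bdot^{d/2+1})}.
\]
The linear source $D(u)$ is controlled by the $u$-dissipation already obtained. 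The low-frequency $\Bdot^{d/2-1}$ bound for $\tau$ follows similarly from product laws.

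\textbf{Nonlinear terms (the main obstacle).} Every nonlinear term must be bounded by $\cX^2$. The critical point is the high-frequency part of $\cT(u\cdot\nabla\tau)$ in the $z$-equation, which has only order-zero damping: a naive bound loses one derivative on $\tau$. The remedy is to write
\[
\cT(u\cdot\nabla\tau)=u\cdot\nabla z+[\cT,u\cdot\nabla]\tau,
\]
and handle the two pieces separately:
\begin{itemize}
\item the transport part $u\cdot\nabla z$ is absorbed in the energy estimate, since the localized transport term is controlled via $\int\|\nabla u\|_{L^\infty}\|z_j\|$;
\item the commutator is of order zero and is bounded by $\|\nabla u\|_{\Bdot^{d/2}}\|\tau\|_{\Bdot^{d/2}}$.
\end{itemize}
The terms $Q(\tau,\nabla u)$ are bounded by $\|\tau\|_{L^\infty\cap\Bdot^{d/2}}\|\nabla u\|_{L^1_t\Bdot^{d/2}}\lesssim\cX^2$. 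The term $u\cdot\nabla u$ is handled by standard Navier–Stokes product estimates. Collecting everything gives $\cX(t)\le C\cX_0+C\cX(t)^2$, and smallness $\varepsilon_0$ closes the bootstrap with a constant $C$ independent of $t$.
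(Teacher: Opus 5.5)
The genuine gap is in your bound for the low-frequency part of the full stress.

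\textbf{Where the argument fails.} You obtain $\norm{\tau^\low}_{\CL^\infty_t(\Bdot^{d/2-1}_{2,1})}$ from the transport equation and treat $\nu_2D(u)$ as a source controlled by the $u$-dissipation. At high frequencies this is fine, since $\norm{D(u)^\high}_{L^1_t(\Bdot^{d/2}_{2,1})}\lesssim\norm{u^\high}_{L^1_t(\Bdot^{d/2+1}_{2,1})}$. At low frequencies you would need $\norm{D(u)^\low}_{L^1_t(\Bdot^{d/2-1}_{2,1})}\simeq\norm{u^\low}_{L^1_t(\Bdot^{d/2}_{2,1})}$. This is not part of $\cX$, because for $j\le j_0$ the index $d/2+1$ is weaker than $d/2$.

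In fact this quantity is infinite in general. Take the linear flow with $\tau_0=0$. Lemma~\ref{lem:low-semigroup} gives $\widehat u_L(t,\xi)=\ee^{-\nu r^2t/2}\bigl(\cos(b(r)t)-\frac{\nu r^2}{2b(r)}\sin(b(r)t)\bigr)\widehat u_0(\xi)$ with $r=|\xi|$. Hence $\int_0^\infty\norm{\dyad_ju_L(t)}_2\,\dd t\gtrsim2^{-2j}\norm{\dyad_ju_0}_2$, and so $\norm{D(u_L)^\low}_{L^1_t(\Bdot^{d/2-1}_{2,1})}\gtrsim\norm{u_0^\low}_{\Bdot^{d/2-2}_{2,1}}$, which is infinite for general data.

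The boundedness of $\tau^\low$ comes from cancellation in time, not from $L^1_t$ integrability: linearly, $\nu_2\int_0^tD(u)\,\dd s=\cR\bigl(z(t)-z_0\bigr)$. Every nonlinear estimate you list uses the full critical norm of $\tau$, that is $\cA$. This includes $[\cT,u\cdot\nabla]\tau$, $Q(\tau,\nabla u)$, and your own high-frequency transport bound, which involves the full $\norm{\tau}_{\Bdot^{d/2}_{2,1}}$. So $\cX\le C\cX_0+C\cX^2$ does not close as written.

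\textbf{The missing idea.} You need the cancellation between the two linear couplings. Either of two routes supplies it.
\begin{enumerate}
\item At low frequencies, estimate the full pair with the symmetric energy $\nu_2\norm{\dyad_ju}_2^2+\nu_1\norm{\dyad_j\tau}_2^2$. Because $\tau$ is symmetric, $\ip{\diver\dyad_j\tau}{\dyad_ju}+\ip{D(\dyad_ju)}{\dyad_j\tau}=0$, so only quadratic sources remain. The term $u\cdot\nabla\tau$ is then handled through $[\dyad_j,u\cdot\nabla]$.
\item Use Lemma~\ref{prop:quotient-decomp}. Since $D(u)=-\frac12\Lamb\cR u$, the passive part $K=\tau-\cR z$ solves $\partial_tK+u\cdot\nabla K=[\cR\cT,u\cdot\nabla]\tau-(\Id-\cR\cT)Q(\tau,\nabla u)$, with no linear source. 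Then $\tau^\low$ is controlled by $z^\low$ and $K^\low$.
\end{enumerate}
The paper itself does not reprove this theorem; it cites \cite{ChenHao2019}. Its Proposition~\ref{prop:act-Lyap} re-derives only the active bounds and takes $\sup_t\cA(t)\le C\cX_0$ as input, which is exactly the piece your argument fails to supply.

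\textbf{Two smaller points.}
\begin{enumerate}
\item At high frequencies, an energy that weights $u_j$ and $z_j$ equally cannot produce the offset between the weights $2^{j(d/2-1)}$ and $2^{jd/2}$. You need $\norm{u_j}_2^2+b\norm{q_j}_2^2-2\kappa\ip{u_j}{q_j}$ with $q_j=\Lamb z_j$ and $b\beta_2=\kappa\nu$, as in Proposition~\ref{6376976}. The sign of the cross term is the same at all frequencies; what changes is the relative weight $2^{2j}$ on $z_j$.
\item Uniqueness does not follow from the same estimates applied to differences. The term $\delta u\cdot\nabla\tau$ loses a derivative, so differences must be measured in a lower-regularity space.
\end{enumerate}
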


\begin{remark}
	The existence theorem and the critical estimates are taken from \cite{ChenHao2019}; related $L^p$ theories and decay estimates are developed in \cite{Zhai2021,ZhaiDecay2019}. The hybrid stress norm in \eqref{eq:X0} is equivalent to the norm of $\dot B^{d/2-1}_{2,1}\cap\dot B^{d/2}_{2,1}$. The dissipative bounds on the active variable $z$ in \eqref{6364553} are also re-derived, in a form adapted to our purposes, in Proposition~\ref{prop:act-Lyap} below. In what follows, $\varepsilon_0$ is allowed to be decreased finitely many times; all smallness conditions are of the form $\cX_0\le\varepsilon_0$ with $\varepsilon_0$ depending only on $d,\nu,\nu_1,\nu_2,b,\sigma_1$ and on the Littlewood-Paley threshold.
\end{remark}

We now introduce the hybrid norms in which the decay is measured.

\begin{definition}\label{def:hybrid-a-1}
	Fix a homogeneous Littlewood-Paley decomposition with threshold $j_0<0$ as in Appendix~\ref{app:para}. For $\rho\le\scal$ and $U=(u,z)$, set
	\begin{align}
		\norm{U}_{\cH^\rho}
		&:={}
		\norm{U^{\low}}_{\Besov{\rho}{2}{1}}
		+\norm{u^{\high}}_{\Besov{\scal}{2}{1}}
		+\norm{z^{\high}}_{\Besov{\scal+1}{2}{1}}.
		\label{eq:Hrho}
	\end{align}
	Thus $\cH^\rho$ is the $p=2$ hybrid active norm with low-frequency target index $\rho$; its high-frequency part is always the critical one. Note that $\norm{U}_{\cH^{\scal}}$ is exactly the active part of the critical norm in \eqref{6364553}.
\end{definition}

\begin{definition}\label{def:frakB}
	Let $s\in\R$. A tempered distribution $F$ belongs to $\frakB^{s}_{2,\infty}$ if $F\in\Bdot^{s}_{2,\infty}$ and there exist constants $c_0>0$, $M\in\N$, and a strictly decreasing sequence of integers $(j_k)_{k\ge1}$ with $j_k\to-\infty$ such that
	\begin{equation}\label{eq:frakB-def}
		1\le j_k-j_{k+1}\le M,
		\qquad
		2^{sj_k}\norm{\dyad_{j_k}F}_{L^2}\ge c_0
		\qquad\text{for all }k\ge1.
	\end{equation}
\end{definition}

The class $\frakB^{s}_{2,\infty}$ consists of those elements of $\Bdot^{s}_{2,\infty}$ whose dyadic blocks do not degenerate along a sequence of low frequencies with bounded gaps; it is the natural nondegeneracy condition for two-sided decay, cf.\ \cite{Brandolese2016,BrandoleseShouXuZhang2024}.

Our main result is the following complete characterization of the decay of the active pair.

\begin{theorem}\label{thm:non-trans}
	Let $d\ge 2$ and let $(u,\tau)$ be the global solution of \cref{thm:known-global}, with $\varepsilon_0$ in \eqref{eq:X0} sufficiently small. Set $U=(u,\cT\tau)$ and let $U_L(t)=e^{t\cL}U_0$, where $\cL$ is the active linear operator defined in \eqref{64238682}. Assume
	\begin{equation}
		-\frac d2<\sigma_1<\sigma\le\scal
		\label{eq:main-assumptions}
	\end{equation}
	and set
	\begin{equation}
		\alpha:=\frac{\sigma-\sigma_1}{2}>0.
		\label{eq:alpha}
	\end{equation}
	\begin{enumerate}[label=\textup{(\roman*)},leftmargin=2.2em]
		\item \textbf{Upper characterization.}
		The following two statements are equivalent:
		\begin{align}
			&U_0^{\low}\in\Besov{\sigma_1}{2}{\infty};
			\label{eq:negative-datum}\\
			&\text{there exists } C>0 \text{ such that }\quad
			\norm{U(t)}_{\cH^\sigma}
			\le C\langle t\rangle^{-\alpha}
			\quad\text{for all } t\ge0.
			\label{eq:upper-nonlinear}
		\end{align}
		
		\item \textbf{Automatic Wiegner-type property.}
		If $\sigma<\scal$ and either of the equivalent conditions in part~\textup{(i)} holds, then
		\begin{equation}
			\norm{U(t)-U_L(t)}_{\cH^\sigma}
			=o\!\left(t^{-\alpha}\right)
			\qquad(t\to\infty).
			\label{eq:auto-wiegner}
		\end{equation}
		
		\item \textbf{Two-sided characterization.}
		If $\sigma<\scal$, the following two statements are equivalent:
		\begin{align}
			&U_0^{\low}\in\frakB^{\sigma_1}_{2,\infty};
			\label{eq:nondeg-datum}\\
			&\text{there exist } c,C>0 \text{ and } t_0\ge1 \text{ such that }\notag\\
			&\qquad c\,t^{-\alpha}
			\le \norm{U(t)}_{\cH^\sigma}
			\le C\,t^{-\alpha}
			\quad\text{for all } t\ge t_0.
			\label{eq:two-sided-nonlinear}
		\end{align}
	\end{enumerate}
\end{theorem}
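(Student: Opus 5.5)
The plan is to work entirely with the active pair $U=(u,z)$, written in Duhamel form
\[
U(t)=U_L(t)+\int_0^te^{(t-s)\cL}N(s)\dd s .
\]
Here $N=(N_u,N_z)$ collects the nonlinear terms:
\[
N_u=-\Pj(u\cdot\nabla u),\qquad
N_z=-\cT\bigl(u\cdot\nabla\tau+Q(\tau,\nabla u)\bigr).
\]
I would use the splitting $\tau=\cR z+K$ (Lemmas~\ref{lem:right-inverse} and~\ref{prop:quotient-decomp}) only to see that $N$ is bilinear in $(u,\tau)$ with at least one active factor. The passive factor $K$ appears only through $\norm{\tau}_{\Bdot^{d/2-1}\cap\Bdot^{d/2}}\lesssim\cX_0$, which is not assumed to decay. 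The term $\cT(u\cdot\nabla K)$ I would rewrite as $u\cdot\nabla\cT K+[\cT,u\cdot\nabla]K=[\cT,u\cdot\nabla]K$, since $\cT K=0$. Thus the non-decaying stress always enters multiplied by $\nabla u$, which is dissipated.

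The linear input comes from Propositions~\ref{thm:low-frequency-Fourier} and~\ref{prop:high-frequency}. At low frequency, $\norm{\dyad_j e^{t\cL}F}_{L^2}\simeq e^{-c_{\pm}2^{2j}t}\norm{\dyad_jF}_{L^2}$ holds two-sidedly. At high frequency there is exponential damping. Together these give the linear version of (i) and (iii) exactly as in \cite{Brandolese2016}: $\sup_t\langle t\rangle^{\alpha}\norm{U_L}_{\cH^\sigma}<\infty$ if and only if $U_0^\low\in\Bdot^{\sigma_1}_{2,\infty}$. For the lower bound I would evaluate $\norm{U_L(t)}_{\cH^\sigma}\ge 2^{\sigma j}\norm{\dyad_jU_L}$ at the block $j=j_k$ with $2^{2j_k}t\sim1$; this choice is possible because of the bounded gaps $M$. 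For the converse of (iii) I would show that two-sided decay forces $2^{\sigma_1 j}\norm{\dyad_jU_0}\gtrsim c$ along such a sequence.

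\textbf{Step 1 (direct part of (i)).} I would use a time-weighted bootstrap on
\[
\cD(t)=\sup_{s\le t}\langle s\rangle^{\alpha}\norm{U(s)}_{\cH^\sigma}.
\]
The low-frequency Duhamel term is bounded by product laws in $\Bdot^{\sigma_1}_{2,\infty}$, which need $\sigma_1>-d/2$. This gives
\[
\norm{N^\low(s)}_{\Bdot^{\sigma_1}_{2,\infty}}\lesssim\norm{(u,\tau)}_{\text{crit}}\norm{\nabla U(s)}_{\dots},
\]
and splitting $\int_0^{t/2}+\int_{t/2}^t$ together with the high-frequency Lyapunov estimate of Proposition~\ref{prop:act-Lyap} yields $\cD\lesssim C(U_0)+\cX_0\cD$. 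Smallness of $\cX_0$ then closes the bootstrap.

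\textbf{Step 2 (inverse part of (i)).} This is the step I expect to be the main obstacle. From the decay of $U$ I need to recover $U_0^\low\in\Bdot^{\sigma_1}_{2,\infty}$, working frequency by frequency. For fixed $j\le j_0$ I take $t_j=2^{-2j}$. The linear two-sided bound gives
\[
\norm{\dyad_jU_0}\lesssim\norm{\dyad_jU(t_j)}+\Bigl\|\int_0^{t_j}\dyad_je^{(t_j-s)\cL}N\Bigr\|,
\]
and the first term is at most $2^{-\sigma j}t_j^{-\alpha}\sim2^{-\sigma_1 j}$. The Duhamel piece must be bounded by $\varepsilon\sum_{j'}w_{jj'}A_{j'}$ plus decay terms, where $A_j=2^{\sigma_1j}\norm{\dyad_jU_0}$. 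This requires a localized commutator estimate for $[\cT,u\cdot\nabla]K$ that keeps the $K$ factor only at critical regularity. A weighted discrete Gronwall lemma, with weights summable thanks to the $2^{2j'}$ gains, then gives $\sup_jA_j<\infty$. One subtlety is that a priori $A_j$ may be infinite, so I would run the argument on truncations $j\ge J$ and let $J\to-\infty$.

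\textbf{Step 3 ((ii) and the nonlinear half of (iii)).} When $\sigma<\scal$, the source $N^\low(s)$ lies in $\Bdot^{\sigma_1'}_{2,\infty}$ for some $\sigma_1'$ slightly larger than what is needed. Equivalently, its dyadic tail $\sup_{j\le J}2^{\sigma_1j}\norm{\dyad_j\int_0^\infty\cdots}$ tends to $0$ as $J\to-\infty$. Splitting $\int_0^{t/2}$ into low blocks (vanishing tail, small uniformly) and high blocks (decaying faster), and treating $\int_{t/2}^t$ by the extra decay of a quadratic term, gives $\norm{U-U_L}_{\cH^\sigma}=o(t^{-\alpha})$. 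Then (iii) follows from the linear two-sided characterization, since $U$ and $U_L$ then have the same two-sided behaviour. For the converse direction, two-sided decay of $U$ transfers to $U_L$ and hence gives nondegeneracy of $U_0^\low$ via the linear argument above.
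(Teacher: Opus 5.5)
Your overall plan follows the paper's architecture, and Step~2 is essentially Lemma~\ref{lem:same-order}: recovery at $t_j=2^{-2j}$, closed by a weighted discrete Gronwall lemma whose a priori finiteness comes from the critical $\ell^1$ bound. The genuine gap is in Step~3, which is exactly where the paper's new idea sits.

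You justify the vanishing dyadic tail of the source by claiming it lies in $\Bdot^{\sigma_1'}_{2,\infty}$ for a shifted index, ``equivalently''. If $\sigma_1'>\sigma_1$, this is weaker at low frequencies and says nothing about the tail at $\sigma_1$. If $\sigma_1'<\sigma_1$, it would give a tail with a power rate, but no such gain follows from the available bounds: the passive stress is controlled only through its critical norm. For the paraproduct part of $\cT Q(\tau,\nabla u)$ one only gets
\[
2^{j\sigma_1}\norm{\dyad_j(\dot T_\tau\nabla u)}_{L^1_tL^2_x}
\lesssim\Bigl(\sum_{m<j}2^{m-j}c_m\Bigr)\sup_{k}2^{k(\sigma_1+2)}\norm{\dyad_ku}_{L^1_tL^2_x},
\qquad
c_m:=2^{m\scal}\norm{\dyad_m\tau}_{L^\infty_tL^2_x}.
\]
Here $(c_m)$ is merely a small $\ell^1$ sequence, since the passive part $K\in\cK$ is arbitrary, and the last factor is of order one for sharp data. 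So the tail tends to zero, but at no rate; note also that $\sigma<\scal$ plays no role here.

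For the same reason, treating $\int_{t/2}^t$ ``by the extra decay of a quadratic term'' fails. One factor does not decay, and at the parabolic frequency $2^{2j}\simeq t^{-1}$ the window $[t/2,t]$ is a full diffusion time. The best bound for its contribution to $\norm{R^\low(t)}_{\Bdot^\sigma_{2,1}}$ is therefore $t^{-\alpha}$ times the same tail, with no extra power of $t$: the smallness is spatial, not temporal.

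The missing argument is Lemma~\ref{lem:c0-source}:
\begin{itemize}
\item Truncate the critical factor to $\tau^{(N)}=\sum_{|k|\le N}\dyad_k\tau$. By the support properties of Bony's decomposition, only comparable-frequency pairs reach $j\le -N-J_*$, giving $2^{j\sigma_1}\norm{\dyad_j\cT Q(\tau^{(N)},\nabla u)}_{L^1_tL^2_x}\lesssim 2^{(j+N+C_0)(\sigma_1+d/2)}$.
\item Then $\tau-\tau^{(N)}\to0$ in the critical $\ell^1$ norm. Together with the bilinear bound and the closedness of $c_0$ in $\ell^\infty$, this gives $b_j\to0$.
\item Since $2^{j\sigma_1}\norm{\dyad_jR(t)}_2\le Cb_j$ uniformly in $t$, a split in frequency only (no time splitting) around $2^j\simeq t^{-1/2}$ finishes the proof. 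The top zone uses the critical-level decay of $R^\low$ and the gap $\scal-\sigma>0$.
\end{itemize}

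Step~1 has a related borderline problem. The product law gives $\norm{\cN^\low(s)}_{\Bdot^{\sigma_1}_{2,\infty}}\lesssim\cA(s)\norm{u(s)}_{\Bdot^{\sigma_1+2}_{2,\infty}}$. This last norm decays only like $s^{-1}$, even for the linear flow, and your bootstrap quantity controls it at best with rate $\braket{s}^{-\alpha}$. Hence $\int_0^{t/2}\braket{t-s}^{-\alpha}\norm{\cN^\low(s)}_{\Bdot^{\sigma_1}_{2,\infty}}\dd s$ is only $O(t^{-\alpha}\log t)$, and $\cD\lesssim C(U_0)+\cX_0\cD$ does not follow. When both factors decay this is harmless, but here the passive stress supplies no decay.

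The paper never integrates in time after taking the supremum in $j$. Theorem~\ref{thm:negative-p} propagates $U^\low\in\CL^\infty(\Bdot^{\sigma_1}_{2,\infty})\cap\CL^1(\Bdot^{\sigma_1+2}_{2,\infty})$; this Chemin--Lerner norm is finite, unlike $L^1(\Bdot^{\sigma_1+2}_{2,\infty})$. The source is absorbed by smallness of $\tilde\cA_T$, and decay then follows by interpolation in the Lyapunov inequality \eqref{525232}.

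Finally, in the converse of (iii) you must first obtain $U_0^\low\in\Bdot^{\sigma_1}_{2,\infty}$ from the decay for $t\ge t_0$ only, before invoking (ii). Your Step~2 does this if you use only times $t_j=2^{-2j}\ge t_0$.
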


Here $\langle t\rangle=(1+t^2)^{1/2}$. Several comments are in order.

\begin{remark}\label{rem:main}
	\begin{enumerate}[label=\textup{(\alph*)},leftmargin=2.2em,itemsep=2pt]
		\item No assumption whatsoever is made on $\tau_0$ beyond the smallness \eqref{eq:X0}; in particular, the passive component $\tau_0-\cR z_0\in\cK$ is arbitrary within that class. Correspondingly, the theorem asserts nothing about the decay of $\tau$, which in general does not decay (Proposition~\ref{thm:stationary-1}). The stress tensor enters the proof only through its small critical norm, where it acts as a coefficient.
		\item The low-frequency condition \eqref{eq:negative-datum} is placed on the active datum $U_0=(u_0,\cT\tau_0)$ only. This is strictly weaker than the condition $(u_0,\tau_0)^\low\in\Bdot^{\sigma_1}_{2,\infty}$ used in the upper-bound results \cite{Wan2019,ZhaiDecay2019}, and the theorem shows that it is the optimal one.
		\item Part~\textup{(ii)} shows that the nonlinear solution is asymptotically indistinguishable, at the rate $t^{-\alpha}$, from the linear active evolution $U_L$. In the classical approach to lower bounds (Wiegner's argument for the Navier-Stokes equations) a remainder estimate of this type is an additional hypothesis or a separate input; here it is a consequence of the critical $\ell^1$ structure of the solution.
		\item Part~\textup{(i)} covers the critical level $\sigma=\scal$, where the full critical norm of the active pair is characterized. Parts~\textup{(ii)} and~\textup{(iii)} are stated for $\sigma<\scal$; see Remark~\ref{rem:critical-level} for a discussion of the endpoint.
		\item Adding a stress-relaxation term $a\tau$, $a>0$, damps the passive stress and simplifies the nonlinear estimates; we expect an analogous characterization to hold after adapting the low-frequency conditions to the slow spectral mode. An extension to the mixed critical $L^2$-$L^p$ framework of \cite{Zhai2021,ChenFeiLiuWu2026} appears feasible once the corresponding mixed-norm product estimates are available. We also expect that $\tau(t)$ converges, as $t\to\infty$, to a stationary stress in $\cK$; this will be addressed elsewhere.
	\end{enumerate}
\end{remark}

\subsection{Novelty and strategy of the proof}
The present work addresses a dissipative structure different from that of our earlier paper \cite{ChenFeiLiuWu2026}, which treated the inviscid Oldroyd-B system with stress diffusion. There, as in \cite{Brandolese2016,BrandoleseShouXuZhang2024,ShouXuZhang2025,ShiShouXu2026}, every component of the solution decays. Here the stress contains an infinite-dimensional passive component that does not decay, and this changes the problem in three ways: the decaying observable has to be identified; the non-decaying stress cannot be discarded from the nonlinear analysis, because it enters the equation for $U$ through $[\cT,u\cdot\nabla]\tau$ and $\cT Q(\tau,\nabla u)$; and the inverse problem, recovering the regularity of the data from the decay of the solution, must be solved while controlling the passive stress only in its small critical norm. The main contributions are the following.

\begin{enumerate}[label=\textup{(\roman*)},leftmargin=2.2em,itemsep=3pt]
	\item \textbf{The stationary obstruction and the active quotient.}
	We identify the infinite-dimensional kernel $\cK=\ker(\Pj\diver)$ of exact stationary stresses and construct a canonical active/passive decomposition $\tau=\cR z+K$ (Section~\ref{sec:linear-chara}). This explains why a decay characterization for $(u,\tau)$ is false in general and singles out $z=\Lamb^{-1}\Pj\diver\tau$, equivalently the active pair $U=(u,z)$, as the correct object. The linear active system is then analyzed exactly: after a constant symmetrization, its low-frequency semigroup is a heat factor times a uniformly bounded oscillatory matrix with uniformly bounded inverse (Lemma~\ref{lem:low-semigroup}), which yields two-sided heat-type dyadic bounds and a complete linear characterization (Theorems~\ref{thm:li-upp-} and~\ref{thm:linear-t-s}).
	
	\item \textbf{Active nonlinear decay without decay of the passive stress.}
	We prove a critical Lyapunov inequality for the active pair alone (Proposition~\ref{prop:act-Lyap}), in which the full stress appears only through its critical amplitude. Combined with the propagation of negative active regularity (Theorem~\ref{thm:negative-p}) and an interpolation argument in the spirit of \cite{Xin-Xu-2021}, this gives the optimal upper decay of $U$ (Theorem~\ref{thm:nonlinear-upper}) without any decay assumption on $\tau$.
	
	\item \textbf{Recovering the initial regularity from the decay.}
	The direction ``decay $\Rightarrow$ data regularity'' is the heart of a sharp characterization, and it is where the passive stress is most dangerous: the critical estimates alone do not control the low-frequency Duhamel source in the negative norm $\Bdot^{\sigma_1}_{2,\infty}$. Lemma~\ref{lem:same-order} resolves this by a frequency-resolved analysis. For each dyadic index $j$ we introduce the dissipative amplitude $X_j$ of the solution and the source amplitude $Y_j$, and derive an inequality of the form
	\[
	X_j\le C\bigl(M+\cX_0^2\bigr)+C\varepsilon_j\sum_{k<j}2^{-(j-k)\gamma_1}X_k+C\sum_{k\ge j}2^{-(k-j)\gamma_2}\varepsilon_kX_k ,
	\]
	where $(\varepsilon_j)$ is an $\ell^1$ sequence controlled by the critical norm of $(u,\tau)$ and $\gamma_1,\gamma_2>0$. The interactions with the passive stress are handled by a localized commutator estimate for $[\cT,u\cdot\nabla]$ in which the first moment of the kernel of $\cT$ absorbs the derivative falling on the stress. The inequality is closed by a weighted discrete Gronwall lemma (Lemma~\ref{key-le-10}); evaluating the solution at the diffusion times $t_j=2^{-2j}$ then yields $U_0^\ell\in\Bdot^{\sigma_1}_{2,\infty}$, together with a same-order bound for the low-frequency difference between the nonlinear and linear solutions. The argument includes the critical endpoint $\sigma=\scal$.
	
	\item \textbf{A vanishing source tail and the automatic Wiegner-type property.}
	The critical $\ell^1$ summability allows the stress to be approximated by finitely many frequency blocks. Combining this with the support properties of Bony's decomposition, we prove in Lemma~\ref{lem:c0-source} that the low-frequency source has a vanishing dyadic tail,
	\[
	2^{j\sigma_1}
	\norm{\dyad_j\cN(u,\tau)}_{L^1(0,\infty;L^2)}
	\longrightarrow 0
	\qquad (j\to-\infty).
	\]
	A three-zone decomposition around the moving parabolic scale $2^j\simeq t^{-1/2}$ then converts this into $\norm{U(t)-U_L(t)}_{\cH^\sigma}=o(t^{-\alpha})$ for $\sigma<\scal$ (Lemma~\ref{lem:auto-wiegner}). Thus the Wiegner-type remainder property is a conclusion rather than a hypothesis, and the sharp two-sided linear characterization transfers to the nonlinear active solution despite the possible non-decay of the passive stress.
\end{enumerate}

\subsection{Notation and organization}
We write $A\lesssim B$ if $A\le CB$ for a constant $C$ depending only on $d,\nu,\nu_1,\nu_2,b$, on the Besov indices involved, and on the Littlewood-Paley threshold $j_0$; $A\simeq B$ means $A\lesssim B\lesssim A$. The letters $c,C$ denote positive constants of this type whose value may change from line to line. We write $\norm{\cdot}_2$ for the $L^2(\R^d)$ norm and $\ip\cdot\cdot$ for the $L^2$ inner product. For a time-dependent function $f$ and $1\le q\le\infty$, $L^q_t L^2_x$ denotes $L^q(0,\infty;L^2(\R^d))$ unless a finite horizon $T$ is indicated.

The paper is organized as follows. Section~\ref{sec:linear-chara} constructs the active/passive decomposition and establishes the sharp decay characterization for the linear active system: low- and high-frequency estimates, the necessary and sufficient condition for upper decay, and the two-sided decay. Section~\ref{sec:kernel} proves the critical Lyapunov inequality for the active pair. Section~\ref{sec:non-a} derives the nonlinear upper decay through the propagation of negative active regularity. Section~\ref{sec:nonlinear-transfer} contains the inverse estimate, the vanishing source tail, the automatic Wiegner-type property, and the proof of \cref{thm:non-trans}. Appendix~\ref{app:para} collects the paradifferential tools, and Appendix~\ref{app:critical-hybrid-products} the critical hybrid product estimates and the discrete Gronwall lemma.


\section{Sharp decay characterization for the linear
 system}\label{sec:linear-chara}

\subsection{The stationary stress kernel}
We first make precise the kernel introduced in \eqref{eq:kernel-intro}. Define
\begin{equation*}
	\cK:=\left\{K\in\cS'_h(\R^d;\R^{d\times d}_{\mathrm{sym}}):\Pj\diver K=0\right\},
\end{equation*}
where $\cS'_h$ is the space of homogeneous tempered distributions recalled in Appendix~\ref{app:para}. Since $\Pj f=0$ if and only if $f$ is a gradient, any $K\in\cK$ satisfies $\diver K=\nabla q$ for some scalar distribution $q$, modulo the usual polynomial ambiguity of homogeneous spaces.

The stress tensor itself does not decay in general. The following proposition shows that this failure is unavoidable; it forces us to split $\tau$ into an active (decaying) part and a passive (possibly non-decaying) part, and all decay statements in this paper concern the active part.
\begin{proposition} \label{thm:stationary-1}
	
	Let $K\in\cK$ be a symmetric tensor field for which the pair below is an admissible state of \eqref{eq:GOB} (for instance $K\in\Bdot^{d/2-1}_{2,1}\cap\Bdot^{d/2}_{2,1}$). Then
	\begin{equation}\label{eq:stationary-solution}
		u(t,x)\equiv0,
		\qquad
		\tau(t,x)\equiv K(x)
	\end{equation}
	is an exact solution of the OB system \eqref{eq:GOB}, with pressure $P=\nu_1q$ whenever $\diver K=\nabla q$. Consequently, if a Banach space $X$ contains a nonzero sufficiently regular element
	$K\in\cK$ whose scalar multiples are admissible initial stresses, no
	estimate of the form
	\[
	\norm{\tau(t)}_X\longrightarrow0\qquad(t\to\infty)
	\]
	can hold uniformly for all sufficiently small data in $X$.
\end{proposition}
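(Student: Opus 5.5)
The plan is a direct substitution into \eqref{eq:GOB}, followed by a uniqueness argument for the "consequently" part.

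\emph{Step 1: the pair solves the system.} Take $u\equiv0$ and $\tau\equiv K$ in \eqref{eq:GOB}. The incompressibility condition $\diver u=0$ holds trivially.

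In the stress equation, $\partial_t\tau=0$ since $K$ is time independent. Both $(u\cdot\nabla)\tau$ and $Q(\tau,\nabla u)$ vanish because they are linear in $u$ (recall $Q$ is bilinear and $D(0)=W(0)=0$). The source $\nu_2D(u)$ is zero for the same reason. So the $\tau$-equation reduces to $0=0$.

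In the velocity equation, every term involving $u$ drops out, and we are left with the requirement $\nabla P=\nu_1\diver K$.

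Since $K\in\cK$, we have $\Pj\diver K=0$. By the definition $\Pj=\Id+\nabla(-\Delta)^{-1}\diver$, this gives
\[
\diver K=-\nabla(-\Delta)^{-1}\diver\diver K=\nabla q,\qquad q:=-(-\Delta)^{-1}\diver\diver K .
\]
For $K\in\Bdot^{d/2-1}_{2,1}\cap\Bdot^{d/2}_{2,1}$ the scalar $q$ is a well-defined element of $\cS'_h$, because $(-\Delta)^{-1}\diver\diver$ is a Fourier multiplier of order zero. Choosing $P=\nu_1 q$ then closes the velocity equation. The initial data are $(0,K)$, symmetric and divergence-free, so they are admissible.

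\emph{Step 2: the non-decay statement.} Fix a nonzero admissible $K\in\cK$ with $\|K\|_X<\infty$. For small $\lambda>0$, the data $(0,\lambda K)$ are admissible and small; in particular $\cX_0\le\varepsilon_0$ when $X$ is the critical space of \cref{thm:known-global}.

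The element $\lambda K$ again lies in $\cK$, because $\cK$ is a linear space. By Step 1 together with the uniqueness in \cref{thm:known-global}, the solution issued from $(0,\lambda K)$ is exactly $(0,\lambda K)$. Hence $\|\tau(t)\|_X=\lambda\|K\|_X>0$ for all $t$, which is not $o(1)$. This rules out any decay estimate of the form $\norm{\tau(t)}_X\to0$ holding uniformly over small data.

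\emph{Main obstacle.} The only delicate point is the regularity bookkeeping. First, $q$ must be a genuine element of $\cS'_h$, so that the polynomial ambiguity in $\diver K=\nabla q$ does not matter. Second, the stationary pair must lie in the class where uniqueness holds, so that it is \emph{the} solution rather than merely a solution. For $K\in\Bdot^{d/2-1}_{2,1}\cap\Bdot^{d/2}_{2,1}$, the quantity $\cX(t)$ of the constant pair reduces to the finite stationary norms of $K$. Its dissipative terms vanish, since $u=0$ and $z=\cT K=0$. So the pair lies in the uniqueness class of \cref{thm:known-global}.
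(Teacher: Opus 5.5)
Your proposal is correct and follows the paper's proof: substitute $(0,K)$ into \eqref{eq:GOB} with $P=\nu_1 q$, then test the putative decay estimate on this stationary solution to force $\norm{K}_X=0$. Your explicit choice $q=-(-\Delta)^{-1}\diver\diver K$ and your appeal to the uniqueness in \cref{thm:known-global} (so that the stationary pair is \emph{the} solution issued from $(0,\lambda K)$) are harmless refinements that the paper leaves implicit.
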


\begin{proof}
	For $u\equiv0$, one has
	\[
	(u\cdot\nabla)u=0,
	\qquad
	D(u)=0,
	\qquad
	Q(\tau,\nabla u)=Q(K,0)=0.
	\]
	
	Hence the stress equation reduces to $\partial_t\tau=0$, which holds for $\tau\equiv K$. The momentum equation becomes
	\[
	\nabla P=\nu_1\diver K=\nu_1\nabla q,
	\]
	which is satisfied by $P=\nu_1q$. The divergence constraint is automatic. If a universal stress-decay estimate were valid, applying it to \eqref{eq:stationary-solution} would force $\norm K_X=0$ for any $K\in X\cap\cK$, which contradicts the existence of a nonzero such $K$.
\end{proof}
We next provide a concrete example demonstrating that 
$\cK$ is nonempty and, indeed, infinite-dimensional.
\begin{example}\label{ex:Hessian-kernel}
	For any nonzero $\phi\in\cS(\R^d)$,
	\[
	K=\nabla^2\phi
	\]
	is symmetric and satisfies
	\[
	\diver K=\nabla\Delta\phi,
	\qquad
	\Pj\diver K=0.
	\]
	
	Thus $\cK$ is infinite-dimensional and contains arbitrarily small smooth rapidly decaying tensors.
	
\end{example}

\subsection{A canonical active/passive decomposition}

 For a divergence-free vector field $z$, define the symmetric order-zero tensor operator
\begin{equation*}\label{eq:R-operator}
	\cR z:=-\Lamb^{-1}\bigl(\nabla z+(\nabla z)^\top\bigr).
\end{equation*}

\begin{lemma}\label{lem:right-inverse}
	If $\diver z=0$, then
	\begin{equation}\label{eq:right-inverse}
		\diver(\cR z)=\Lamb z.
	\end{equation}
	Moreover, for any $s\in\R$ and $1\le r\le\infty$,
	\begin{equation}\label{eq:R-bounded}
		\norm{\cR z}_{\Bdot^s_{2,r}}\le C\norm z_{\Bdot^s_{2,r}}.
	\end{equation}
\end{lemma}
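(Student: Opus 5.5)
I will verify \eqref{eq:right-inverse} by a direct componentwise computation using Fourier multipliers, and obtain \eqref{eq:R-bounded} from the fact that $\cR$ is a Fourier multiplier of order zero.

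\textbf{The identity \eqref{eq:right-inverse}.} Since $\Lamb^{-1}$ commutes with derivatives on $\cS'_h$, we have
\[
[\diver(\cR z)]^i=\partial_j(\cR z)^{ij}=-\Lamb^{-1}\bigl(\partial_j\partial_j z^i+\partial_j\partial_i z^j\bigr)
=-\Lamb^{-1}\bigl(\Delta z^i+\partial_i\diver z\bigr).
\]
The hypothesis $\diver z=0$ removes the second term. Since $-\Delta=\Lamb^2$, what remains is $\Lamb^{-1}\Lamb^2 z^i=\Lamb z^i$. To make this rigorous in $\cS'_h$, I will rather check it on the Fourier side for $\xi\neq0$. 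The symbol of $(\cR z)^{ij}$ is $-\ii|\xi|^{-1}(\xi_j\hat z^i+\xi_i\hat z^j)$, and applying $\ii\xi_j$ and summing over $j$ gives $|\xi|^{-1}(|\xi|^2\hat z^i+\xi_i\,\xi\cdot\hat z)=|\xi|\hat z^i$, because $\xi\cdot\hat z=0$. (This identity is also the reason the sign convention is chosen with the minus in front of $\cR$.)

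\textbf{The bound \eqref{eq:R-bounded}.} Each entry of $\cR$ is a combination of the Riesz transforms $-\ii\xi_j/|\xi|$. These are smooth homogeneous multipliers of degree $0$ on $\R^d\setminus\{0\}$, and in particular bounded in absolute value by $1$. Hence, for every $j$,
\[
\norm{\dyad_j(\cR z)^{ik}}_{L^2}\le \norm{\dyad_j z^i}_{L^2}+\norm{\dyad_j z^k}_{L^2}
\]
by Plancherel, since $\dyad_j$ commutes with Fourier multipliers. Multiplying by $2^{js}$ and taking the $\ell^r$ norm in $j$ gives \eqref{eq:R-bounded} with $C$ depending only on $d$.

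There is no genuine obstacle here. The only point requiring care is that $\cR z$ is well defined in $\cS'_h$ when $z\in\Bdot^s_{2,r}$; the frequency-by-frequency argument handles this directly, since the multiplier is bounded away from the origin on each dyadic annulus and the Besov norm controls convergence of the Littlewood–Paley series.
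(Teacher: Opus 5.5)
Your proposal is correct and follows the paper's proof. The identity \eqref{eq:right-inverse} comes from the same componentwise computation using $\diver z=0$, with your Fourier-side check as a harmless extra. The bound comes from the fact that the entries of $\cR$ are Riesz transforms, which you make explicit via Plancherel on each dyadic block. One caveat on your closing aside: the Besov norm guarantees convergence of $\sum_j\cR\dyad_j z$ in $\cS'$ only for $s<d/2$ (or $s=d/2$, $r=1$), but the blockwise inequality you prove does not need this, and the paper does not address the point either.
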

\begin{proof}
	Since $\diver z=0$, one has
	\begin{align*}
		\bigl(\diver(\cR z)\bigr)_i
		&=-\Lamb^{-1}\sum_j\partial_j(\partial_i z_j+\partial_jz_i)\\
		&=-\Lamb^{-1}\bigl(\partial_i\diver z+\Delta z_i\bigr)
		=-\Lamb^{-1}\Delta z_i
		=\Lamb z_i.
	\end{align*}
	
	The bound \eqref{eq:R-bounded} follows because every component of $\cR$ is a finite linear combination of Riesz transforms.
\end{proof}
\begin{lemma}\label{prop:quotient-decomp}
	For every symmetric stress tensor $\tau$, recall from \eqref{active-z} that $z=\Lamb^{-1}\Pj\diver\tau$ and let
	\begin{equation*} 
		K=\tau-\cR z,
	\end{equation*}
	then $K\in\cK$. 
	The decomposition of $\tau$ is unique modulo $\cK$. In particular,
	\[
	\tau_1-\tau_2\in\cK
	\quad\Longleftrightarrow\quad
	\Lamb^{-1}\Pj\diver\tau_1=
	\Lamb^{-1}\Pj\diver\tau_2.
	\]
\end{lemma}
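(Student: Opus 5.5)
The plan is to verify directly that $\Pj\diver K=0$, using Lemma~\ref{lem:right-inverse}, and then to read off the uniqueness statement from the linearity of $\cT=\Lamb^{-1}\Pj\diver$.

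\emph{Step 1: $z$ is divergence-free.} Set $z=\Lamb^{-1}\Pj\diver\tau$. Since $\Pj$ takes values in divergence-free fields and commutes with the Fourier multiplier $\Lamb^{-1}$, we get $\diver z=0$. Lemma~\ref{lem:right-inverse} therefore applies, and it gives $\diver(\cR z)=\Lamb z=\Pj\diver\tau$. All multipliers here are homogeneous of order $0$ or $-1$, so the identities make sense in $\cS'_h$, where $\Lamb\Lamb^{-1}=\Id$.

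\emph{Step 2: $K\in\cK$.} Applying $\Pj$ to the identity from Step 1 and using $\Pj^2=\Pj$, we obtain
\[
\Pj\diver K=\Pj\diver\tau-\Pj\Pj\diver\tau=0 .
\]
The tensor $K$ is symmetric because both $\tau$ and $\cR z$ are symmetric. Hence $K\in\cK$, and $\tau=\cR z+K$.

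\emph{Step 3: uniqueness modulo $\cK$.} The operator $\cT=\Lamb^{-1}\Pj\diver$ is linear. Its kernel on symmetric tensors in $\cS'_h$ is exactly $\cK$: indeed, $\Lamb^{-1}$ is injective on $\cS'_h$, so $\cT w=0$ if and only if $\Pj\diver w=0$. Applying this to $w=\tau_1-\tau_2$ gives the stated equivalence.

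For the uniqueness of the decomposition, suppose $\tau=\cR z'+K'$ with $\diver z'=0$ and $K'\in\cK$. Then Lemma~\ref{lem:right-inverse} gives $\cT\tau=\Lamb^{-1}\Pj\Lamb z'=z'$, because $\Pj z'=z'$. So $z'=z$, and consequently $K'=K$.

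The only delicate point is the homogeneous-distribution bookkeeping: we must check that $\Lamb^{-1}$ is well defined and injective on the relevant elements of $\cS'_h$. I would handle this by working with Fourier multipliers away from the origin, as the framework of Appendix~\ref{app:para} allows.
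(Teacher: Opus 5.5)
Your proof is correct and follows the paper's argument: both use Lemma~\ref{lem:right-inverse} to get $\Pj\diver(\cR z)=\Lamb z=\Pj\diver\tau$, and then conclude $\Pj\diver K=0$. You also spell out points the paper treats as immediate from the definition: that $\diver z=0$ (so the lemma applies), that $K$ is symmetric, that $\Lamb^{-1}$ is injective on $\cS'_h$, and that $z'=\cT\tau$ for any decomposition $\tau=\cR z'+K'$, which gives uniqueness.
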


\begin{proof}
	By Lemma \ref{lem:right-inverse}, we have
	\[
	\Pj\diver K
	=\Pj\diver\tau-\Pj\diver(\cR z)
	=\Lamb z-\Lamb z=0.
	\]
	Hence $K\in\cK$. The remaining assertions follow immediately from the definition.
	
\end{proof}

\begin{remark}
	For any Besov index $(s,r)$, one may have
	\begin{align*}
		\|\cR z\|_{{\Bdot^s_{2,r}}}\approx\|z\|_{{\Bdot^s_{2,r}}}.
	\end{align*}
	Indeed, by
	\eqref{eq:R-bounded}, $\norm{\cR z}_{\Bdot^s_{2,r}}\le C\norm z_{\Bdot^s_{2,r}}$.
	Conversely, $\cT\cR z=z$ by \eqref{eq:right-inverse}, and the order-zero
	boundedness of $\cT$ gives $\norm z_{\Bdot^s_{2,r}}\le C\norm{\cR z}_{\Bdot^s_{2,r}}$.
\end{remark}

\subsection{Linear part of OB system}
For system \eqref{eq:GOB}, the linear part can be written as
\begin{equation}\label{eq:linear-1}
	\begin{cases}
		\partial_tu-\nu\Delta u+\nabla P=\nu_1\diver\tau,\\
		\partial_t\tau=\nu_2D(u),\\
		\diver u=0.
	\end{cases}
\end{equation}

Applying $\Pj$ to the first equation of \eqref{eq:linear-1} and using $\Pj\diver\tau=\Lamb z$ gives
\begin{equation*} 
	\partial_tu+\nu\Lamb^2u-\nu_1\Lamb z=0.
\end{equation*}
Since $\operatorname{div}D(u)=\frac12\Delta u$ for divergence-free $u$,
applying $\Lambda^{-1}\mathbb P\operatorname{div}$ to the stress equation yields
\begin{equation*} 
	\partial_tz+\frac{\nu_2}{2}\Lamb u=0.
\end{equation*}
Thus, the system \eqref{eq:linear-1} becomes
\begin{equation}\label{eq:l-1}
	\begin{cases}
	\partial_tu+\nu\Lamb^2u-\nu_1\Lamb z=0,\\
		\partial_tz+\frac{\nu_2}{2}\Lamb u=0,\\
		\diver u=0.
	\end{cases}
\end{equation}

Set
\begin{equation}\label{eq:cstar-Y}
	c_*:=\sqrt{\frac{\nu_1\nu_2}{2}},
	\qquad
	Y_1:=\sqrt{\frac{\nu_2}{2}}\,u,
	\qquad
	Y_2:=\sqrt{\nu_1}\,z,
	\qquad
	Y=(Y_1,Y_2),
\end{equation}
then
\begin{equation*}
	C^{-1}\abs U\le\abs Y\le C\abs U.
\end{equation*}
Taking the Fourier transform of $Y$ and writing $r=|\xi|$, one obtains
\begin{equation*} 
	\partial_t\widehat Y=A(r)\widehat Y,
	\qquad
	A(r)=
	\begin{pmatrix}
		-\nu r^2&c_*r\\
		-c_*r&0
	\end{pmatrix}.
\end{equation*}
In the range $0<r<2c_*/\nu$, the characteristic roots are
\begin{equation*} 
	\lambda_\pm(r)=-\frac{\nu r^2}{2}\pm\ii b(r), \quad\text{with} \quad b(r):=r\sqrt{c_*^2-\frac{\nu^2r^2}{4}}.
\end{equation*}


We now give the exact  semigroup for the  low-frequency of the linear system \eqref{eq:l-1} in the following Lemma. 
\begin{lemma}\label{lem:low-semigroup}
	
	For $0<r<2c_*/\nu$, one has
	\begin{equation}\label{eq:low-semigroup}
		e^{tA(r)}
		=\ee^{-\nu r^2t/2}
		\left[
		\cos(b(r)t)I+
		\frac{\sin(b(r)t)}{b(r)}B(r)
		\right],
	\end{equation}
where 
\begin{equation*} 
		B(r) 
		=
		\begin{pmatrix}
			-\nu r^2/2&c_*r\\
			-c_*r&\nu r^2/2
		\end{pmatrix}.
	\end{equation*}
\end{lemma}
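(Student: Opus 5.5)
The plan is to split $A(r)$ into a scalar part and a traceless part and exponentiate each separately. Write
\[
A(r)=-\frac{\nu r^2}{2}I+B(r),
\]
which is consistent with the definition of $B(r)$: the diagonal entries $-\nu r^2/2$ and $\nu r^2/2$ of $B(r)$, shifted by $-\nu r^2/2$, give $-\nu r^2$ and $0$. The off-diagonal entries agree directly. Since $-\frac{\nu r^2}{2}I$ commutes with $B(r)$, we get $e^{tA(r)}=\ee^{-\nu r^2t/2}e^{tB(r)}$, and the lemma reduces to computing $e^{tB(r)}$.

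The key step is to apply Cayley--Hamilton to $B(r)$. It is traceless with determinant $-\nu^2r^4/4+c_*^2r^2=b(r)^2$. Hence
\[
B(r)^2=-b(r)^2I,
\]
and by induction $B^{2k}=(-1)^kb^{2k}I$ and $B^{2k+1}=(-1)^kb^{2k}B$. Substituting into the exponential series and separating even and odd terms gives
\[
e^{tB}=\cos(bt)\,I+\frac{\sin(bt)}{b}\,B .
\]
This is legitimate because $b(r)>0$ for $0<r<2c_*/\nu$, so division by $b$ is allowed.

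As an independent check, I would verify that the right-hand side of \eqref{eq:low-semigroup} equals $I$ at $t=0$ and satisfies $\frac{d}{dt}M=AM$; ODE uniqueness then confirms the formula. Differentiating $\cos(bt)I+\frac{\sin(bt)}{b}B$ gives $-b\sin(bt)I+\cos(bt)B$. This equals $B\bigl(\cos(bt)I+\frac{\sin(bt)}{b}B\bigr)$, again by $B^2=-b^2I$.

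There is no real obstacle here. The only points needing care are the sign bookkeeping in $\det B$ and the restriction $r<2c_*/\nu$, which guarantees $b(r)$ is real and positive.
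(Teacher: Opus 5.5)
Your proposal is correct and follows essentially the same argument as the paper. Both write $A(r)=-\tfrac{\nu r^2}{2}I+B(r)$, use $B(r)^2=-b(r)^2I$, and split the exponential series into even and odd parts. Your Cayley--Hamilton justification of $B^2=-b^2I$ and the ODE-uniqueness cross-check are harmless additions to the same route.
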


\begin{proof}
	Noting  $A(r)=-\nu r^2I/2+B(r)$
	and $B(r)^2=-b(r)^2I$, the even and odd parts of the exponential series give
	\begin{align*}
		\ee^{tB}
		&=\sum_{k=0}^\infty\frac{t^{2k}B^{2k}}{(2k)!}
		+\sum_{k=0}^\infty\frac{t^{2k+1}B^{2k+1}}{(2k+1)!}\\
		&=\sum_{k=0}^\infty\frac{(-1)^k(bt)^{2k}}{(2k)!}I
		+\frac{B}{b}\sum_{k=0}^\infty\frac{(-1)^k(bt)^{2k+1}}{(2k+1)!}\\
		&=\cos(bt)I+\frac{\sin(bt)}bB.
	\end{align*}
	This proves \eqref{eq:low-semigroup}.
\end{proof}

\subsection{{Low-frequency estimates}}
The following proposition establishes uniform two-sided bounds for the low-frequency of linear system \eqref{eq:linear-1}.

\begin{proposition}\label{thm:low-frequency-Fourier}
	Let $U_L=(u_L,z_L)$ solve \eqref{eq:l-1}.
	Set
	\begin{equation*} 
		0<r_0 <\frac{2c_*}{\nu}.
	\end{equation*}
	There exists $C_0\ge1$, depending only on $\nu,\nu_1,\nu_2,r_0$, such that for all $t\ge0$ and $0<|\xi|\le r_0$,
	\begin{equation}\label{eq:two-Fourier}
		C_0^{-1}\ee^{-\nu|\xi|^2t/2}\abs{\widehat U_0(\xi)}
		\le\abs{\widehat U_L(t,\xi)}
		\le C_0\ee^{-\nu|\xi|^2t/2}\abs{\widehat U_0(\xi)}.
	\end{equation}
\end{proposition}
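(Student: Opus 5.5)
We pass to the symmetrized variable $Y$ of \eqref{eq:cstar-Y}. Since $C^{-1}\abs U\le\abs Y\le C\abs U$ pointwise, and hence also on the Fourier side, it suffices to prove \eqref{eq:two-Fourier} for $\widehat Y(t,\xi)=e^{tA(r)}\widehat Y_0(\xi)$ with $r=\abs\xi\le r_0$. By Lemma~\ref{lem:low-semigroup}, $e^{tA(r)}=\ee^{-\nu r^2t/2}M(t,r)$, where
\[
M(t,r):=\cos(b(r)t)I+\frac{\sin(b(r)t)}{b(r)}B(r).
\]
The whole statement therefore reduces to uniform bounds $\norm{M(t,r)}\le C_0$ and $\norm{M(t,r)^{-1}}\le C_0$ for all $t\ge0$ and $0<r\le r_0$. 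Indeed, the upper bound is immediate from the first, and the lower bound follows from $\abs{\widehat Y_0}\le\norm{M^{-1}}\,\ee^{\nu r^2t/2}\abs{\widehat Y(t)}$.

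\textbf{Bound on $M$.} We normalize $B$: set $\tilde B(r):=B(r)/b(r)$. Its entries are $\mp\nu r^2/(2b(r))$ and $\pm c_*r/b(r)$, and $b(r)=r\sqrt{c_*^2-\nu^2r^2/4}$. Since $r\le r_0<2c_*/\nu$, we have $\sqrt{c_*^2-\nu^2r^2/4}\ge\sqrt{c_*^2-\nu^2r_0^2/4}=:\beta_0>0$. Consequently $\abs{c_*r/b}\le c_*/\beta_0$ and $\abs{\nu r^2/(2b)}\le\nu r_0/(2\beta_0)$, so $\tilde B$ is uniformly bounded. Because $M=\cos(bt)I+\sin(bt)\tilde B$, the bound $\norm M\le 1+\norm{\tilde B}$ is uniform.

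\textbf{Bound on $M^{-1}$.} From $B^2=-b^2I$ we get $\tilde B^2=-I$. Hence
\[
(\cos\theta I+\sin\theta\tilde B)(\cos\theta I-\sin\theta\tilde B)=\cos^2\theta I-\sin^2\theta\tilde B^2=I,
\]
which gives $M(t,r)^{-1}=\cos(bt)I-\sin(bt)\tilde B$. This is bounded by the same constant. Alternatively, $M^{-1}=\ee^{\nu r^2t/2}e^{-tA(r)}=e^{-tB(r)}$, which has the same form with $t$ replaced by $-t$. Taking $C_0$ to be $1+\sup_{r\le r_0}\norm{\tilde B(r)}$, multiplied by the constants from the $U\leftrightarrow Y$ equivalence, closes the argument.

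\textbf{Remarks.} No step here is a real obstacle. The only point needing care is the uniformity as $r\to0$ and as $r\to r_0$. As $r\to0$, $b(r)\sim c_*r$, so the quotient $\sin(bt)/b$ alone is unbounded in $t$ (it behaves like $t$ for small $r$), but it is multiplied by $B=O(r)$, and this cancellation is exactly what $\tilde B$ captures. As $r$ approaches the critical value, the bound is kept away from the double root $r=2c_*/\nu$ by the choice $r_0<2c_*/\nu$. The $\xi$-dependence enters only through $r$, since $\widehat u$ and $\widehat z$ are handled componentwise after applying the Leray-projected reduction \eqref{eq:l-1}.
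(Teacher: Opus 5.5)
Your proof is correct and follows the paper's argument essentially step for step. Like the paper, you reduce to the symmetrized variable $Y$, use the factorization $e^{tA(r)}=\ee^{-\nu r^2t/2}M(t,r)$, bound $B(r)/b(r)$ uniformly via $b(r)/r\ge\sqrt{c_*^2-\nu^2r_0^2/4}>0$, and invert $M$ explicitly using $(B/b)^2=-I$; your remark on the cancellation between $\sin(bt)/b$ and $B=O(r)$ as $r\to0$ clarifies why the normalization matters.
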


\begin{proof}
	By \eqref{eq:low-semigroup}, one has
		\[
		\widehat{Y}_L(t,\xi)
		=e^{-\nu r^2t/2}M(t,r)\widehat{Y}_0(\xi),
		\]
		with
		$$M(t,r)
		=\cos(b(r)t)I+\sin(b(r)t)\frac{B(r)}{b(r)}.$$
		Since $r_0<2c_*/\nu$,
		\[
		\frac{b(r)}{r}
		\ge \sqrt{c_*^2-\frac{\nu^2r_0^2}{4}}>0,
		\qquad 0<r\le r_0,
		\]
		and hence $B(r)/b(r)$ is uniformly bounded. Moreover,
		$B(r)^2=-b(r)^2I$ gives
		\[
		M(t,r)^{-1}
		=\cos(b(r)t)I-\sin(b(r)t)\frac{B(r)}{b(r)}.
		\]
        
		Thus both $M(t,r)$ and its inverse are uniformly bounded for
		$t\ge0$ and $0<r\le r_0$, which yields
	\[
	C^{-1}\abs{\widehat Y_0}
	\le\abs{M(t,r)\widehat Y_0}
	\le C\abs{\widehat Y_0}.
	\]
		The fixed invertible change of variables \eqref{eq:cstar-Y}
		then yields \eqref{eq:two-Fourier}.
	 
\end{proof}

Choose some negative $j_0$ such that
\begin{equation}\label{eq:j0-low}
	\frac83 2^{j_0}\le r_0.
\end{equation}
Then every Fourier annulus, supporting $\dyad_j$ with $j\le j_0$, is contained in the region of Proposition \ref{thm:low-frequency-Fourier}.

\begin{corollary}\label{cor:dyadic-two-sided}
	There exist constants $c,C>0$ such that for any $j\le j_0$ and $t\ge0$,
	\begin{equation}\label{eq:d-two-s}
		c\ee^{-C2^{2j}t}\norm{\dyad_jU_0}_{L^2}
		\le\norm{\dyad_jU_L(t)}_{L^2}
		\le C\ee^{-c2^{2j}t}\norm{\dyad_jU_0}_{L^2}.
	\end{equation}
	
	More generally, for any $m\ge0$,
	\begin{equation*}
		\norm{\Lamb^m\dyad_jU_L(t)}_{L^2}
		\le C2^{jm}\ee^{-c2^{2j}t}\norm{\dyad_jU_0}_{L^2}.
	\end{equation*}
\end{corollary}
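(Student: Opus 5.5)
The plan is to reduce \eqref{eq:d-two-s} to the pointwise Fourier bounds of Proposition~\ref{thm:low-frequency-Fourier} and then apply Plancherel on each dyadic block. Fix $j\le j_0$. The Fourier support of $\dyad_j$ lies in the annulus $\{\frac34 2^j\le|\xi|\le\frac83 2^j\}$. By \eqref{eq:j0-low} this annulus is contained in $\{0<|\xi|\le r_0\}$. The linear flow \eqref{eq:l-1} is a Fourier multiplier, so it commutes with $\dyad_j$. Hence $\widehat{\dyad_jU_L}(t,\xi)=\varphi(2^{-j}\xi)\widehat U_L(t,\xi)$, and \eqref{eq:two-Fourier} applies at every $\xi$ in the support.

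On the annulus, $\frac{9}{16}2^{2j}\le|\xi|^2\le\frac{64}{9}2^{2j}$. This gives
\[
e^{-\frac{32\nu}{9}2^{2j}t}\le e^{-\nu|\xi|^2t/2}\le e^{-\frac{9\nu}{32}2^{2j}t}.
\]
Combining with \eqref{eq:two-Fourier}, multiplying by $|\varphi(2^{-j}\xi)|$, squaring and integrating yields, by Plancherel,
\[
C_0^{-1}e^{-\frac{32\nu}{9}2^{2j}t}\norm{\dyad_jU_0}_{L^2}\le\norm{\dyad_jU_L(t)}_{L^2}\le C_0e^{-\frac{9\nu}{32}2^{2j}t}\norm{\dyad_jU_0}_{L^2}.
\]
This is \eqref{eq:d-two-s} with $c=\min(C_0^{-1},\frac{9\nu}{32})$ and $C=\max(C_0,\frac{32\nu}{9})$. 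One point needs a quick check: the lower bound must use the pointwise lower estimate on all of $\supp\varphi(2^{-j}\cdot)$, and it does, because \eqref{eq:two-Fourier} holds for every $0<|\xi|\le r_0$.

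For the derivative bound, $\widehat{\Lamb^m\dyad_jU_L}=|\xi|^m\widehat{\dyad_jU_L}$ with $|\xi|^m\le(\frac83)^m2^{jm}$ on the support. The same upper computation then gives the claim with a constant depending on $m$.

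There is no real obstacle here. The only care needed is to keep track of the annulus constants and to check that the threshold choice \eqref{eq:j0-low} places every low block inside the region where $M(t,r)$ and $M(t,r)^{-1}$ are uniformly bounded.
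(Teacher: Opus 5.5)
Your proof is correct and follows the same route as the paper. The paper restricts the pointwise bound \eqref{eq:two-Fourier} to the annulus $\frac34 2^j\le|\xi|\le\frac83 2^j$, which lies in $\{0<|\xi|\le r_0\}$ by \eqref{eq:j0-low}; it then squares, integrates, applies Plancherel, and inserts $|\xi|^m\simeq 2^{jm}$ for the derivative bound. Your explicit constants, and your remark that $C$ in the derivative estimate depends on $m$, are accurate refinements of the paper's brief sketch.
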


\begin{proof}
	On the support of $\dyad_j$, one has $\frac34\,2^j\le|\xi|\le\frac83\,2^j$. Apply \eqref{eq:two-Fourier}, square, integrate, and use Plancherel's theorem. The derivative estimate follows by inserting the factor $|\xi|^m\simeq2^{jm}$.
\end{proof}

The nonlinear analysis uses the following consequence of the semigroup bound.
Let  $U=(u,z)$ solve
\begin{equation}\label{eq:forced-linear}
	\partial_tU=\cL U+ \mathcal{F},
	\qquad U(0)=U_0,
\end{equation}
where $\mathcal{F}=(F,G)^T$ is a vector function and  the active linear operator   $\cL$ is given by
\begin{align}\label{64238682}
	\cL:
	=\left(
	\begin{array}{cc}
		-\nu\Lambda^2 & \nu_1 \Lambda\\
		-\frac{\nu_2}{2}\Lambda & 0 \\
	\end{array}
	\right).
\end{align}
\begin{lemma}\label{lem:forced-low}
	Let $U$ solve \eqref{eq:forced-linear}.
	For any $s\in\R$, $\varrho\in\{1,\infty\}$ and $T>0$, it holds
	\begin{equation}\label{5625222}
		\norm{U^\low}_{\CL^\infty_T(\Bdot^s_{2,\varrho})}
		+\norm{U^\low}_{\CL^1_T(\Bdot^{s+2}_{2,\varrho})}
		\le C\left(
		\norm{U_0^\low}_{\Bdot^s_{2,\varrho}}
		+\norm{\mathcal{F}^\low}_{\CL^1_T(\Bdot^s_{2,\varrho})}
		\right),
	\end{equation}
	with $C$ independent of $T$.
\end{lemma}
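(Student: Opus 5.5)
The plan is to localize \eqref{eq:forced-linear} in frequency and work block by block with the explicit low-frequency semigroup. For $j\le j_0$ we apply $\dyad_j$ and write Duhamel's formula
\[
\dyad_jU(t)=e^{t\cL}\dyad_jU_0+\int_0^t e^{(t-s)\cL}\dyad_j\mathcal F(s)\dd s .
\]
By the choice \eqref{eq:j0-low}, the Fourier support of each such block lies in $\{0<|\xi|\le r_0\}$. Proposition~\ref{thm:low-frequency-Fourier} (equivalently Corollary~\ref{cor:dyadic-two-sided}) therefore gives the pointwise-in-frequency bound $|\widehat{e^{t\cL}f}(\xi)|\le C_0\ee^{-\nu|\xi|^2t/2}|\widehat f(\xi)|$. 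This bound is valid for any vector function $f$ and not only for solutions, because it is a bound on the matrix $e^{tA(r)}$ after the fixed change of variables \eqref{eq:cstar-Y}. Plancherel then yields
\[
\norm{\dyad_jU(t)}_{L^2}\le C\ee^{-c2^{2j}t}\norm{\dyad_jU_0}_{L^2}+C\int_0^t\ee^{-c2^{2j}(t-s)}\norm{\dyad_j\mathcal F(s)}_{L^2}\dd s .
\]

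Next we take the $L^\infty_T$ and $L^1_T$ norms of this blockwise inequality. The $L^\infty_T$ norm gives $\norm{\dyad_jU}_{L^\infty_TL^2}\le C\norm{\dyad_jU_0}_{L^2}+C\norm{\dyad_j\mathcal F}_{L^1_TL^2}$. For the $L^1_T$ norm we apply Young's convolution inequality in time, using $\int_0^\infty\ee^{-c2^{2j}t}\dd t=c^{-1}2^{-2j}$, which gives $2^{2j}\norm{\dyad_jU}_{L^1_TL^2}\le C\norm{\dyad_jU_0}_{L^2}+C\norm{\dyad_j\mathcal F}_{L^1_TL^2}$. We then multiply by $2^{js}$ and take the $\ell^\varrho$ norm over $j\le j_0$ (the low-frequency part $U^\low$, as defined in the Appendix). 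For $\varrho=1$ this is a sum and for $\varrho=\infty$ a supremum, and in either case \eqref{5625222} follows directly from the definition of the Chemin--Lerner norms $\CL^q_T(\Bdot^s_{2,\varrho})$, where the time norm is taken inside the dyadic sum. All constants depend only on $C_0$ and $c$, so they are independent of $T$ and $s$.

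The only point needing care is the treatment of the cutoff. If $U^\low$ is defined with a smooth truncation, the single boundary block $j=j_0$ (or neighbouring blocks, if the low-frequency projection involves a partial block) must still have Fourier support inside $|\xi|\le r_0$. This is exactly what the margin in \eqref{eq:j0-low} ensures, since the annulus of $\dyad_{j_0}$ lies in $|\xi|\le\frac83 2^{j_0}\le r_0$. Apart from this, the argument requires no commutation or nonlinear estimate, because $\dyad_j$ commutes with the Fourier multiplier $e^{t\cL}$. I therefore expect no genuine obstacle beyond bookkeeping.
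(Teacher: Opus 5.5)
Your proposal is correct and follows essentially the same route as the paper's proof: blockwise Duhamel with the upper dyadic semigroup bound of Corollary~\ref{cor:dyadic-two-sided}, then the $L^\infty_T$ and $L^1_T$ norms in time, then multiplication by $2^{js}$ and the $\ell^\varrho$ norm over $j\le j_0$. Your use of Young's convolution inequality in time corresponds to the paper's interchange of the order of integration, and your remark on the boundary block is a harmless extra check.
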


\begin{proof}
	Duhamel's formula and the upper estimate in \eqref{eq:d-two-s} give, for $j\le j_0$,
	\begin{equation}\label{eq:block-Duhamel}
		\norm{\dyad_jU(t)}_2
		\le C\ee^{-c2^{2j}t}\norm{\dyad_jU_0}_2
		+C\int_0^t\ee^{-c2^{2j}(t-s)}\norm{\dyad_j\mathcal{F}(s)}_2\dd s.
	\end{equation}
	Taking the supremum in $t$ yields
	\[
	\norm{\dyad_jU}_{L^\infty_TL^2}
	\le C\norm{\dyad_jU_0}_2
	+C\norm{\dyad_j\mathcal{F}}_{L^1_TL^2}.
	\]
	Integrating \eqref{eq:block-Duhamel} in $t$, using interchange the order of integration and
	\[
	\int_0^T\ee^{-c2^{2j}t}\dd t\le C2^{-2j},
	\qquad
	\int_s^T\ee^{-c2^{2j}(t-s)}\dd t\le C2^{-2j},
	\]
	gives
	\[
	2^{2j}\norm{\dyad_jU}_{L^1_TL^2}
	\le C\norm{\dyad_jU_0}_2
	+C\norm{\dyad_j\mathcal{F}}_{L^1_TL^2}.
	\]
	Multiplying by $2^{js}$ and taking the $\ell^\varrho$ norm over $j\le j_0$ completes the proof.
\end{proof}

\subsection{High-frequency estimates}

Although the algebraic decay rate is selected by the low-frequency, the correct
full norm is hybrid.  At high-frequency the velocity and the active stress do not
have the same regularity: the natural pair is
$
(u,q)~\text{with}~q:=\Lambda z=\mathbb P\operatorname{div}\tau.
$
The next proposition is the
precise estimate needed in the linear characterization.

\begin{proposition}\label{prop:high-frequency}
		Let $U_L=(u_L,z_L)$ solve \eqref{eq:l-1}.
	There exist  constants $c,C>0$, depending only on
	$\nu,\nu_1,\nu_2$ and on the dyadic partition, such that
	\begin{align}
		&\sup_{t\ge0}\ee^{ct}
		\left(
		\norm{u_L^\high(t)}_{\Bdot^{d/2-1}_{2,1}}
		+\norm{z_L^\high(t)}_{\Bdot^{d/2}_{2,1}}
		\right)
		+
		\int_0^\infty\ee^{ct}
		\left(
		\norm{u_L^\high(t)}_{\Bdot^{d/2+1}_{2,1}}
		+\norm{z_L^\high(t)}_{\Bdot^{d/2}_{2,1}}
		\right)\dd t\nonumber\\
		&\le C
		\left(
		\norm{u_0^\high}_{\Bdot^{d/2-1}_{2,1}}
		+\norm{z_0^\high}_{\Bdot^{d/2}_{2,1}}
		\right).
		\label{eq:high-Besov-critical}
	\end{align}
\end{proposition}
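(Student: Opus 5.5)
We will prove \eqref{eq:high-Besov-critical} block by block. For each high-frequency index $j\ge j_0-1$ we derive a pointwise-in-time exponential bound on $\dyad_j u_L$ and $\dyad_j q_L$, with $q_L=\Lamb z_L$, and then sum in $\ell^1$ with weight $2^{j(d/2-1)}$. Since $\norm{z}_{\Bdot^{d/2}_{2,1}}\simeq\norm{q}_{\Bdot^{d/2-1}_{2,1}}$, it is natural to work with the pair $(u,q)$, which has a common regularity index at high frequency. Applying $\Lamb$ to the $z$-equation in \eqref{eq:l-1} gives
\[
\partial_t u+\nu\Lamb^2u-\nu_1 q=0,\qquad \partial_t q+\tfrac{\nu_2}{2}\Lamb^2 u=0 .
\]
This is a symbol system with matrix $\begin{pmatrix}-\nu r^2&\nu_1\\-\frac{\nu_2}{2}r^2&0\end{pmatrix}$. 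For large $r$ its eigenvalues are $\approx-\nu r^2$ (the parabolic mode) and $\approx-\nu_1\nu_2/(2\nu)$ (the damped mode). Hence both components should decay like $e^{-ct}$ uniformly in $r\ge \frac34 2^{j_0}$.

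Rather than diagonalising, which degenerates near $r=2c_*/\nu$ where the roots collide, we use a Lyapunov functional with a cross term, in the spirit of hypocoercivity. Rescale as in \eqref{eq:cstar-Y} so that the coupling becomes skew: set $v=\sqrt{\nu_2/2}\,u$ and $w=\sqrt{\nu_1}\,q/r$ on the Fourier side; this is just $Y$. On a frequency $r$ define
\[
\mathcal E(t,\xi)=|\widehat Y_1|^2+|\widehat Y_2|^2-\frac{2\delta}{r}\,\mathrm{Re}\bigl(\widehat Y_1\overline{\widehat Y_2}\bigr)
\]
for a small $\delta>0$. Since $\partial_t|\widehat Y|^2=-2\nu r^2|\widehat Y_1|^2$, and the cross term produces $-\frac{2\delta}{r}\,c_*r|\widehat Y_2|^2$ plus errors bounded by $\delta(\nu r+c_*)|\widehat Y_1||\widehat Y_2|/1$, Young's inequality gives
\[
\frac{d}{dt}\mathcal E+c\bigl(r^2|\widehat Y_1|^2+|\widehat Y_2|^2\bigr)\le0\qquad (r\ge \tfrac34 2^{j_0}),
\]
with $\mathcal E\simeq|\widehat Y|^2$, provided $\delta$ is small relative to $c_*$, $\nu$ and $2^{j_0}$. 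This is the key step and the main obstacle: one must check that the constants can be chosen uniformly for all $r\ge\frac342^{j_0}$. The point is that the factor $1/r$ in the cross term keeps it $\lesssim|\widehat Y|^2$ and the mixed error $\lesssim\delta r|\widehat Y_1||\widehat Y_2|$ is absorbed by $\nu r^2|\widehat Y_1|^2+\delta c_*|\widehat Y_2|^2$.

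Since $r^2\ge c$ on the high frequencies, we obtain $\frac{d}{dt}\mathcal E+2c'\mathcal E\le0$. Multiplying by $e^{c't}$ and integrating in $\xi$ over the support of $\dyad_j$ (Plancherel) yields
\[
e^{c't}\norm{\dyad_jY(t)}_2+\int_0^t e^{c't}\bigl(2^{j}\norm{\dyad_jY_1}_2+\norm{\dyad_jY_2}_2\bigr)\dd s\lesssim\norm{\dyad_jY_0}_2 .
\]
To obtain this we take the square root of the integrated Lyapunov inequality. Alternatively, we apply the $L^1$-in-time bound to $\sqrt{\mathcal E}$ using the standard inequality $\frac{d}{dt}\sqrt{\mathcal E}+c(r^2|\widehat Y_1|+|\widehat Y_2|)\lesssim0$, together with the usual regularisation $\sqrt{\mathcal E+\epsilon^2}$. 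Using the same parabolic gain for $u$, i.e.\ the factor $2^{2j}$ instead of $2^j$, is obtained by choosing the dissipation term $r^2|\widehat Y_1|^2$ and retaining it in the $L^1$ estimate. Converting back, $\dyad_jY_1\sim\dyad_ju$ and $\dyad_jY_2\sim 2^{j}\dyad_j z$. Multiplying by $2^{j(d/2-1)}$ and summing over high $j$ gives exactly the $\Bdot^{d/2-1}_{2,1}$ bound for $u_L^h$, the $\Bdot^{d/2}_{2,1}$ bounds for $z_L^h$, and the $L^1$ bound for $u^h_L$ in $\Bdot^{d/2+1}_{2,1}$. Finally, we reduce $c$ so that the exponential weight in time is absorbed, which proves \eqref{eq:high-Besov-critical}.
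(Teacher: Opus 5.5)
Your computation for $\mathcal E$ is correct and uniform in $r\ge\frac34 2^{j_0}$. The terms produced by the cross term are $2\delta\nu r\,\mathrm{Re}(\widehat Y_1\overline{\widehat Y_2})$ and $2\delta c_*\abs{\widehat Y_1}^2$, and both are absorbable. The gap is that the estimate is carried out in the wrong norm. By your own definition $Y_2=\sqrt{\nu_1}\,q/r=\sqrt{\nu_1}\,z$, so $\dyad_jY_2\sim\dyad_jz$, not $2^j\dyad_jz$ as you claim at the end. Hence $\mathcal E\simeq\abs{\widehat Y_1}^2+\abs{\widehat Y_2}^2$ measures $u$ and $z$ at the \emph{same} regularity. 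The statement needs the block quantity $\abs{\widehat Y_1}+r\abs{\widehat Y_2}$, i.e.\ the pair $(u,q)$ at a common index. Multiplying your block bound by $2^j$ to reach $\norm{z_L^\high}_{\Bdot^{d/2}_{2,1}}$ puts $2^j\norm{\dyad_ju_0}_2$ on the right, which would require $u_0^\high\in\Bdot^{d/2}_{2,1}$. What must be proved is $\abs{e^{tA_h(r)}}\lesssim e^{-ct}$ uniformly in $r$, where $e^{tA_h(r)}=\operatorname{diag}(1,r)\,e^{tA(r)}\operatorname{diag}(1,r)^{-1}$. This does not follow from $\abs{e^{tA(r)}}\lesssim e^{-ct}$: it requires the $(2,1)$ entry of $e^{tA(r)}$ to be $O(e^{-ct}/r)$, which an equally weighted energy cannot detect.

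The same defect breaks the $L^1_t$ parabolic gain for $u$. Dividing $\frac{d}{dt}\mathcal E+c(r^2\abs{\widehat Y_1}^2+\abs{\widehat Y_2}^2)\le0$ by $\sqrt{\mathcal E}\simeq\abs{\widehat Y_1}+\abs{\widehat Y_2}$ yields only $\frac{d}{dt}\sqrt{\mathcal E}+c\sqrt{\mathcal E}\le0$. Moreover, the bound $\int_0^\infty r^2\abs{\widehat Y_1}\dd t\lesssim\abs{\widehat Y_0}$ that you assert is false. Take $\widehat Y_0=(0,1)$ and $r>2c_*/\nu$, and let $\lambda_\pm(r)$ be the real eigenvalues with $d(r)=\lambda_+-\lambda_-$. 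Then $\widehat Y_1(t)=\frac{c_*r}{d(r)}(e^{\lambda_+t}-e^{\lambda_-t})\ge0$, and $\int_0^\infty e^{tA(r)}\dd t=-A(r)^{-1}$ gives exactly $\int_0^\infty r^2\abs{\widehat Y_1}\dd t=r/c_*$. This is consistent with the proposition only because, in the correct norm, this datum has size $r\abs{\widehat Y_2(0)}=r$.

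To close the argument you must treat large $r$ in the variables $(u,q)$, where $A_h(r)$ is no longer skew. One option is the spectral projections of $A_h(r)$ for $r\ge r_1$, as the paper does. Another is a constant-weight functional $\abs{\widehat u}^2+b\abs{\widehat q}^2-2\kappa\,\mathrm{Re}(\widehat u\,\overline{\widehat q})$ with $b\nu_2=2\kappa\nu$, as in Proposition~\ref{6376976}; it absorbs $\nu_1\mathrm{Re}(\widehat u\,\overline{\widehat q})$ only for $r$ large. Then recover the $r^2$ gain from $\frac{d}{dt}\abs{\widehat u}+\nu r^2\abs{\widehat u}\le\nu_1\abs{\widehat q}$ and the $L^1_t$ bound on $\widehat q$. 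Your isotropic functional is still useful on the compact range $\frac342^{j_0}\le r\le r_1$, where the two norms are equivalent. There it gives uniform exponential decay straight through the collision point $r=2c_*/\nu$, a cleaner substitute for the paper's continuity-and-compactness remark on the intermediate shells.
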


\begin{proof}
	We work first with the symmetrized variables $Y$ from \eqref{eq:cstar-Y}.  Set
	\[
	V_1=Y_1,\qquad V_2=rY_2,\qquad V=(V_1,V_2)^\top.
	\]
	Since $\partial_t\widehat Y=A(r)\widehat Y$, the vector $V$ solves
	\begin{equation*} 
		\partial_t\widehat V=A_h(r)\widehat V,
		\qquad
		A_h(r)=
		\begin{pmatrix}
			-\nu r^2&c_*\\
			-c_*r^2&0
		\end{pmatrix}.
	\end{equation*}
	The  corresponding eigenvalues are $\lambda_\pm(r)=(-\nu r^2\pm\sqrt{\nu^2r^4-4c_*^2r^2})/2$ and
	\begin{equation*} 
		\lambda_-(r)=-\nu r^2+\frac{c_*^2}{\nu}+O(r^{-2}),
		\qquad
		\lambda_+(r)=-\frac{c_*^2}{\nu}+O(r^{-2})
		\qquad(r\to\infty).
	\end{equation*}
	
	Set
	\[
	d(r):=\lambda_+(r)-\lambda_-(r)
	=\sqrt{\nu^2r^4-4c_*^2r^2}.
	\]
	Choose $r_1>2c_*/\nu$ so large that, for all $r\ge r_1$,
	\begin{equation}\label{eq:high-r-coer}
		d(r)\ge \frac{\nu}{2}r^2,
		\qquad -\lambda_+(r)\ge \gamma_0,
		\qquad -\lambda_-(r)\ge \gamma_1r^2
	\end{equation}
	for some $\gamma_0,\gamma_1>0$.  
	 The semigroup representation gives  
	\[
	\ee^{tA_h(r)}=\ee^{\lambda_+(r)t}\Pi_+(r)
	+\ee^{\lambda_-(r)t}\Pi_-(r),
	\]
	where
	\begin{equation*} 
		\Pi_+(r)=\frac{A_h(r)-\lambda_-(r)I}{d(r)}
		=\frac1{d(r)}
		\begin{pmatrix}
			\lambda_+(r)&c_*\\
			-c_*r^2&-\lambda_-(r)
		\end{pmatrix},
		\qquad
		\Pi_-(r)=I-\Pi_+(r).
	\end{equation*}
		Then direct calculation yields, for $r\ge r_1$,
	\begin{align}
		\abs{\widehat V(t,\xi)}
		&\le C\left(\ee^{-\gamma_0t}+\ee^{-\gamma_1r^2t}\right)
		\abs{\widehat V_0(\xi)},
		\label{eq:V-high-p}\\
		r^2\abs{\widehat V_1(t,\xi)}
		&\le C\left(\ee^{-\gamma_0t}
		+r^2\ee^{-\gamma_1r^2t}\right)
		\abs{\widehat V_0(\xi)}.
		\label{eq:V1-high-s}
	\end{align}
	Fix $0<c<\gamma_0/2$ and then increase $r_1$ so that
	$c<\gamma_1r_1^2/2$.  Employing 
	\eqref{eq:V-high-p}-\eqref{eq:V1-high-s} implies
	\begin{align}
		&\sup_{t\ge0}\ee^{ct}\abs{\widehat V(t,\xi)}
		+\int_0^\infty\ee^{ct}
		\left(r^2\abs{\widehat V_1(t,\xi)}
		+\abs{\widehat V_2(t,\xi)}\right)\dd t
		\notag\\
		&\qquad\le C\abs{\widehat V_0(\xi)}
		\qquad(r\ge r_1).
		\label{6363446}
	\end{align}
	
	Choose $j_1$ such that every annulus supporting $\dyad_j$, $j\ge j_1$, lies in
	$\{r\ge r_1\}$.  On that annulus, $r\simeq2^j$, while the constant
	symmetrization \eqref{eq:cstar-Y} gives
	\[
	\abs V\simeq \abs u+r\abs z.
	\]
	Applying Plancherel's theorem to \eqref{6363446} and Minkowski's
	inequality for the time integral, we obtain, for all $j\ge j_1$,
	\begin{align*}
		&\sup_{t\ge0}\ee^{ct}
		\left(\norm{\dyad_j u_L(t)}_{2}+2^j\norm{\dyad_jz_L(t)}_{2}\right)
		+
		\int_0^\infty \ee^{ct}
		\left(2^{2j}\norm{\dyad_j u_L(t)}_{2}
		+2^j\norm{\dyad_jz_L(t)}_{2}\right)\dd t
		\notag\\
		&\qquad\le C\left(\norm{\dyad_j u_0}_{2}
		+2^j\norm{\dyad_jz_0}_{2}\right).
	\end{align*}
	
	Finally, we multiply the above inequality by $2^{j(d/2-1)}$ and sum over $j\ge j_1$. On the finitely many shells $j_0-1\le j<j_1$ the frequency variable ranges in a compact annulus on which both eigenvalues of $A_h(r)$ (equivalently of $A(r)$) have negative real parts and depend continuously on $r$; hence the semigroup is uniformly exponentially stable there, and these shells are absorbed into the constant. This completes the proof of Proposition \ref{prop:high-frequency}.
\end{proof}

 Let $(u,z)$ solve the system \eqref{eq:forced-linear}, then we can rewrite the \eqref{eq:forced-linear} by
\begin{equation}\label{eq:forced-p2}
	\partial_tu+\nu\Lambda^2u-\beta_1 q=F,
	\qquad
	\partial_tq+\beta_2\Lambda^2 u=G_q,
\end{equation}
with
\begin{equation*}  
 \beta_1:=\nu_1,\quad\beta_2:=\frac{\nu_2}{2},\qquad  G_q:=\Lambda G.
\end{equation*}

\begin{proposition}\label{6376976}
	Let $U=(u,z)$ solve \eqref{eq:forced-linear}, so that $(u,q)$, $q=\Lambda z$, solves \eqref{eq:forced-p2}, and
	assume
	\begin{align*}
		&u_0^\high\in\Bdot^{d/2-1}_{2,1},\qquad
		z_0^\high\in\Bdot^{d/2}_{2,1},\\
		&F^\high\in L^1(0,T;\Bdot^{d/2-1}_{2,1}),\qquad
		G^\high\in L^1(0,T;\Bdot^{d/2}_{2,1}).
	\end{align*}
	Then there exists some $C>0$  such that
	\begin{align}
		&\norm{u^\high}_{\CL^\infty_T(\Bdot^{d/2-1}_{2,1})}
		+\norm{z^\high}_{\CL^\infty_T(\Bdot^{d/2}_{2,1})}
		+\norm{u^\high}_{L^1_T(\Bdot^{d/2+1}_{2,1})}
		+\norm{z^\high}_{L^1_T(\Bdot^{d/2}_{2,1})}
		\notag\\
		&\qquad\le C\Bigl(
		\norm{u_0^\high}_{\Bdot^{d/2-1}_{2,1}}
		+\norm{z_0^\high}_{\Bdot^{d/2}_{2,1}}
		+\norm{F^\high}_{L^1_T(\Bdot^{d/2-1}_{2,1})}
		+\norm{G^\high}_{L^1_T(\Bdot^{d/2}_{2,1})}
		\Bigr).
		\label{eq:f-h-p2-h}
	\end{align}
	Equivalently, in terms of $q=\Lambda z$,
	\begin{align}
		&\norm{u^\high}_{\CL^\infty_T(\Bdot^{d/2-1}_{2,1})}
		+\norm{q^\high}_{\CL^\infty_T(\Bdot^{d/2-1}_{2,1})}
		+\norm{u^\high}_{L^1_T(\Bdot^{d/2+1}_{2,1})}
		+\norm{q^\high}_{L^1_T(\Bdot^{d/2-1}_{2,1})}
		\notag\\
		&\qquad\le C\Bigl(
		\norm{u_0^\high}_{\Bdot^{d/2-1}_{2,1}}
		+\norm{q_0^\high}_{\Bdot^{d/2-1}_{2,1}}
		+\norm{F^\high}_{L^1_T(\Bdot^{d/2-1}_{2,1})}
		+\norm{G_q^\high}_{L^1_T(\Bdot^{d/2-1}_{2,1})}
		\Bigr).
		\label{526856}
	\end{align}
	
	
\end{proposition}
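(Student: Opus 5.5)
The plan is to reduce \eqref{eq:f-h-p2-h} to the homogeneous estimate of Proposition~\ref{prop:high-frequency}, applied block by block, and to handle the source through Duhamel's formula. First observe that \eqref{eq:f-h-p2-h} and \eqref{526856} are the same inequality. On each shell, $\norm{\dyad_j q}_2\simeq 2^j\norm{\dyad_j z}_2$, so $\norm{z^\high}_{\Bdot^{d/2}_{2,1}}\simeq\norm{q^\high}_{\Bdot^{d/2-1}_{2,1}}$ and $\norm{G^\high}_{\Bdot^{d/2}_{2,1}}\simeq\norm{G_q^\high}_{\Bdot^{d/2-1}_{2,1}}$. This also holds for the Chemin--Lerner norms. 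It therefore suffices to prove \eqref{eq:f-h-p2-h}.

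Fix a high-frequency index $j\ge j_0-1$ (the high-frequency range). In the proof of Proposition~\ref{prop:high-frequency} we obtained the pointwise Fourier bound \eqref{6363446} for $r\ge r_1$. On the finitely many intermediate shells, uniform exponential stability gives the analogous bound with $r^2$ replaced by a constant comparable to $2^{2j}$. Put together, these yield, for all high $j$, all $t\ge s\ge0$ and $\xi$ in the support of $\dyad_j$,
\[
|\widehat V(t,\xi)|+\int_s^\infty \bigl(2^{2j}|\widehat V_1(t',\xi)|+|\widehat V_2(t',\xi)|\bigr)\dd t'\le C|\widehat V(s,\xi)|
\]
for every free evolution started at time $s$. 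The evolution here is $\ee^{(t-s)A_h(r)}$, which commutes with $\dyad_j$.

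Next write Duhamel's formula in the $V$ variables: $\widehat V(t)=\ee^{tA_h}\widehat V_0+\int_0^t\ee^{(t-s)A_h}\widehat H(s)\dd s$. The forcing is $H=(\sqrt{\nu_2/2}\,F,\ \sqrt{\nu_1}\,rG)$, so that $|\widehat H|\simeq|\widehat F|+r|\widehat G|$. Apply the bound above to each piece. Then use Plancherel together with Minkowski's inequality in the time variable; for the $L^1$ part, also interchange $\int_0^T\dd t$ with $\int_0^t\dd s$. This gives, for each high $j$,
\[
\norm{\dyad_j(u,\Lambda z)}_{L^\infty_TL^2}+2^{2j}\norm{\dyad_j u}_{L^1_TL^2}+2^j\norm{\dyad_j z}_{L^1_TL^2}\le C\bigl(\norm{\dyad_j(u_0,\Lambda z_0)}_2+\norm{\dyad_j F}_{L^1_TL^2}+2^j\norm{\dyad_jG}_{L^1_TL^2}\bigr).
\]
Multiplying by $2^{j(d/2-1)}$ and summing over $j$ in the high range gives \eqref{eq:f-h-p2-h}. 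The sum produces the Chemin--Lerner norms directly, and these dominate the $L^\infty_T$ and $L^1_T$ Besov norms. Note that $\ell^1$ in $j$ commuted with $L^1_T$ is just Fubini, so in that case $\CL^1_T=L^1_T$.

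The only delicate point is the threshold region between $j_0-1$ and $j_1$, where neither the diagonalization nor the low-frequency formula of Lemma~\ref{lem:low-semigroup} is used. There I would argue by compactness. On $r\in[\tfrac34 2^{j_0-1},r_1]$ both eigenvalues of $A_h(r)$ have real part at most $-\gamma<0$, possibly with a double root at $r=2c_*/\nu$. This gives $\abs{\ee^{tA_h(r)}}\le C\ee^{-\gamma t/2}$ uniformly on the compact set, with polynomial factors absorbed into the exponential. The factors $2^{2j}$ are then bounded by constants. A simpler alternative is a direct $L^2$ energy estimate with a cross term $\ip{\dyad_ju}{\dyad_jq}$ (hypocoercivity), which gives the same blockwise inequality on all high shells at once. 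I would keep this as a backup if the spectral route gets messy near the double root.
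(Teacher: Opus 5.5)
Your argument is correct in substance, but it takes a different route from the paper. The paper proves the forced estimate by a dyadic hypocoercive energy method. On shells $j\ge j_h$ it differentiates $\Theta_j^2=\norm{u_j}_2^2+b_1\norm{q_j}_2^2-2\kappa_1\ip{u_j}{q_j}$, with $b_1\beta_2=\kappa_1\nu$ chosen so that the $\ip{\Lambda^2u_j}{q_j}$ terms cancel; this exposes the damping of $q_j$. It then recovers the parabolic gain $2^{2j}\norm{u_j}_2$ from a separate $L^2$ estimate of the $u$-equation, combines the two into $H_j=\Theta_j+\eta_1\norm{u_j}_2$, and integrates the resulting differential inequality in time. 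Only the finitely many intermediate shells are treated by Duhamel plus uniform exponential stability on a compact annulus, exactly as you do (your Jordan-block remark there is fine).

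You instead get the forced estimate from the homogeneous spectral bounds of Proposition~\ref{prop:high-frequency}, via Duhamel and time-translation invariance. Since the diagonalization is already done, this is shorter. However, it depends on the explicit eigenvalue asymptotics and on the constant-coefficient structure. The energy route needs no spectral information. It is also the template reused in Step~2 of Proposition~\ref{prop:act-Lyap}, where the transport terms must stay on the left so that $\ip{u\cdot\nabla u_j}{u_j}=\ip{u\cdot\nabla q_j}{q_j}=0$ can be used. A Duhamel argument that puts transport in the source would lose a derivative on the merely damped component $q$. Your backup plan is precisely the paper's proof.

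One step must be done in the right order. For the $L^1_T$ part you need a bound of the form $\int_s^\infty 2^{2j}\norm{\dyad_j(\ee^{(t-s)A_h}W)_1}_2\dd t\le C\norm{\dyad_jW}_2$, which is an $L^1_tL^2_\xi$ bound. Your displayed estimate controls $\int\abs{\widehat V_1(t,\xi)}\dd t$ pointwise in $\xi$, which is an $L^2_\xi L^1_t$ quantity. Minkowski only gives $L^2_\xi L^1_t\le L^1_tL^2_\xi$, the wrong direction, so the $L^1_T$ bound does not follow from that display as stated.

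The repair is immediate:
\begin{itemize}
\item Use \eqref{eq:V-high-p}--\eqref{eq:V1-high-s} in multiplier form.
\item Bound the time factors uniformly on the annulus $r\simeq2^j$ by $C(\ee^{-\gamma_0t}+\ee^{-c2^{2j}t})$ and $C(\ee^{-\gamma_0t}+2^{2j}\ee^{-c2^{2j}t})$.
\item Take the $L^2_\xi$ norm at each fixed time, and only then integrate in $t$.
\end{itemize}
On the intermediate shells your operator-norm bound $\abs{\ee^{tA_h(r)}}\le C\ee^{-\gamma t/2}$ already has this form. With this ordering, the Duhamel pieces, the interchange of $\int_0^T\dd t$ with $\int_0^t\dd s$, and the final weighted summation go through as you describe.
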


\begin{proof}

 Let $u_j=\dyad_ju$ and
	$q_j=\dyad_jq$.  Choose
	\begin{equation*} 
		0<\kappa_1<\frac{\nu}{2\beta_2},
		\qquad b_1=\frac{\kappa_1\nu}{\beta_2},
	\end{equation*}
	and define
	\begin{equation}\label{526436}
		\Theta_j^2
		:=\norm{u_j}_2^2+b_1\norm{q_j}_2^2-2\kappa_1\ip{u_j}{q_j}.
	\end{equation}
	Noticing the fact $b_1>2\kappa_1^2$, one has  
	\begin{equation}\label{52643632}
		c(\norm{u_j}_2+\norm{q_j}_2)
		\le \Theta_j
		\le C(\norm{u_j}_2+\norm{q_j}_2).
	\end{equation}
	A direct differentiation of \eqref{526436}, using
	$b_1 \beta_2=\kappa_1\nu$, yields
	\begin{align}
		\frac12\frac{\dd}{\dd t}\Theta_j^2
		={}&-(\nu-\kappa_1\beta_2)\norm{\Lambda u_j}_2^2
		-\kappa_1\beta_1\norm{q_j}_2^2
		+\beta_1\ip{u_j}{q_j}
		\notag\\
		&+\ip{F_j}{u_j-\kappa_1 q_j}
		+\ip{(G_q)_j}{b_1q_j-\kappa_1 u_j}.
		\label{5264321}
	\end{align}
	By Bernstein's inequality on an annulus,
	$\norm{u_j}_2\le C2^{-j}\norm{\Lambda u_j}_2$.  Hence, for any
	$\varepsilon>0$,
	\begin{equation*} 
		\beta_1\abs{\ip{u_j}{q_j}}
		\le \varepsilon\norm{\Lambda u_j}_2^2
		+C_\varepsilon\beta_1^22^{-2j}\norm{q_j}_2^2.
	\end{equation*}
	
	Choose first
	$\varepsilon=(\nu-\kappa_1\beta_2)/4$ and then $j_h$ so large that
	$C_\varepsilon\beta_1^22^{-2j_h}\le\kappa_1\beta_1/4$.  For all
	$j\ge j_h$, \eqref{5264321} therefore gives, 
	\begin{equation}\label{525666}
		\frac{\dd}{\dd t}\Theta_j
		+c\bigl(\norm{u_j}_2+\norm{q_j}_2\bigr)
		\le C\bigl(\norm{F_j}_2+\norm{(G_q)_j}_2\bigr).
	\end{equation}
	
	The $2^{2j}$ smoothing of $u_j$ is not contained in
	\eqref{525666} and must be recovered separately.  Taking the
	$L^2$ scalar product of the first equation {in \eqref{eq:forced-p2}} with
	$u_j$, applying Bernstein and Young inequalities gives
	\begin{equation}\label{636885}
		\frac{\dd}{\dd t}\norm{u_j}_2
		+c2^{2j}\norm{u_j}_2
		\le C\norm{q_j}_2+C\norm{F_j}_2.
	\end{equation}
	
	Choose $0<\eta_1\ll1$ so that the $q_j$ term generated by
	$\eta_1$ times \eqref{636885} is absorbed by
	\eqref{525666}, and set
	\begin{equation*}
		H_j:=\Theta_j+\eta_1\norm{u_j}_2.
	\end{equation*}
	Then
	\begin{equation}\label{638444}
		\frac{\dd}{\dd t}H_j
		+c\left(2^{2j}\norm{u_j}_2+\norm{q_j}_2\right)
		\le C\left(\norm{F_j}_2+\norm{(G_q)_j}_2\right),
		\qquad j\ge j_h,
	\end{equation}
	and $H_j\simeq\norm{u_j}_2+\norm{q_j}_2$.
	
	It remains to treat the finitely many intermediate shells $j_0-1\le j<j_h$. On these shells $2^{2j}$ is bounded above and below by positive constants, so that $2^{2j}\norm{u_j}_2\simeq\norm{u_j}_2$, and the required damped block inequality follows from Duhamel's formula together with the uniform exponential stability of the semigroup on compact annuli noted at the end of the proof of Proposition~\ref{prop:high-frequency}. Thus \eqref{638444} is valid for all $j\geq j_0-1$ (with $H_j$ replaced by a suitable Lyapunov functional equivalent to the  $\norm{u_j}_2+\norm{q_j}_2$ on the intermediate shells). Integrating the block inequality on $[0,T]$, multiplying by
	$2^{j(d/2-1)}$, and summing over $j\ge j_0-1$, we obtain
	 \eqref{526856}. 
	  Finally, on all high annulus, we have
	\begin{equation*} 
		2^{j(d/2-1)}\norm{q_j}_2
		\simeq 2^{jd/2}\norm{z_j}_2,
		\qquad
		2^{j(d/2-1)}\norm{(G_q)_j}_2
		\simeq 2^{jd/2}\norm{G_j}_2.
	\end{equation*}
	This proves \eqref{eq:f-h-p2-h} and thus the proof of Proposition \ref{6376976} is completed.
\end{proof}
\subsection{Necessary and sufficient condition for upper decay}

The following theorem characterizes the upper decay of the linear active system \eqref{eq:l-1}.
\begin{theorem}\label{thm:li-upp-}
	Let
	\begin{equation}\label{eq:linear-r-p2}
		\sigma_1<\sigma\le \frac d2-1,
	\end{equation}
	and $U_L=(u_L,z_L)$ solve \eqref{eq:l-1}. Assume
	\begin{equation}\label{eq:li-upp-b}
		{U_0^\low\in\Bdot^{\sigma}_{2,1},}
		\qquad
		u_0^\high\in\Bdot^{d/2-1}_{2,1},
		\qquad
		z_0^\high\in\Bdot^{d/2}_{2,1}.
	\end{equation}
	Then the following statements are equivalent:
	\begin{enumerate}[label=\textup{(\roman*)}]
		\item {$U_0^\low\in\Bdot^{\sigma_1}_{2,\infty}$;}
		\item there exists $C>0$ such that the full hybrid estimate  holds
		\begin{equation}\label{eq:linear-upp-h}
			\norm{U_L(t)}_{\cH^{\sigma}}
			\le C\braket t^{-\frac12(\sigma-\sigma_1)},
			\qquad t\ge0;
		\end{equation}
	\item  there exists $C>0$ such that the low-frequency component satisfies
	\begin{equation}\label{eq:linear-upp-r}
		\norm{U_L^\low(t)}_{{\Bdot^{\sigma}_{2,1}}}
		\le C\braket t^{-\frac12(\sigma-\sigma_1)},	\qquad t\ge0.
	\end{equation}
	\end{enumerate}
\end{theorem}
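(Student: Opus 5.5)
I would prove the cycle (i)$\Rightarrow$(ii)$\Rightarrow$(iii)$\Rightarrow$(i). The linear flow does not mix Littlewood--Paley blocks, so everything reduces to the dyadic two-sided bounds of Corollary~\ref{cor:dyadic-two-sided} at low frequencies and the exponential damping of Proposition~\ref{prop:high-frequency} at high frequencies.

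\emph{(i)$\Rightarrow$(ii).} Split $\norm{U_L(t)}_{\cH^\sigma}$ into its low- and high-frequency parts. For the low part, the upper bound in \eqref{eq:d-two-s} gives
\[
2^{j\sigma}\norm{\dyad_jU_L(t)}_2\le C\,2^{j(\sigma-\sigma_1)}\ee^{-c2^{2j}t}\,\norm{U_0^\low}_{\Bdot^{\sigma_1}_{2,\infty}},\qquad j\le j_0 .
\]
Since $\sigma-\sigma_1>0$, summing over $j\le j_0$ gives
\[
\sum_{j\le j_0}2^{j(\sigma-\sigma_1)}\ee^{-c2^{2j}t}\lesssim \min\{1,\,t^{-(\sigma-\sigma_1)/2}\}\lesssim\braket t^{-\alpha}.
\]
This is the standard comparison with $\int_0^\infty \lambda^{\sigma-\sigma_1-1}\ee^{-c\lambda^2t}\dd\lambda$. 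For $t\le1$ one may instead use $\norm{U_0^\low}_{\Bdot^\sigma_{2,1}}$ from \eqref{eq:li-upp-b}.

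The high part is bounded by $C\ee^{-ct}$ times the high-frequency data norms, by \eqref{eq:high-Besov-critical}, and $\ee^{-ct}\lesssim\braket t^{-\alpha}$. One technical point needs care: the low/high threshold splits with a possible overlap shell near $j_0$. On that finitely many shells both the low estimate and the exponential estimate are valid, so the overlap is harmless.

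\emph{(ii)$\Rightarrow$(iii).} This is trivial, since $\norm{U_L^\low(t)}_{\Bdot^\sigma_{2,1}}\le\norm{U_L(t)}_{\cH^\sigma}$ by Definition~\ref{def:hybrid-a-1}.

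\emph{(iii)$\Rightarrow$(i).} This is the only direction using the lower bound in \eqref{eq:d-two-s}. Fix $j\le j_0$ and evaluate at the diffusion time $t_j:=2^{-2j}$, which satisfies $t_j\ge1$ because $j_0<0$. The lower bound gives
\[
\norm{\dyad_jU_0}_2\le c^{-1}\ee^{C}\norm{\dyad_jU_L(t_j)}_2 .
\]
Since a single block is dominated by the $\ell^1$ sum, the right side is at most
\[
C\,2^{-j\sigma}\norm{U_L^\low(t_j)}_{\Bdot^\sigma_{2,1}}\le C\,2^{-j\sigma}\braket{t_j}^{-\alpha}\le C\,2^{-j\sigma}\,2^{2j\alpha}=C\,2^{-j\sigma_1}.
\]
The last identity holds because $2\alpha=\sigma-\sigma_1$. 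Multiplying by $2^{j\sigma_1}$ and taking the supremum over $j\le j_0$ yields $U_0^\low\in\Bdot^{\sigma_1}_{2,\infty}$.

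If the low-frequency truncation is defined with a smooth cutoff, so that $\dyad_j U^\low\ne\dyad_jU$ for $j$ near $j_0$, only finitely many blocks are affected. Those blocks are controlled by the assumption $U_0^\low\in\Bdot^\sigma_{2,1}$.

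I expect no real obstacle here. The delicate points are only bookkeeping: the uniform constant in $\braket t^{-\alpha}$ across the small-time regime, and the boundary shells of the hybrid splitting. The substantive input, the uniform invertibility of the oscillatory factor $M(t,r)$, is already encoded in Proposition~\ref{thm:low-frequency-Fourier}.
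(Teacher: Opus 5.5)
Your proposal is correct and follows essentially the same route as the paper. You sum the upper dyadic bound of Corollary~\ref{cor:dyadic-two-sided} against $2^{j(\sigma-\sigma_1)}\ee^{-c2^{2j}t}$ to get the decay, use the exponential high-frequency damping \eqref{eq:high-Besov-critical} to pass between the hybrid and low-frequency statements, and apply the lower dyadic bound at the diffusion times $t_j=2^{-2j}$ for the converse. The only differences are cosmetic: you organize the equivalences as a single cycle, and you run the $t_j$ argument for all $j\le j_0$ at once (via $\braket{t}^{-\alpha}\le t^{-\alpha}$) instead of splitting off a finite family of blocks as the paper does.
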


\begin{proof}
	$(i) \Rightarrow (iii)$. 
	Assume first that { $U_0^\low\in\Bdot^{\sigma_1}_{2,\infty}$}. By the upper estimate in \eqref{eq:d-two-s},
	\begin{align}
		{ \norm{U_L^\low(t)}_{\Bdot^{\sigma}_{2,1}}}
		&=\sum_{j\le j_0}2^{j\sigma}\norm{\dyad_jU_L(t)}_2\notag\\
		&\le C\sum_{j\le j_0}2^{j\sigma}\ee^{-c2^{2j}t}\norm{\dyad_jU_0}_2\notag\\
		&\le C\norm{U_0^\low}_{\Bdot^{\sigma_1}_{2,\infty}}
		\sum_{j\le j_0}2^{j(\sigma-\sigma_1)}\ee^{-c2^{2j}t}.
		\label{eq:l-upp-sum}
	\end{align}
	Recall that $\alpha=\frac{\sigma-\sigma_1}{2}>0$. For $t\ge1$,
	\[
	2^{2j\alpha}\ee^{-c2^{2j}t}
	=t^{-\alpha}(2^j\sqrt t)^{2\alpha}\ee^{-c(2^j\sqrt t)^2}.
	\]
	Splitting the dyadic sum at the index for which $2^j\sqrt t$ is
	comparable to one gives a geometric sum below that index and an
	exponentially convergent sum above it. In particular,
	\begin{equation*}
		\sum_{j\in\Z}(2^j\sqrt t)^{2\alpha}\ee^{-c(2^j\sqrt t)^2}\le C_\alpha,
	\end{equation*}
	so \eqref{eq:l-upp-sum} gives
	{\[
	\norm{U_L^\low(t)}_{\Bdot^{\sigma}_{2,1}}
	\le C t^{-\alpha}\norm{U_0^\low}_{\Bdot^{\sigma_1}_{2,\infty}}
	\qquad(t\ge1).
	\]}
	
	For $0\le t\le1$, the semigroup is bounded on the fixed low-frequency Besov space and $\braket t\simeq1$; hence
	{\[
	\norm{U_L^\low(t)}_{\Bdot^{\sigma}_{2,1}}
	\le C\norm{U_0^\low}_{\Bdot^{\sigma}_{2,1}}.
	\]}
	This proves \eqref{eq:linear-upp-r}.
	
$(iii) \Rightarrow (i)$. 	Conversely, assume \eqref{eq:linear-upp-r}. Fix $j\le j_0$ sufficiently negative and set
	\begin{equation*} 
		t_j=2^{-2j}.
	\end{equation*}
	Then $t_j\ge1$ and $2^{2j}t_j=1$. By the lower dyadic estimate,
	\begin{equation}\label{eq:lower-at-tj}
		\norm{\dyad_jU_L(t_j)}_2
		\ge c\ee^{-C}\norm{\dyad_jU_0}_2.
	\end{equation}
	
	On the other hand,
	\begin{align*}
		2^{j\sigma}\norm{\dyad_jU_L(t_j)}_2
		&\le\norm{U_L^\low(t_j)}_{{\Bdot^{\sigma}_{2,1}}}\le C t_j^{-\frac12(\sigma-\sigma_1)}
		=C2^{j(\sigma-\sigma_1)}.
	\end{align*}
	Combining this with \eqref{eq:lower-at-tj} gives
	\begin{equation*} 
		2^{j\sigma_1}\norm{\dyad_jU_0}_2\le C
	\end{equation*}
	for any sufficiently negative $j$. Choose a fixed lower endpoint $J$ for the remaining finite family
	$J\le j\le j_0$. These blocks are controlled by
	\eqref{eq:li-upp-b}; indeed,
	\[
	2^{j\sigma_1}\norm{\dyad_jU_0}_2
	=2^{-j(\sigma-\sigma_1)}2^{j\sigma}\norm{\dyad_jU_0}_2
	\le C(J,j_0)\norm{U_0^\low}_{{\Bdot^{\sigma}_{2,1}}}.
	\]
	Thus ${U_0^\low\in\Bdot^{\sigma_1}_{2,\infty}}$.

	$(ii) \iff (iii)$. By \eqref{eq:high-Besov-critical}, the high-frequency estimates satisfy 
	\begin{equation}\label{8574555}
		\norm{u_L^\high(t)}_{\Bdot^{d/2-1}_{2,1}}
		+\norm{z_L^\high(t)}_{\Bdot^{d/2}_{2,1}}
		\le C\ee^{-ct}
		\left(
		\norm{u_0^\high}_{\Bdot^{d/2-1}_{2,1}}
		+\norm{z_0^\high}_{\Bdot^{d/2}_{2,1}}
		\right).
	\end{equation}
	
	Hence the low-frequency estimate implies \eqref{eq:linear-upp-h}. Conversely, the
	low-frequency seminorm is one of the nonnegative terms in $\norm{U_L}_{\cH^{\sigma}}$,
	so the hybrid estimate implies \eqref{eq:linear-upp-r}.  Thus, we complete the proof of Theorem \ref{thm:li-upp-}.  	 
\end{proof}
\subsection{Two-sided decay}

\begin{theorem}\label{thm:linear-t-s}
	
	Let $U_L=(u_L,z_L)$ solve \eqref{eq:l-1}. Assume \eqref{eq:linear-r-p2} and \eqref{eq:li-upp-b} hold. Then the following statements are equivalent:
	\begin{enumerate}[label=\textup{(\roman*)}]
		\item $U_0^\low\in\frakB^{\sigma_1}_{2,\infty}$;
		\item there exist constants $c,C>0$ and $t_0\ge1$ such that
		\begin{equation}\label{6373573}
			c t^{-\frac12(\sigma-\sigma_1)}
			\le\norm{U_L(t)}_{\cH^{\sigma}}
			\le C t^{-\frac12(\sigma-\sigma_1)},
			\qquad t\ge t_0;
		\end{equation}
		\item there exist constants $c,C>0$ and $t_0\ge1$ such that the low-frequency two-sided estimates hold
		\begin{equation}\label{673634}
			c t^{-\frac12(\sigma-\sigma_1)}
			\le\norm{U_L^\low(t)}_{ \Bdot^{\sigma}_{2,1}}
			\le C t^{-\frac12(\sigma-\sigma_1)},
			\qquad t\ge t_0.
		\end{equation}
	\end{enumerate}
\end{theorem}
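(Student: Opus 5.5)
We prove the cycle (i)$\Rightarrow$(iii)$\Rightarrow$(i), and then (ii)$\Leftrightarrow$(iii) exactly as in the proof of Theorem~\ref{thm:li-upp-}. Since $\mathfrak B^{\sigma_1}_{2,\infty}\subset\Bdot^{\sigma_1}_{2,\infty}$, the upper halves of \eqref{673634} and \eqref{6373573} follow from Theorem~\ref{thm:li-upp-}. So only the lower bounds need work. For (ii)$\Leftrightarrow$(iii) we use the exponential high-frequency decay \eqref{8574555}. It gives $\norm{U_L(t)}_{\cH^\sigma}=\norm{U_L^\low(t)}_{\Bdot^\sigma_{2,1}}+O(\ee^{-ct})$. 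An algebraic lower bound for one of these quantities therefore transfers to the other after enlarging $t_0$, because $\ee^{-ct}\ll t^{-\alpha}$.

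\emph{(i)$\Rightarrow$(iii), lower bound.} Take $(j_k)$, $c_0$, $M$ from Definition~\ref{def:frakB}, and discard finitely many $k$ so that $j_k\le j_0$. Given $t\ge t_0$, choose $k$ with $2^{-2j_k}\le t<2^{-2j_{k+1}}$; this is possible because the $j_k$ decrease to $-\infty$. Then $2^{2j_k}t\in[1,2^{2M})$, since $j_k-j_{k+1}\le M$. We keep only the block $j_k$ in the $\Bdot^\sigma_{2,1}$ sum and use the lower dyadic estimate in Corollary~\ref{cor:dyadic-two-sided}:
\[
\norm{U_L^\low(t)}_{\Bdot^\sigma_{2,1}}\ge 2^{j_k\sigma}\norm{\dyad_{j_k}U_L(t)}_2\ge c\,\ee^{-C2^{2M}}2^{j_k(\sigma-\sigma_1)}2^{j_k\sigma_1}\norm{\dyad_{j_k}U_0}_2\ge c\,c_0\ee^{-C4^M}2^{2j_k\alpha}.
\]
Finally, $2^{2j_k}\ge t^{-1}$ gives the lower bound $\gtrsim t^{-\alpha}$. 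One caveat: blocks $j\le j_0$ of $U_L^\low$ coincide with those of $U_L$ up to the cutoff near $j_0$. This is harmless because $j_k\le j_0-1$ after discarding finitely many terms.

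\emph{(iii)$\Rightarrow$(i).} Membership in $\Bdot^{\sigma_1}_{2,\infty}$ follows from Theorem~\ref{thm:li-upp-}. The nondegeneracy is the main step, and I expect it to be the hardest. The plan is to show that the lower bound at time $t$ is carried by blocks near the parabolic scale $2^j\simeq t^{-1/2}$. Set $A:=\norm{U_0^\low}_{\Bdot^{\sigma_1}_{2,\infty}}$ and $a_j:=2^{j\sigma_1}\norm{\dyad_jU_0}_2\le A$. By the upper dyadic bound,
\[
t^{\alpha}\norm{U_L^\low(t)}_{\Bdot^\sigma_{2,1}}\le C\sum_{j\le j_0}(2^{j}\sqrt t)^{2\alpha}\ee^{-c4^j t}a_j .
\]
Fix $N$ large. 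We split the sum into three ranges: the blocks with $2^j\sqrt t<2^{-N}$, which contribute at most $CA2^{-2\alpha N}$ (a geometric tail); the blocks with $2^j\sqrt t>2^N$, which contribute at most $CA\ee^{-c4^N}$; and the middle window $|j-j(t)|\le N$, where $2^{j(t)}\simeq t^{-1/2}$. Choose $N$ so that the two tails are below $c/2$, where $c$ is the constant in \eqref{673634}. Then for every $t\ge t_0$ there is some $j$ with $|j-j(t)|\le N$ and $a_j\ge c_0:=c/(2C(2N+1)2^{2\alpha N})$.

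Now we apply this with $t=t_m:=2^{-2m}$ for each integer $m\le m_0$, and denote by $i_m$ the resulting index, so that $|i_m-m|\le N$ and $a_{i_m}\ge c_0$. Let $(j_k)$ be the set $\{i_m\}$ listed in strictly decreasing order. It tends to $-\infty$. Its consecutive gaps are bounded by $M:=2N+1$: for any $m$, the window $[m-N,m+N]$ contains an element of the set. This yields $U_0^\low\in\mathfrak B^{\sigma_1}_{2,\infty}$ with these $c_0$, $M$.
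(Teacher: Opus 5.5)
Your proposal is correct and follows the paper's proof. For the lower bound you keep a single nondegenerate block at the parabolic scale, and for the converse you use the $\ell^1$ concentration of the kernel $(2^j\sqrt t)^{2\alpha}\ee^{-c4^jt}$ to locate nondegenerate blocks near $t^{-1/2}$. The differences are cosmetic. You select $j_k$ just above $t^{-1/2}$ rather than just below it. You also collect indices over all dyadic times $t_m=2^{-2m}$ and reorder them, which gives gaps $\le 2N+1$, whereas the paper samples the sparse times $m_n=-(2J+2)n$ to force monotonicity directly, with gaps $\le 4J+2$.
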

\begin{proof}
$(i) \Rightarrow (iii)$. 	Assume first that $U_0^\low\in\frakB^{\sigma_1}_{2,\infty}$, and recall that $(j_k)$, $M$, and $c_0$ are given in Definition \ref{def:frakB}.  The upper bound follows from \cref{thm:li-upp-}. Given a sufficiently large $t$, choose $j_k$ to be the largest element of the sequence satisfying
	\begin{equation}\label{eq:choose-jk}
		2^{j_k}\le t^{-1/2}.
	\end{equation}
	
  If $2^{j_k}\le2^{-M}t^{-1/2}$, then the preceding element $j_{k-1}$ satisfies $j_{k-1}\le j_k+M$ and hence $2^{j_{k-1}}\le t^{-1/2}$. Hence
	\begin{equation}\label{eq:jk-comparable}
		2^{-M}t^{-1/2}<2^{j_k}\le t^{-1/2}.
	\end{equation}
	Using a single dyadic block and the lower semigroup estimate,
	\begin{align*}
		\norm{U_L^\low(t)}_{\Bdot^{\sigma}_{2,1}}
		&\ge2^{j_k\sigma}\norm{\dyad_{j_k}U_L(t)}_2\\
		&\ge c2^{j_k\sigma}\ee^{-C2^{2j_k}t}\norm{\dyad_{j_k}U_0}_2\\
		&\ge cc_0\ee^{-C}2^{j_k(\sigma-\sigma_1)}\\
		&\ge c' t^{-\frac12(\sigma-\sigma_1)},
	\end{align*}
	where \eqref{eq:jk-comparable} was used in the last step. This then proves \eqref{673634}.
	
	$(iii) \Rightarrow (i)$.   Conversely, assume \eqref{673634}. By   \cref{thm:li-upp-},
	\[
	A:=\sup_{j\le j_0}a_j<\infty,
	\qquad
	a_j:=2^{j\sigma_1}\norm{\dyad_jU_0}_2.
	\]
	For $t\ge t_0$, choose $m=m(t)\in\Z$ such that
	
	\begin{equation}\label{eq:m-of-t}
		2^m\le t^{-1/2}<2^{m+1}.
	\end{equation}
	The upper dyadic semigroup bound gives
	\begin{align}
		t^{\frac12(\sigma-\sigma_1)}
		\norm{U_L^\low(t)}_{{\Bdot^{\sigma}_{2,1}}}
		&\le C\sum_{j\le j_0}
		(t^{1/2}2^j)^{\sigma-\sigma_1}
		\ee^{-c2^{2j}t}a_j\notag\\
		&\le C\sum_{j\le j_0}\omega(j-m)a_j,
		\label{eq:w-a-sum}
	\end{align}
	where
	\begin{equation*} 
		\omega(k):=2^{k(\sigma-\sigma_1)}\ee^{-c_12^{2k}}
	\end{equation*}
	for a possibly smaller $c_1>0$. The sequence $\omega$ belongs to $\ell^1(\Z)$.
	
	Let $\underline c$ denote the positive lower constant in \eqref{673634}. Choose $J\ge1$ so large that
	\begin{equation}\label{eq:omega-tail}
		CA\sum_{|k|>J}\omega(k)\le\frac{\underline c}{2}.
	\end{equation}
	Combining the lower bound in \eqref{673634} with \eqref{eq:w-a-sum} and \eqref{eq:omega-tail} yields
	\begin{equation*} 
		\sum_{|j-m|\le J}\omega(j-m)a_j\ge\frac{\underline c}{2C}.
	\end{equation*}
	
	Since there are at most $2J+1$ terms and $\omega$ is bounded, there exists an index $j=j(m)$ such that
	\begin{equation}\label{eq:j-near-m}
		|j-m|\le J,
		\qquad
		a_j\ge c_2>0,
	\end{equation}
	where $c_2$ is independent of $m$.	Now take
	\[
	m_n=-(2J+2)n
	\]
	for all sufficiently large $n$, and set $t_n=2^{-2m_n}$. Then \eqref{eq:m-of-t} holds with $m=m_n$. Choose $j_n$ by \eqref{eq:j-near-m}. We have $j_n\to-\infty$, $a_{j_n}\ge c_2$, and
	\begin{align*}
		|j_{n+1}-j_n|
		&\le|j_{n+1}-m_{n+1}|+|m_{n+1}-m_n|+|m_n-j_n|\\
		&\le J+(2J+2)+J=4J+2.
	\end{align*}
	Moreover, because $m_{n+1}\le m_n-(2J+2)$ and $|j_n-m_n|\le J$, the sequence is strictly decreasing. Hence $U_0^\low\in\frakB^{\sigma_1}_{2,\infty}$.
	
	$(ii) \iff (iii)$.  	It remains to justify the hybrid statement.  If the dyadic nondegeneracy holds, the
	low-frequency lower bound is already a lower bound for the hybrid norm, while
	\eqref{8574555} gives the hybrid upper bound.  Conversely, suppose
	the hybrid norm has the two-sided algebraic behavior, the hybrid upper bound implies the
	low-frequency upper bound.  Moreover, by \eqref{8574555},
	\begin{align*}
		\norm{U_L^\low(t)}_{{\Bdot^{\sigma}_{2,1}}}
		&\ge \norm{U_L(t)}_{\cH^{\sigma}}
		-\norm{u_L^\high(t)}_{\Bdot^{d/2-1}_{2,1}}
		-\norm{z_L^\high(t)}_{\Bdot^{d/2}_{2,1}}\\
		&\ge ct^{-\frac12(\sigma-\sigma_1)}-C\ee^{-c_1t}.
	\end{align*}
	For all sufficiently large $t$, the last expression is bounded below by
	$(c/2)t^{-(\sigma-\sigma_1)/2}$.  Hence the low-frequency norm also has two-sided
	decay.
\end{proof}

\section{{Critical global solutions and the dissipative quotient}}\label{sec:kernel}

The results  in Theorem \ref{thm:known-global} imply that the whole stress can be  controlled  in the energy norm, but its dissipative part is the pair $(u,z)$. 
Corresponding to the linear system \eqref{eq:l-1}, the nonlinear system is given by
\begin{equation}\label{eq:nonlinear-1}
\begin{cases}
\partial_t u+\mathbb P[(u\cdot\nabla)u]+\nu\Lambda^2u-\nu_1\Lambda z=0,\\
\partial_t z+\cT[(u\cdot\nabla)\tau]
+\cT Q(\tau,\nabla u)
+\dfrac{\nu_2}{2}\Lambda u=0,\\
\diver u=0,\\
\mathbb P\diver\tau=\Lambda z.
\end{cases}
\end{equation}

The following identity makes the derivative structure of $\cT(u\cdot\nabla\tau)$ explicit, which will be used in the next proposition. 
\begin{lemma}[{ \cite{ChenHao2019,Zhu2018}}]\label{lem:chen-hao}
Let $u$ be divergence-free and $\tau$ be smooth. Then
\begin{equation*} 
\begin{aligned}
\Pj\diver(u\cdot\nabla\tau)
={}&\Pj\bigl(u\cdot\nabla\Pj\diver\tau\bigr)
+\Pj(\nabla u\cdot\nabla\tau) -\Pj\bigl(\nabla u\cdot\nabla\Delta^{-1}\diver\diver\tau\bigr),
\end{aligned}
\end{equation*}

{and
\[
[\nabla u\cdot\nabla\tau]_i
=\sum_j\partial_ju\cdot\nabla\tau_{ij},
\qquad
[\nabla u\cdot\nabla\Delta^{-1}\diver\diver\tau]_i
=\partial_i u\cdot\nabla\Delta^{-1}\diver\diver\tau.
\]}
\end{lemma}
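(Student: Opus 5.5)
We will verify the identity by a direct componentwise computation, using only $\diver u=0$, the definition $\Pj=\Id+\nabla(-\Delta)^{-1}\diver$, and the fact that $\Pj$ annihilates gradients. Throughout we assume $u,\tau$ smooth and decaying, so that all objects lie in $\cS'_h$; the general case then follows by density.

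First we expand $\diver(u\cdot\nabla\tau)$ by the Leibniz rule. In components,
\[
[\diver(u\cdot\nabla\tau)]_i=\partial_j(u_k\partial_k\tau_{ij})
=\partial_ju_k\,\partial_k\tau_{ij}+u_k\partial_k\partial_j\tau_{ij}.
\]
The first term is $\sum_j\partial_ju\cdot\nabla\tau_{ij}=[\nabla u\cdot\nabla\tau]_i$, and the second is $[u\cdot\nabla\diver\tau]_i$. Hence
\[
\Pj\diver(u\cdot\nabla\tau)=\Pj(\nabla u\cdot\nabla\tau)+\Pj(u\cdot\nabla\diver\tau).
\]

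Next we split $\diver\tau$ into its solenoidal and gradient parts:
\[
\diver\tau=\Pj\diver\tau+(\Id-\Pj)\diver\tau,\qquad (\Id-\Pj)\diver\tau=\nabla\phi,\quad \phi:=\Delta^{-1}\diver\diver\tau.
\]
The sign here follows from $-\nabla(-\Delta)^{-1}\diver=\nabla\Delta^{-1}\diver$. This gives
\[
\Pj(u\cdot\nabla\diver\tau)=\Pj(u\cdot\nabla\Pj\diver\tau)+\Pj(u\cdot\nabla\nabla\phi).
\]

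The only step needing care is the gradient remainder. We commute the derivative out:
\[
u_k\partial_k\partial_i\phi=\partial_i(u_k\partial_k\phi)-\partial_iu_k\,\partial_k\phi .
\]
The first term is a gradient, so $\Pj$ kills it. Therefore
\[
\Pj(u\cdot\nabla\nabla\phi)=-\Pj(\nabla u\cdot\nabla\Delta^{-1}\diver\diver\tau),
\]
where $[\nabla u\cdot\nabla\phi]_i=\partial_iu\cdot\nabla\phi$. Combining this with the two previous displays yields the stated identity, with the $i$-th components exactly as written in the lemma.

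We do not expect a real obstacle; the only point to watch is the sign convention for $\Pj$ and for $\Delta^{-1}\diver\diver$.
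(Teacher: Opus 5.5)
Your computation is correct and is the standard derivation behind this identity. The paper gives no proof of its own and cites Zhu and Chen--Hao, where the identity comes from the same three steps you use: the Leibniz expansion, the Helmholtz splitting $\diver\tau=\Pj\diver\tau+\nabla\Delta^{-1}\diver\diver\tau$, and removal of the gradient $\nabla(u\cdot\nabla\Delta^{-1}\diver\diver\tau)$ by $\Pj$; all your signs check out.

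One small correction to your opening sentence: nothing in your argument uses $\diver u=0$, so the identity holds for any smooth $u$. The divergence-free condition is only needed later in the paper, for the divergence forms $[\nabla u\cdot\nabla\tau]_i=\partial_k(\partial_ju_k\,\tau_{ij})$ used in the product estimates.
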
 

 We now justify the inequality for the active pair.  
\begin{proposition}[Active critical Lyapunov inequality]\label{prop:act-Lyap}

Under the assumptions of \cref{thm:known-global}, define
\begin{equation}\label{eq:active-11}
\cE(t)=
\norm{(u,z)^\low(t)}_{\Bdot^{d/2-1}_{2,1}}
+\norm{u^\high(t)}_{\Bdot^{d/2-1}_{2,1}}
+\norm{z^\high(t)}_{\Bdot^{d/2}_{2,1}}
\end{equation}
and
\begin{equation}\label{eq:active-d-11}
\cD(t)=
\norm{(u,z)^\low(t)}_{\Bdot^{d/2+1}_{2,1}}
+\norm{u^\high(t)}_{\Bdot^{d/2+1}_{2,1}}
+\norm{z^\high(t)}_{\Bdot^{d/2}_{2,1}}.
\end{equation}
Set
\begin{equation}\label{eq:criti-11}
\cA(t):=
\norm{(u,\tau)^\low(t)}_{\Bdot^{d/2-1}_{2,1}}
+\norm{u^\high(t)}_{\Bdot^{d/2-1}_{2,1}}
+\norm{\tau^\high(t)}_{\Bdot^{d/2}_{2,1}}.
\end{equation}
There exist energy
$\mathscr E_a$  and dissipation
$\mathscr D_a$ such that
\begin{equation}\label{eq:Lyap-equi}
C^{-1}\cE(t)\le\mathscr E_a(t)\le C\cE(t),
\qquad
C^{-1}\cD(t)\le\mathscr D_a(t)\le C\cD(t),
\end{equation}
and
\begin{equation}\label{52523}
\frac{\dd}{\dd t}\mathscr E_a(t)+c_0\mathscr D_a(t)
\le C\cA(t)\mathscr D_a(t)
\end{equation}
for some $c_0, C>0$, $t>0$.  Moreover, we have 
\begin{equation}\label{525232}
{\frac{\dd}{\dd t}\mathscr E_a(t)+\frac{c_0}{2}  \mathscr D_a(t)\le0.}
\end{equation}
for small  $\varepsilon_0>0$ in \eqref{eq:X0}.
\end{proposition}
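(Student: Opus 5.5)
We would build $\mathscr E_a$ and $\mathscr D_a$ blockwise from the linear Lyapunov functionals already constructed, and then treat the nonlinear terms of \eqref{eq:nonlinear-1} as forcing. For $j\ge j_0-1$ we take the functional $H_j\simeq\norm{u_j}_2+2^j\norm{z_j}_2$ from the proof of Proposition~\ref{6376976}. For $j\le j_0$ we take an analogous hypocoercive functional for the symmetrized variables $Y$ of \eqref{eq:cstar-Y}:
\[
L_j^2=\norm{Y_{1,j}}_2^2+\norm{Y_{2,j}}_2^2-2\kappa_2 2^{-j}\ip{\Lambda Y_{1,j}}{Y_{2,j}}
\]
with $\kappa_2$ small. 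Differentiating $L_j^2$ along \eqref{eq:forced-linear} gives
\[
\tfrac{\dd}{\dd t}L_j+c2^{2j}L_j\le C\norm{\dyad_j\mathcal F}_2 ,
\]
because the cross term produces $-\kappa_2 2^{j}\norm{Y_{2,j}}_2^2$ up to terms absorbed by the viscous dissipation $\nu2^{2j}\norm{Y_{1,j}}_2^2$, using $2^j\lesssim 1$ at low frequency. We then set
\[
\mathscr E_a=\sum_{j\le j_0}2^{j(d/2-1)}L_j+\sum_{j\ge j_0-1}2^{j(d/2-1)}H_j,
\]
and define $\mathscr D_a$ as the corresponding sum of $2^{j(d/2+1)}L_j$ and $2^{j(d/2-1)}(2^{2j}\norm{u_j}_2+\norm{q_j}_2)$. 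The equivalences \eqref{eq:Lyap-equi} are then immediate from \eqref{52643632}, from $H_j\simeq\norm{u_j}_2+\norm{q_j}_2$, and from $q=\Lambda z$. Summing the block inequalities yields
\[
\tfrac{\dd}{\dd t}\mathscr E_a+c_0\mathscr D_a\lesssim \norm{\mathcal F^\low}_{\Bdot^{d/2-1}_{2,1}}+\norm{F^\high}_{\Bdot^{d/2-1}_{2,1}}+\norm{G^\high}_{\Bdot^{d/2}_{2,1}} .
\]
Here $F=-\Pj(u\cdot\nabla u)$ and $G=-\cT(u\cdot\nabla\tau)-\cT Q(\tau,\nabla u)$.

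It therefore suffices to bound each source norm by $\cA\cD$. For $F$, the standard product law
\[
\norm{u\cdot\nabla u}_{\Bdot^{d/2-1}_{2,1}}\lesssim\norm{u}_{\Bdot^{d/2-1}_{2,1}}\norm{u}_{\Bdot^{d/2+1}_{2,1}}
\]
applies, and the hybrid splitting of $\norm{u}_{\Bdot^{d/2+1}_{2,1}}$ is controlled by $\cD$. For $\cT Q(\tau,\nabla u)$, the operator $\cT$ has order zero and $\tau\in\Bdot^{d/2-1}_{2,1}\cap\Bdot^{d/2}_{2,1}$ is controlled by $\cA$. Hence
\[
\norm{\tau\nabla u}_{\Bdot^{d/2-1}_{2,1}}\lesssim\norm{\tau}_{\Bdot^{d/2}_{2,1}}\norm{\nabla u}_{\Bdot^{d/2-1}_{2,1}},
\]
and at high frequency
\[
\norm{\tau\nabla u}_{\Bdot^{d/2}_{2,1}}\lesssim\norm{\tau}_{\Bdot^{d/2}_{2,1}}\norm{\nabla u}_{\Bdot^{d/2}_{2,1}}\lesssim\cA\cD .
\]
This uses the critical hybrid product estimates of Appendix~\ref{app:critical-hybrid-products}, with low/high splittings to handle the mismatch of indices. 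Since $\norm{\tau}_{L^\infty}$ appears only as a coefficient, the non-decaying passive part $K$ is harmless here.

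The main obstacle is the transport term $\cT(u\cdot\nabla\tau)$ at high frequency, where a full derivative falls on $\tau$ but only $\tau\in\Bdot^{d/2}_{2,1}$ is available. We would use Lemma~\ref{lem:chen-hao}, which gives
\[
\Lambda\cT(u\cdot\nabla\tau)=\Pj(u\cdot\nabla q)+\Pj(\nabla u\cdot\nabla\tau)-\Pj(\nabla u\cdot\nabla\Delta^{-1}\diver\diver\tau).
\]
The last two terms are products of $\nabla u$ with $\nabla\tau$. In $\Bdot^{d/2-1}_{2,1}$ they are bounded by $\norm{\nabla u}_{\Bdot^{d/2}_{2,1}}\norm{\tau}_{\Bdot^{d/2}_{2,1}}\lesssim\cD\cA$. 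The term $\Pj(u\cdot\nabla q)$ cannot be estimated as a source, since it loses a derivative. Instead, we keep it inside the $H_j$ energy estimate as a transport term and write
\[
\dyad_j(u\cdot\nabla q)=u\cdot\nabla q_j+[\dyad_j,u\cdot\nabla]q .
\]
The first piece vanishes in the $L^2$ pairing with $q_j$ because $\diver u=0$. For the cross term $\ip{u_j}{q_j}$ in $\Theta_j$, the piece $\ip{u_j}{u\cdot\nabla q_j}=-\ip{u\cdot\nabla u_j}{q_j}$ is handled using $u_j$'s extra regularity. The standard commutator estimate gives
\[
\sum_j2^{j(d/2-1)}\norm{[\dyad_j,u\cdot\nabla]q}_2\lesssim\norm{\nabla u}_{\Bdot^{d/2}_{2,1}}\norm{q}_{\Bdot^{d/2-1}_{2,1}}\lesssim\cD\,\cA .
\]
At low frequencies no loss occurs, since $2^j\lesssim1$, so the term is directly a source bounded by $\cA\cD$. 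This establishes \eqref{52523}. Finally, Theorem~\ref{thm:known-global} gives $\cA(t)\le C\cX_0\le C\varepsilon_0$. Taking $\varepsilon_0$ small enough that $C\cA\le c_0/2$ yields \eqref{525232}.
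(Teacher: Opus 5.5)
Your high-frequency argument (Lemma~\ref{lem:chen-hao}, keeping the transport $\Pj(u\cdot\nabla q)$ inside the $H_j$ energy, and the commutator $[\dyad_j,u\cdot\nabla]q$) is sound and is essentially the paper's. Two low-frequency steps, however, fail as written.

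\textbf{The low-frequency functional.} The weight $2^{-j}$ in the cross term does not close. Differentiate $-2\kappa_22^{-j}\ip{\Lambda Y_{1,j}}{Y_{2,j}}$ and use $\partial_tY_2=-c_*\Lambda Y_1$. Besides the damping of $Y_2$, you get a positive term of size $\kappa_2c_*2^{-j}\norm{\Lambda Y_{1,j}}_2^2\simeq\kappa_22^{j}\norm{Y_{1,j}}_2^2$. The viscous term $\nu\norm{\Lambda Y_{1,j}}_2^2\simeq\nu2^{2j}\norm{Y_{1,j}}_2^2$ cannot absorb it as $j\to-\infty$; the inequality $2^j\lesssim1$ goes the wrong way. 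In fact no functional equivalent to $\norm{Y_j}_2$ can damp $Y_2$ at rate $2^j$. By Lemma~\ref{lem:low-semigroup}, the solution rotates between $Y_1$ and $Y_2$ at rate $c_*r$ while decaying only at rate $\nu r^2/2$. Use a constant coefficient instead, $-2\kappa_2\ip{\Lambda u_j}{z_j}$ with $\kappa_2<\nu$, as in the paper. The cross term is then $O(2^j)$-small, and its derivative gives the damping $\kappa_2\beta_1\norm{\Lambda z_j}_2^2$. The bad term $\kappa_2\beta_2\norm{\Lambda u_j}_2^2$ is absorbed by the viscosity.

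\textbf{The low-frequency passive-stress transport.} This is the more important gap. The bound $\norm{(\cT(u\cdot\nabla\tau))^\low}_{\Bdot^{d/2-1}_{2,1}}\lesssim\cA\cD$ is true, but not ``because $2^j\lesssim1$''. At low frequencies the obstruction is not a derivative loss but the lack of a dissipative factor. Consider the low--high interaction $\cT(\lowcut_{j-1}u\cdot\nabla\dyad_j\tau)$, $j\le j_0$. You only have $\norm{\lowcut_{j-1}u}_{L^\infty}\lesssim2^j\cA$, since the low-frequency part of $\cD$ measures $u$ in $\Bdot^{d/2+1}$, which does not control $L^\infty$. Moreover $\tau$ is not dissipated at all. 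A direct estimate therefore gives $\cA^2$, which is not of the form $\cA\cD$ required in \eqref{52523}.

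This is exactly where the passive stress bites. You must split $\cT(u\cdot\nabla\tau)=u\cdot\nabla z+[\cT,u\cdot\nabla]\tau$ at low frequencies as well:
\begin{itemize}
\item The term $u\cdot\nabla z=\diver(u\otimes z)$ uses the dissipation of $z$.
\item In the commutator, the first-moment bound $\int\abs{y}\abs{K_k(y)}\dd y\lesssim2^{-k}$ for the localized kernel of $\cT$ moves the derivative onto $u$. This gives $\norm{\nabla\lowcut_{j-1}u}_{L^\infty}\norm{\dyad_j\tau}_2$, which sums to $\lesssim\cD\cA$.
\end{itemize}
This is the paper's $G_j^{\rm rem}$ together with the second estimate of Lemma~\ref{lem:actives-com}. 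Applying Lemma~\ref{lem:chen-hao} at low frequencies would also work. But then $\Lambda^{-1}$ costs $2^{-j}$, which is recovered only through the divergence forms of all three terms.

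\textbf{A smaller misallocation in $\cT Q$.} Your bound $\norm{\tau}_{\Bdot^{d/2}_{2,1}}\norm{\nabla u}_{\Bdot^{d/2-1}_{2,1}}$ is not $\lesssim\cA\cD$, since $\norm{u^\low}_{\Bdot^{d/2}_{2,1}}$ is not controlled by $\cD$. Use $\norm{\nabla u}_{\Bdot^{d/2}_{2,1}}\norm{\tau}_{\Bdot^{d/2-1}_{2,1}}$ instead (Lemma~\ref{lem:m-product}).
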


\begin{proof}

We first construct the linear dyadic energies and then estimate the nonlinear remainders.

\smallskip
\noindent\emph{ Step 1: Estimates for low-frequency.}
For $j\le j_0$, write $u_j=\dyad_ju$, $z_j=\dyad_jz$.  After localization to \eqref{eq:nonlinear-1},  the active
system has the form
\begin{equation}\label{5252432}
\partial_tu_j+\nu\Lambda^2u_j-\beta_1\Lambda z_j={F_j},
\qquad
\partial_tz_j+\beta_2\Lambda u_j={G_j},
\end{equation}
where
\begin{align*}
	F_j:=-\dot{\Delta}_j(\mathbb P(u\cdot\nabla u)),~~G_j:=\dot{\Delta}_j(\Lambda^{-1}\mathbb P \diver(-u\cdot \nabla\tau-Q(\tau,\nabla u))).
\end{align*}

Choose $0<\kappa_2<\nu$ and define
\begin{equation}\label{5232344}
L_j^2:=\beta_2\norm{u_j}_2^2+\beta_1\norm{z_j}_2^2
-2\kappa_2\ip{\Lambda u_j}{z_j}.
\end{equation}
By Bernstein's inequality 
\[
2\kappa_2\abs{\ip{\Lambda u_j}{z_j}}
\le C\kappa_22^j(\norm{u_j}_2^2+\norm{z_j}_2^2).
\]
Taking $j_0$ more negative if necessary gives
\begin{equation}\label{523239}
c(\norm{u_j}_2+\norm{z_j}_2)\le L_j
\le C(\norm{u_j}_2+\norm{z_j}_2).
\end{equation}
Multiplying the two equations in \eqref{5252432} by
$\beta_2 u_j$ and $\beta_1 z_j$, respectively,  yields
\begin{equation}\label{5232139}
\frac12\frac{\dd}{\dd t}
(\beta_2\norm{u_j}_2^2+\beta_1\norm{z_j}_2^2)
+\beta_2\nu\norm{\Lambda u_j}_2^2
=\beta_2\ip{F_j}{u_j}+\beta_1\ip{G_j}{z_j}.
\end{equation}

Moreover,
\begin{align}
\frac{\dd}{\dd t}\ip{\Lambda u_j}{z_j}
={}&-\nu\ip{\Lambda^2u_j}{\Lambda z_j}
+\beta_1\norm{\Lambda z_j}_2^2
-\beta_2\norm{\Lambda u_j}_2^2\notag\\
&+\ip{\Lambda F_j}{z_j}+\ip{\Lambda u_j}{G_j}.
\label{52132139}
\end{align}
Subtracting $\kappa_2$ times \eqref{52132139} from
\eqref{5232139}, and using
\[
\abs{\ip{\Lambda^2u_j}{\Lambda z_j}}
\le C2^j\norm{\Lambda u_j}_2\norm{\Lambda z_j}_2,
\]
we then obtain from \eqref{523239}
\begin{equation*} 
\frac12\frac{\dd}{\dd t}L_j^2
+c2^{2j}(\norm{u_j}_2^2+\norm{z_j}_2^2)
\le C L_j(\norm{F_j}_2+\norm{G_j}_2).
\end{equation*}
   Dividing by a standard regularization $(L_j^2+\varepsilon^2)^{1/2}$ and then letting $\varepsilon$ small enough, we have
\begin{equation}\label{53764}
\frac{\dd}{\dd t}L_j+c2^{2j}L_j
\le C(\norm{F_j}_2+\norm{G_j}_2).
\end{equation}

\smallskip
\noindent\emph{Step 2: Estimates for high-frequency.}
For $j\ge j_0-1$, use $q_j=\dyad_jq$ and consider
\begin{equation}\label{52466}
\partial_tu_j+\nu\Lambda^2u_j-\beta_1 q_j={\mathcal F_j},
\qquad
\partial_tq_j+\beta_2\Lambda^2u_j={\mathcal G_j}.
\end{equation}
where
\begin{align*}
	\mathcal F_j:=F_j,~~\mathcal G_j=\Lambda G_j.
\end{align*}
Fix $0<\kappa_3<\nu/(2\beta_2)$ and put $b_2=\kappa_3\nu/\beta_2$.  Define  
\begin{equation}\label{5372164}
\Theta_j^2:=\norm{u_j}_2^2+b_2\norm{q_j}_2^2
-2\kappa_3\ip{u_j}{q_j}.
\end{equation}
Since $b_2=\kappa_3\nu/\beta_2>2\kappa_3^2$, after decreasing $\kappa_3$ if necessary,
\begin{equation}\label{53721649}
c(\norm{u_j}_2+\norm{q_j}_2)\le \Theta_j
\le C(\norm{u_j}_2+\norm{q_j}_2).
\end{equation}
A direct differentiation gives
\begin{align*}
\frac12\frac{\dd}{\dd t}\Theta_j^2
={}&-(\nu-\kappa_3\beta_2)\norm{\Lambda u_j}_2^2
-\kappa_3\beta_1\norm{q_j}_2^2
+\beta_1\ip{u_j}{q_j}\notag\\
&+\ip{\mathcal F_j}{u_j-\kappa_3q_j}
+\ip{\mathcal G_j}{b_2q_j-\kappa_3u_j}.
\label{eq:high-energy-raw}
\end{align*}

Choose $j_h$ so large that, for all $j\ge j_h$, Bernstein and Young inequalities give
\[
\beta_1\abs{\ip{u_j}{q_j}}
\le\frac{\nu-\kappa_3\beta_2}{2}\norm{\Lambda u_j}_2^2
+\frac{\kappa_3\beta_1}{2}\norm{q_j}_2^2.
\]
Regularizing $\Theta_j$ before division as in the low-frequency argument, we obtain
\begin{equation}\label{575329}
\frac{\dd}{\dd t}\Theta_j
+c\bigl(\norm{u_j}_2+\norm{q_j}_2\bigr)
\le C(\norm{\mathcal F_j}_2+\norm{\mathcal G_j}_2).
\end{equation}
 Taking its scalar product with $u_j$, using Bernstein's
inequality, and dividing by $(\norm{u_j}_2^2+\varepsilon^2)^{1/2}$ before letting
$\varepsilon$ small enough, yields
\begin{equation}\label{85445}
\frac{\dd}{\dd t}\norm{u_j}_2+c2^{2j}\norm{u_j}_2
\le C\norm{q_j}_2+C\norm{\mathcal F_j}_2.
\end{equation}

Choose $0<\eta_2\ll1$, so that the term $C\eta_2\norm{q_j}_2$ is absorbed by the
$q_j$-damping in \eqref{575329}, and set
\begin{equation}\label{57522329}
H_j:=\Theta_j+\eta_2\norm{u_j}_2,
\end{equation}
we have
\begin{equation}\label{8544541}
c(\norm{u_j}_2+\norm{q_j}_2)\le H_j
\le C(\norm{u_j}_2+\norm{q_j}_2),
\end{equation}
then $\eta_2$ times
\eqref{85445} and add  \eqref{575329}  give
\begin{equation}\label{854454}
\frac{\dd}{\dd t}H_j
+c\left(2^{2j}\norm{u_j}_2+\norm{q_j}_2\right)
\le C(\norm{\mathcal F_j}_2+\norm{\mathcal G_j}_2).
\end{equation}

The low threshold $j_0$ fixed by \eqref{eq:j0-low} and the high threshold $j_h$ need
not coincide, but there are only finitely many intermediate shells $j_0<j<j_h$. On these shells we use the low-frequency functional $L_j$ of Step~1. Indeed, inspecting Step~1, the only places where $j\le j_0$ was used are the equivalence \eqref{523239} and the absorption of the cross term $\kappa_2\nu\ip{\Lambda^2u_j}{\Lambda z_j}$; both remain valid for all $j\le j_h$ provided $\kappa_2$ is chosen small enough in terms of $2^{j_h}$, and since $j_h$ depends only on $\nu,\beta_1,\beta_2$ this is a legitimate choice. Hence \eqref{53764} holds for all $j\le j_h$, with $2^{2j}\simeq1$ on the intermediate shells.
  
\smallskip
\noindent\emph{Step 3: Nonlinear localization and cancellation of principal transport.}
We now identify the forcing terms in systems
\eqref{5252432} and \eqref{52466}.
For $j\le j_0$, localizing the { equations
\eqref{eq:nonlinear-1}} give
{\begin{equation}\label{96456}
\left\{
\begin{aligned}
\partial_tu_j+\mathbb P(u\cdot\nabla u_j)+\nu\Lambda^2u_j-\beta_1\Lambda z_j
&=F_j^{\rm rem},\\
\partial_tz_j+u\cdot\nabla z_j+\beta_2\Lambda u_j
&=G_j^{\rm rem},
\end{aligned}
\right.
\end{equation}
where
\begin{align*}
F_j^{\rm rem}
&:=-\mathbb P[\dyad_j,u\cdot\nabla]u,
\\
G_j^{\rm rem}
&:=-[\dyad_j,u\cdot\nabla]z
-\dyad_j[\cT,u\cdot\nabla]\tau
-\dyad_j\cT Q(\tau,\nabla u).
\end{align*} }
 Noticing that fact $\diver u=0$, we have
\begin{equation}\label{964456}
\ip{u\cdot\nabla u_j}{u_j}=0,
\qquad
\ip{u\cdot\nabla z_j}{z_j}=0.
\end{equation}
In the derivative of the cross term
$\ip{\Lambda u_j}{z_j}$, use
\begin{equation}\label{525622}
\Lambda(u\cdot\nabla u_j)
=u\cdot\nabla\Lambda u_j+[\Lambda,u\cdot\nabla]u_j.
\end{equation}

The first term in \eqref{525622} cancels with
$\ip{\Lambda u_j}{u\cdot\nabla z_j}$ after integration by parts, since $\diver u=0$ and $z_j$ is divergence-free (so that the Leray projector in the $u$-equation may be dropped in the scalar product with $z_j$).  Hence only
$[\Lambda,u\cdot\nabla]u_j$ remains, and the third estimate of Lemma \ref{lem:actives-com} gives
\begin{equation*}
\sum_{j\le j_0}2^{j\scal}
\| [\Lambda,u\cdot\nabla]u_j\|_2
\le C\|\nabla u\|_{\dot B^{d/2}_{2,1}}
\|u^\ell\|_{\dot B^{\scal}_{2,1}}
\le C\mathcal A(t)\mathcal D(t).
\end{equation*}
  Applying Lemma
\ref{lem:chen-hao} to the stress equation gives  
\begin{equation}\label{525235}
\partial_tq+\mathbb P(u\cdot\nabla q)+\beta_2\Lambda^2u=R_q,
\end{equation}
where
\begin{equation*} 
R_q=-\mathbb P(\nabla u\cdot\nabla\tau)
+\mathbb P\bigl(\nabla u\cdot\nabla\Delta^{-1}\diver\diver\tau\bigr)
-\mathbb P\diver Q(\tau,\nabla u).
\end{equation*}
Thus, for $j\ge j_0-1$, the forcing terms in
\eqref{52466} are
\begin{align*}
\mathcal F_j^{\rm rem}
&:=-\mathbb P[\dyad_j,u\cdot\nabla]u,
\\
\mathcal G_j^{\rm rem}
&:=-\mathbb P[\dyad_j,u\cdot\nabla]q+\dyad_jR_q.
\end{align*}

	As above, the Leray projectors disappear in scalar products because $u_j,q_j$ are
divergence-free.  The principal high-frequency transports satisfy
\begin{align*}
	\ip{u\cdot\nabla u_j}{u_j}&=0,
	&\ip{u\cdot\nabla q_j}{q_j}&=0, \\
	\ip{u\cdot\nabla u_j}{q_j}
	+\ip{u_j}{u\cdot\nabla q_j}&=0,	
\end{align*}
which imply that the principal transport also cancels in the mixed high-frequency energy \eqref{5372164}.  

\smallskip
\noindent\emph{Step 4: Summation of nonlinear remainders.}
The standard critical commutator estimate gives an $\ell^1$ sequence $(c_j)$,
$\sum_jc_j\le1$, such that
\begin{equation}\label{52523533}
\norm{[\dyad_j,u\cdot\nabla]f}_2
\le Cc_j2^{-j(d/2-1)}
\norm{\nabla u}_{\Bdot^{d/2}_{2,1}}
\norm f_{\Bdot^{d/2-1}_{2,1}}.
\end{equation}
The high-frequency norms are interpreted in the hybrid sense.  In particular,
\begin{equation}\label{525235331}
\norm{\nabla u}_{\Bdot^{d/2}_{2,1}}
\le C\cD(t),
\qquad
\norm{q}_{\Bdot^{d/2-1}_{2,1}}
+\norm{\tau}_{\Bdot^{d/2-1}_{2,1}}
\le C\cA(t).
\end{equation}
For the non-transport terms, use the divergence forms
\begin{align*}
[\nabla u\cdot\nabla\tau]_i
&=\partial_k(\partial_ju_k\,\tau_{ij}),
 \\
[\nabla u\cdot\nabla\Delta^{-1}\diver\diver\tau]_i
&=\partial_k\bigl(\partial_i u_k\,
\Delta^{-1}\diver\diver\tau\bigr),
\end{align*}
which follow from $\diver u=0$.   

Using Bony's decomposition, Bernstein's
inequality, and the critical hybrid product estimates in
Lemma \ref{lem:critical-h-m}, Corollary \ref{cor:critical-h-O}, Lemma \ref{lem:actives-com} gives,
\begin{align}
\norm{([\cT,u\cdot\nabla]\tau)^\low}_{\Bdot^{d/2-1}_{2,1}}
+\norm{(\cT Q(\tau,\nabla u))^\low}_{\Bdot^{d/2-1}_{2,1}}
&\le C\cA(t)\cD(t),
\label{963}\\
\norm{R_q^\high}_{\Bdot^{d/2-1}_{2,1}}
&\le C\cA(t)\cD(t),
\label{9633}
\end{align}
which together with Lemma \ref{lem:m-product} and Lemma \ref{lem:tran-c-1} yields
\begin{align}
&\sum_{j\le j_0}2^{j(d/2-1)}
(\norm{F_j^{\rm rem}}_2+\norm{G_j^{\rm rem}}_2)
\notag\\
&\quad+
\sum_{j\ge j_0-1}2^{j(d/2-1)}
(\norm{\mathcal F_j^{\rm rem}}_2+\norm{\mathcal G_j^{\rm rem}}_2)
\le C\cA(t)\cD(t).
\label{63634}
\end{align}
For completeness, the individual product estimates underlying
\eqref{63634} are
\begin{equation}
\norm{u\cdot\nabla u}_{\Bdot^{d/2-1}_{2,1}}
\le C\norm u_{\Bdot^{d/2-1}_{2,1}}
\norm u_{\Bdot^{d/2+1}_{2,1}},
\label{636634}
\end{equation}
which follows from Lemma~\ref{lem:m-product} applied to $\diver(u\otimes u)$, together with the hybrid estimates \eqref{eq:hyb-Q-p-app} and \eqref{eq:hyb-p-app} of Corollary~\ref{cor:critical-h-O}, in which the low-frequency source is measured in the $z$-equation (one derivative being gained through $\Lambda^{-1}$) and the high-frequency source in the $q$-equation. Here $\dot B^{d/2-1,d/2}_{2,1}$ denotes the hybrid stress space \eqref{eq:h-two-in}, and $\norm{\tau}_{\dot B^{d/2-1,d/2}_{2,1}}\le C\cA(t)$, $\norm{u}_{\Bdot^{d/2+1}_{2,1}}\le C\cD(t)$. Together with \eqref{52523533}, these estimates show that the total nonlinear contribution is bounded by $C\cA(t)\cD(t)$.

Finally, set
\begin{equation*} 
\mathscr E_a(t)=
\sum_{j\le j_0}2^{j(d/2-1)}L_j(t)
+C\sum_{j_0<j<j_h}2^{j(d/2-1)}{L_j(t)}
+\sum_{j\ge j_h}2^{j(d/2-1)}H_j(t).
\end{equation*}
\begin{align*} 
	\mathscr D_a(t)=
	 \sum_{j\le j_0}2^{j(\scal+2)}L_j(t)
	+C\sum_{j_0<j<j_h}2^{j(d/2-1)}{L_j(t)}
	+\sum_{j\ge j_h}2^{j(d/2-1)}(2^{2j}\norm{u_j}_2+\norm{q_j}_2).
\end{align*}
 The equivalences \eqref{523239} and \eqref{8544541} give
\eqref{eq:Lyap-equi}.  Summing
\eqref{53764} and \eqref{854454}, then using
\eqref{63634}, proves
\eqref{52523}.  Since
$\sup_t\cA(t)\le C\cX_0$ by \eqref{6364553}, choosing
$\cX_0\le c_0/(2C)$ yields \eqref{525232}.  Thus, the Proposition \ref{prop:act-Lyap} is completed.

\end{proof}

\begin{corollary}\label{cor:high-nonlinear}
Under the assumptions of \cref{thm:known-global}, for any $T>0$,
\begin{align*}
&\norm{u^\high}_{\CL^\infty_T(\Bdot^{d/2-1}_{2,1})}
+\norm{z^\high}_{\CL^\infty_T(\Bdot^{d/2}_{2,1})}
+\norm{u^\high}_{L^1_T(\Bdot^{d/2+1}_{2,1})}
+\norm{z^\high}_{L^1_T(\Bdot^{d/2}_{2,1})}
\notag\\
& \le C\left(
\norm{u_0^\high}_{\Bdot^{d/2-1}_{2,1}}
+\norm{z_0^\high}_{\Bdot^{d/2}_{2,1}}
+\int_0^T\cA(t)\cD(t)\,\dd t
\right)
\le C\cX_0.
\end{align*}
Equivalently, with $q=\Lambda z=\mathbb P\operatorname{div}\tau$,
\begin{align*}
&\norm{u^\high}_{\CL^\infty_T(\Bdot^{d/2-1}_{2,1})}
+\norm{q^\high}_{\CL^\infty_T(\Bdot^{d/2-1}_{2,1})}
+\norm{u^\high}_{L^1_T(\Bdot^{d/2+1}_{2,1})}
+\norm{q^\high}_{L^1_T(\Bdot^{d/2-1}_{2,1})}
\le C\cX_0.
\end{align*}

\end{corollary}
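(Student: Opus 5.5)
The corollary is a restriction of what was already done in the proof of Proposition~\ref{prop:act-Lyap}, applied to the high frequencies only. I will work with the high-frequency block functional $H_j$ of \eqref{57522329} for $j\ge j_h$, and with the functional $L_j$ on the finitely many intermediate shells $j_0-1\le j<j_h$. On those shells Step~2 shows that \eqref{53764} holds with $2^{2j}\simeq1$. This gives, for every $j\ge j_0-1$, a damped block inequality
\[
\frac{\dd}{\dd t}\widetilde H_j+c\bigl(2^{2j}\norm{u_j}_2+\norm{q_j}_2\bigr)\le C\bigl(\norm{\mathcal F_j^{\rm rem}}_2+\norm{\mathcal G_j^{\rm rem}}_2\bigr),
\]
with $\widetilde H_j\simeq\norm{u_j}_2+\norm{q_j}_2$. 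The principal transport terms have already been removed by the cancellations of Step~3.

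The argument then proceeds in three steps.

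\textbf{Step 1.} Integrate the block inequality in time on $[0,t]$ and take $\sup_{t\le T}$ block by block; this is exactly what produces Chemin--Lerner norms. Multiply by $2^{j(d/2-1)}$ and sum over $j\ge j_0-1$. On the high annuli one has $2^{j(d/2-1)}\norm{q_j}_2\simeq2^{jd/2}\norm{z_j}_2$ and $2^{j(d/2-1)}2^{2j}\norm{u_j}_2=2^{j(d/2+1)}\norm{u_j}_2$. The left-hand side therefore becomes the four norms in the statement, and the initial data contribute $\norm{u_0^\high}_{\Bdot^{d/2-1}_{2,1}}+\norm{z_0^\high}_{\Bdot^{d/2}_{2,1}}$.

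\textbf{Step 2.} Control the remainders. By the high-frequency half of \eqref{63634}, which rests on \eqref{52523533}, \eqref{9633} and \eqref{636634},
\[
\sum_{j\ge j_0-1}2^{j(d/2-1)}\bigl(\norm{\mathcal F_j^{\rm rem}}_2+\norm{\mathcal G_j^{\rm rem}}_2\bigr)\le C\cA(t)\cD(t).
\]
Integrating this in time gives the first inequality of the corollary. Equivalently, one can invoke Proposition~\ref{6376976} with forcing measured in $L^1_T$, where only the commutator structure must be respected. This is why I use the block form rather than \eqref{eq:f-h-p2-h} directly.

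\textbf{Step 3.} Close the bound. By \eqref{6364553}, $\sup_t\cA(t)\le C\cX_0$ and $\int_0^\infty\cD\,\dd t\le C\cX_0$. The latter holds because $\cD$ is dominated by the $L^1_t$ terms of $\cX(t)$, with $\norm{z^\high}_{L^1_t(\Bdot^{d/2}_{2,1})}$ included there. Hence $\int_0^T\cA\cD\,\dd t\le C\cX_0^2\le C\cX_0$. The initial data are bounded by $C\cX_0$ because $\cT$ has order zero, so $\norm{z_0^\high}_{\Bdot^{d/2}_{2,1}}\lesssim\norm{\tau_0^\high}_{\Bdot^{d/2}_{2,1}}$; the only frequency mixing comes from the boundary shell $j_0-1$, and that shell is harmless. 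The $q$-formulation follows from $q=\Lambda z$ and the annulus equivalence.

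The main obstacle is ensuring that the transport terms, which do not commute with the projection onto high frequencies, do not cost a derivative on $q$. This is exactly why the commutator form $[\dyad_j,u\cdot\nabla]q$ together with the $L^2$ cancellation $\ip{u\cdot\nabla q_j}{q_j}=0$ must be used at the block level before summation. A secondary bookkeeping point is the treatment of the boundary shells between $j_0$ and $j_h$, which is handled as in Step~2 of Proposition~\ref{prop:act-Lyap}.
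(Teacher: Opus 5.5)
Your proposal is correct and is essentially the argument the paper leaves implicit. The corollary is read off by integrating the nonlinear block inequalities \eqref{854454} (for $j\ge j_h$) and \eqref{53764} (on the intermediate shells) in time block by block, summing with weight $2^{j(d/2-1)}$, bounding the commutator remainders by \eqref{63634}, and closing with \eqref{6364553}; your remark that \eqref{eq:f-h-p2-h} cannot be applied verbatim, because $u\cdot\nabla q$ loses a derivative unless the transport cancellation is done at the block level, is the right reason to work with the block functionals of Proposition~\ref{prop:act-Lyap}.
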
 



\section{{Upper decay for nonlinear system}} \label{sec:non-a}
\subsection{The exact nonlinear active system}

It is more convenient to rewrite the nonlinear system \eqref{eq:nonlinear-1} as
\begin{equation}\label{5223}
\partial_tU=\cL U+\cN(u,\tau),
\end{equation}
where $U=(u,z)$, $\cL$ is the operator in \eqref{64238682} and
\begin{equation}\label{42222}
\cN(u,\tau)=
\begin{pmatrix}
-\Pj(u\cdot\nabla u)\\[2mm]
-\cT(u\cdot\nabla\tau)-\cT Q(\tau,\nabla u)
\end{pmatrix}.
\end{equation}



\subsection{Estimates for nonlinear terms}

Recall the instantaneous critical amplitude $\cA(t)$ from
\eqref{eq:criti-11}.  The global estimate \eqref{6364553} implies
\begin{equation*} 
		\tilde{\cA}_T:=\norm{(u,\tau)^\low}_{\CL^\infty_T(\Bdot^{d/2-1}_{2,1})}
		+\norm{u^\high}_{\CL^\infty_T(\Bdot^{d/2-1}_{2,1})}
		+\norm{\tau^\high}_{\CL^\infty_T(\Bdot^{d/2}_{2,1})}\le C\cX_0
		\qquad\text{for every }T>0.
\end{equation*}
For $-d/2<s<d/2-1$, set
\begin{equation}\label{eq:hybrid-Ds}
	\cD_s(t):=
	\norm{U^\low(t)}_{\Bdot^{s+2}_{2,\infty}}
	+\norm{u^\high(t)}_{\Bdot^{d/2+1}_{2,1}}
	+\norm{z^\high(t)}_{\Bdot^{d/2}_{2,1}}.
\end{equation}

\begin{proposition}\label{636543}
	Assume $d\ge2$ and let $(u,\tau)$ be the global solution established in Theorem \ref{thm:known-global}.
	Let $-d/2<s<d/2-1$. The nonlinear term  \eqref{42222} satisfies
	\begin{equation}\label{eq:hy-low-s}
		\norm{\cN(u,\tau)^\low}_{\Bdot^s_{2,\infty}}
		\le C\cA(t)\cD_s(t).
	\end{equation}
	{Moreover}, for any $T>0$,
	\begin{align}
		\norm{\cN(u,\tau)^\low}_{\CL^1_T(\Bdot^s_{2,\infty})}
		\le{}&C\tilde{\cA}_T
		\Bigl(
		\norm{U^\low}_{\CL^1_T(\Bdot^{s+2}_{2,\infty})}	+\norm{u^\high}_{L^1_T(\Bdot^{d/2+1}_{2,1})}
		+\norm{z^\high}_{L^1_T(\Bdot^{d/2}_{2,1})}
		\Bigr).
		\label{eq:hy-low-time}
	\end{align}
\end{proposition}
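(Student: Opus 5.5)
Our plan is to estimate each component of $\cN(u,\tau)$ separately, at low frequency only, by splitting every factor into its low- and high-frequency parts and then applying Bony's decomposition together with the standard product law
\[
\norm{fg}_{\Bdot^{s}_{2,\infty}}\lesssim \norm f_{\Bdot^{d/2}_{2,1}}\norm g_{\Bdot^{s}_{2,\infty}},\qquad -\tfrac d2<s\le \tfrac d2 .
\]
This law holds exactly in the stated range $-d/2<s$; we will use it in its sharper form in which the remainder is controlled by $\norm f_{\Bdot^{d/2}_{2,1}}\norm g_{\Bdot^{s}_{2,\infty}}$ as long as $s+d/2>0$. Since $\Pj$ and $\cT$ are order-zero Fourier multipliers, they are harmless on each dyadic block. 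The time-integrated bound \eqref{eq:hy-low-time} will then follow from the pointwise-in-time dyadic version, once we replace $\sup_j\int$ by the Chemin--Lerner norm and use $\cA(t)\le\tilde\cA_T$ inside each block. Concretely, we will prove the blockwise estimate $2^{js}\norm{\dyad_j\cN}_2\lesssim\cA(t)\cD_s(t)$ (up to an $\ell^\infty$ sequence), and then integrate in time before taking the supremum over $j$.

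For the velocity term we write $\Pj(u\cdot\nabla u)=\Pj\diver(u\otimes u)$. It then suffices to bound $\norm{(u\otimes u)^\low}_{\Bdot^{s+1}_{2,\infty}}$, and we pair one factor in $\Bdot^{d/2}_{2,1}$ with the other in $\Bdot^{s+1}_{2,\infty}$. We use the interpolation $\norm{u^\low}_{\Bdot^{d/2}_{2,1}}\lesssim\norm{u^\low}_{\Bdot^{d/2-1}_{2,1}}^{\theta}\norm{u^\low}_{\Bdot^{s+2}_{2,\infty}}^{1-\theta}$, or more simply the low-frequency embedding: because $s+2<d/2+1$, we have $\norm{u^\low}_{\Bdot^{d/2+1}_{2,1}}\lesssim\norm{u^\low}_{\Bdot^{s+2}_{2,\infty}}$. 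The high-frequency pieces are controlled by $\norm{u^\high}_{\Bdot^{d/2+1}_{2,1}}$, and at low output frequency any positive regularity can be traded downward. In short, every term has the form (critical amplitude)$\times$(dissipative norm). The paraproduct $T_{\nabla u}u$ and the remainder require $s+1>-d/2$, which is where the lower bound $s>-d/2$ is used. For $s<d/2-1$ the needed index shifts stay within the admissible ranges.

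For the stress terms we use the divergence structure. Since $\diver u=0$, we have $\cT(u\cdot\nabla\tau)=\Lambda^{-1}\Pj\diver\diver(u\otimes\tau)$, which is a zero-order operator times $\nabla(u\otimes\tau)$. Likewise $Q(\tau,\nabla u)$ is a sum of products $\tau\,\nabla u$. So at low frequency we must bound $\norm{(u\otimes\tau)^\low}_{\Bdot^{s+1}_{2,\infty}}$ and $\norm{(\tau\nabla u)^\low}_{\Bdot^{s}_{2,\infty}}$. In both cases we put $\tau$ in the critical space. We use $\norm{\tau^\low}_{\Bdot^{d/2-1}_{2,1}}+\norm{\tau^\high}_{\Bdot^{d/2}_{2,1}}\le\cA(t)$, which embeds $\tau$ into $\Bdot^{d/2-1}_{2,1}\cap\Bdot^{d/2}_{2,1}$ at high frequency. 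The velocity factor $u$ (respectively $\nabla u$) carries the dissipation $\Bdot^{s+2}_{2,\infty}$ (respectively $\Bdot^{s+1}_{2,\infty}$) at low frequency and $\Bdot^{d/2+1}_{2,1}$ at high frequency. The key point is that $\tau$ itself never needs a dissipative norm: only $u$ supplies $\cD_s$, which is consistent with the passive stress entering only as a coefficient.

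The main obstacle is the low-frequency part of $\tau$, which sits only in $\Bdot^{d/2-1}_{2,1}$ and therefore is not in $\Bdot^{d/2}_{2,1}$, so the product law cannot be applied directly with $\tau^\low$ as the multiplier. We will handle it by a Bony decomposition. In $T_{\tau^\low}\nabla u$ we use $\norm{\dot S_{k-1}\tau^\low}_\infty\lesssim2^{k\cdot0}\norm{\tau^\low}_{\Bdot^{d/2}_{2,1}}$; this is replaced by $\lesssim2^{k}\norm{\tau^\low}_{\Bdot^{d/2-1}_{2,1}}$, and the extra $2^{k}$ is absorbed by giving $\nabla u$ one more derivative, i.e. $u^\low\in\Bdot^{s+2}_{2,\infty}$ exactly. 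In $T_{\nabla u}\tau^\low$ and in the remainder, $\nabla u\in\Bdot^{d/2}_{2,1}$ is bounded by $\cD_s$ through the low-frequency embedding $s+2<d/2+1$ and through the high-frequency norm. The remainder additionally needs $s+d/2>0$ to sum. We will check each case carefully; if the $\ell^\infty$ index fails to sum in one paraproduct, we will fall back on the $\ell^1$ critical norm of $\tau$ to absorb the sum.
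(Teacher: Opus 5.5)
\textbf{There is a genuine gap, in the transport stress term $\cT(u\cdot\nabla\tau)$.}

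The velocity term and $\cT Q(\tau,\nabla u)$ can be closed along the lines you indicate. For $Q$ this is Lemma~\ref{lem:bal-p-1} with $\tau\in\Bdot^{d/2-1}_{2,1}$ and $\nabla u\in\Bdot^{s+1}_{2,\infty}$. For $u\otimes u$, symmetry lets you always put the dissipative norm on the higher-frequency factor.

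The transport stress term is different. Writing $\cT(u\cdot\nabla\tau)=\Lambda^{-1}\Pj\diver\diver(u\otimes\tau)$ reduces you to the bound $\norm{(u\otimes\tau)^\low}_{\Bdot^{s+1}_{2,\infty}}\lesssim\cA(t)\cD_s(t)$. This bound is false for $s\ge d/2-2$, which is a nontrivial part of the admissible range, and all of it when $d=2$.

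In the low--high piece $\dot S_{j-1}u\,\dyad_j\tau$ the output frequency sits on $\tau$, which has only critical norms. So the dissipation must come from the low-frequency factor. That requires $\norm{\dot S_{j-1}u}_{L^\infty}\lesssim 2^{j(d/2-s-2)}\norm u_{\Bdot^{s+2}_{2,\infty}}$, i.e.\ $s+2<d/2$.

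Here is a concrete counterexample. Take a single block $\tau=\dyad_j\tau\in\cK$ (so $z=0$) and $u$ a single concentrated block at frequency $2^m$, with $m\ll j\le j_0$. Then the ratio $2^{j(s+1)}\norm{u\,\tau}_2/(\cA\cD_s)$ is of size $2^{(j-m)(s+2-d/2)}$, which is unbounded when $s>d/2-2$. Yet for this $\tau$ the true nonlinearity is $\cT(u\cdot\nabla\tau)=[\cT,u\cdot\nabla]\tau$. This is of size $2^{-j}\norm{\nabla u}_{L^\infty}\norm{\nabla\tau}_2$ and harmless. The divergence form, or any treatment of $u\otimes\tau$ by product laws, throws away exactly the structure that makes the term small.

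\textbf{The missing idea} is the paper's splitting $\cT(u\cdot\nabla\tau)=u\cdot\nabla z+[\cT,u\cdot\nabla]\tau$.

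In $u\cdot\nabla z$ the high-frequency factor is $z$, which does carry the dissipative norms in $\cD_s$. This is why $\cD_s$ contains $z$ at all, so your claim that only $u$ supplies $\cD_s$ is wrong for this term. The piece $z^\high$ is then handled by a separate Bony argument at low output frequency.

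In the commutator, the first-moment bound $\int|y|\,|K_k(y)|\,\dd y\lesssim2^{-k}$ for the localized kernel of $\cT$ transfers the derivative from $\tau$ to $u$ (Lemma~\ref{lem:zero-o-c}). One then needs only $\norm{\nabla\dot S_{k-1}u}_{L^\infty}\lesssim2^{k(d/2-1-s)}\norm u_{\Bdot^{s+2}_{2,\infty}}$, which is valid for $s<d/2-1$.

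This same localized bound is also what you need for $\dot T_{\nabla u}\tau^\low$ in the $Q$ term. The bound $\norm{\nabla u}_{L^\infty}\lesssim\cD_s$ you propose does not suffice there, because $\sup_{j\le j_0}2^{js}\norm{\dyad_j\tau}_2$ is not controlled by $\cA$.

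Finally, the obstacle you single out is not one. At low frequencies $\norm{\tau^\low}_{\Bdot^{d/2}_{2,1}}\le2^{j_0}\norm{\tau^\low}_{\Bdot^{d/2-1}_{2,1}}$, so $\norm{\tau}_{\Bdot^{d/2}_{2,1}}\lesssim\cA(t)$. The real difficulty is that $\tau$ has no dissipative norm while the transport derivative falls on it.
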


\begin{proof}
	We use the decomposition $\cN=(\cN_1,\cN_2)^T$ with
	\begin{equation*} 
		\cN_1=-\Pj(u\cdot\nabla u),
		\qquad
		\cN_2=-u\cdot\nabla z-[\cT,u\cdot\nabla]\tau-\cT Q(\tau,\nabla u).
	\end{equation*}
	For the velocity nonlinearity, using Lemma \ref{lem:bal-p-1} and the high-frequency embedding 
	\begin{equation*} 
		\norm{u^\high}_{\Bdot^{s+2}_{2,\infty}}
		\le C\norm{u^\high}_{\Bdot^{d/2+1}_{2,1}}
	\end{equation*}
	imply that
	\begin{align}
		\norm{\cN_1^\low}_{\Bdot^s_{2,\infty}}
		&\le C\norm u_{\Bdot^{d/2-1}_{2,1}}
		\norm u_{\Bdot^{s+2}_{2,\infty}}
		\le C\cA(t)\cD_s(t).
		\label{eq:N1-low-bound}
	\end{align}
	Here and below the full critical norms are in the low/high sense of \cref{thm:known-global}.
	
We next estimate $u\cdot\nabla z$. Consider the split
\[
u\cdot\nabla z
=u\cdot\nabla z^\low+u\cdot\nabla z^\high.
\]
By Lemma~\ref{lem:bal-p-1},
\begin{equation}\label{eq:uz-lowz}
\norm{(u\cdot\nabla z^\low)^\low}_{\Bdot^s_{2,\infty}}
\le C\norm{u}_{\Bdot^{s_c}_{2,1}}
       \norm{z^\low}_{\Bdot^{s+2}_{2,\infty}}.
\end{equation}

For the second term, Bony's decomposition gives, componentwise,
\[
u\cdot\nabla z^\high
=\dot T_u\nabla z^\high
 +\dot T_{\nabla z^\high}u
 +\dot R(u,\nabla z^\high).
\]
After low-frequency projection, only finitely many dyadic
blocks near $j_0$ contribute to either paraproduct.
Using Bernstein's inequality and
\[
\norm{\dot S_{k-1}u}_{L^\infty}
\le C2^k\norm{u}_{\Bdot^{s_c}_{2,1}},
\qquad
\norm{\dot S_{k-1}\nabla z^\high}_{L^\infty}
\le C2^k\norm{z^\high}_{\Bdot^{d/2}_{2,1}},
\]
we obtain
\begin{equation}\label{634634657}
\begin{aligned}
&\norm{(\dot T_u\nabla z^\high)^\low}_{\Bdot^s_{2,\infty}}
+\norm{(\dot T_{\nabla z^\high}u)^\low}_{\Bdot^s_{2,\infty}} \le
C\norm{u}_{\Bdot^{s_c}_{2,1}}
 \norm{z^\high}_{\Bdot^{d/2}_{2,1}}.
\end{aligned}
\end{equation}

For the remainder, the Fourier support properties and
Bernstein's inequality imply, for $j\le j_0$,
\[
\begin{aligned}
2^{js}\norm{\dyad_j\dot R(u,\nabla z^\high)}_{L^2}
&\le C2^{j(s+d/2)}
\sum_{k\ge\max\{j-N_0,j_0-N_0\}}
2^k\norm{\dyad_k u}_{L^2}
    \norm{\widetilde{\dyad}_k z^\high}_{L^2},
\end{aligned}
\]
where $N_0$ is a fixed integer determined by the dyadic
decomposition. Set
\[
a_k:=2^{ks_c}\norm{\dyad_k u}_{L^2},
\qquad
b_k:=2^{kd/2}
     \norm{\widetilde{\dyad}_k z^\high}_{L^2}.
\]
Since $d\ge2$ and $k\ge j_0-N_0$,
\[
2^k\norm{\dyad_k u}_{L^2}
    \norm{\widetilde{\dyad}_k z^\high}_{L^2}
=2^{k(2-d)}a_kb_k
\le Ca_kb_k.
\]
Therefore,
\[
2^{js}\norm{\dyad_j\dot R(u,\nabla z^\high)}_{L^2}
\le C2^{j(s+d/2)}
\norm{u}_{\Bdot^{s_c}_{2,1}}
\norm{z^\high}_{\Bdot^{d/2}_{2,1}}.
\]
Taking the supremum over $j\le j_0$ and using $s+d/2>0$,
then combining this estimate with
\eqref{634634657}, yields
\begin{equation}\label{eq:uz-high-bound}
\norm{(u\cdot\nabla z^\high)^\low}_{\Bdot^s_{2,\infty}}
\le C\norm{u}_{\Bdot^{s_c}_{2,1}}
       \norm{z^\high}_{\Bdot^{d/2}_{2,1}}.
\end{equation}
Combining \eqref{eq:uz-lowz} and \eqref{eq:uz-high-bound},
we conclude that
\begin{equation}\label{eq:uz-final}
\begin{aligned}
\norm{(u\cdot\nabla z)^\low}_{\Bdot^s_{2,\infty}}
&\le C\norm{u}_{\Bdot^{s_c}_{2,1}}
\left(
\norm{z^\low}_{\Bdot^{s+2}_{2,\infty}}
+\norm{z^\high}_{\Bdot^{d/2}_{2,1}}
\right)\\
&\le C\cA(t)\cD_s(t).
\end{aligned}
\end{equation}

	For the commutator, applying Lemma \ref{lem:zero-o-c} gives
	\begin{align}
		\norm{[\cT,u\cdot\nabla]\tau}_{\Bdot^s_{2,\infty}}
		&\le C\norm\tau_{\Bdot^{d/2-1}_{2,1}}
		\norm u_{\Bdot^{s+2}_{2,\infty}}
		\le C\cA(t)\cD_s(t),
		\label{eq:Tcomm-final}
	\end{align}
	here we used
	\begin{equation*} 
		\norm\tau_{\Bdot^{d/2-1}_{2,1}}
		\le C\left(
		\norm{\tau^\low}_{\Bdot^{d/2-1}_{2,1}}
		+\norm{\tau^\high}_{\Bdot^{d/2}_{2,1}}
		\right)
		\le C\cA(t).
	\end{equation*}
	Finally, using \cref{lem:bal-p-1} implies
	\begin{equation}\label{eq:Q-final}
		\norm{\cT Q(\tau,\nabla u)}_{\Bdot^s_{2,\infty}}
		\le C\norm\tau_{\Bdot^{d/2-1}_{2,1}}
		\norm u_{\Bdot^{s+2}_{2,\infty}}
		\le C\cA(t)\cD_s(t).
	\end{equation}
Combining \eqref{eq:N1-low-bound}, \eqref{eq:uz-final},
\eqref{eq:Tcomm-final}, and \eqref{eq:Q-final}
proves \eqref{eq:hy-low-s}.

To prove \eqref{eq:hy-low-time}, we apply H\"older's inequality
in time to each dyadic interaction before summing over
frequencies. The critical factor is measured in $L^\infty_T$
and the dissipative factor in $L^1_T$. For example,
\[
\begin{aligned}
&\norm{\dyad_k u\cdot
\nabla\widetilde{\dyad}_k z^\high}_{L^1_T(L^1)}\le
C2^k\norm{\dyad_k u}_{L^\infty_T(L^2)}
     \norm{\widetilde{\dyad}_k z^\high}_{L^1_T(L^2)}.
\end{aligned}
\]
The same dyadic summations as above, together with the
corresponding time-dependent product and commutator
estimates, give
\[
\begin{aligned}
\norm{\cN(u,\tau)^\low}_{\CL^1_T(\Bdot^s_{2,\infty})}
\le{}&
C\left(
\norm{u}_{\CL^\infty_T(\Bdot^{s_c}_{2,1})}
+\norm{\tau}_{\CL^\infty_T(\Bdot^{s_c}_{2,1})}
\right)
\norm{u}_{\CL^1_T(\Bdot^{s+2}_{2,\infty})}\\
&+
C\norm{u}_{\CL^\infty_T(\Bdot^{s_c}_{2,1})}
\left(
\norm{z^\low}_{\CL^1_T(\Bdot^{s+2}_{2,\infty})}
+\norm{z^\high}_{L^1_T(\Bdot^{d/2}_{2,1})}
\right).
\end{aligned}
\]
Since the critical norms are bounded by $C\tilde{\cA}_T$, we have
\[
\norm{u}_{\CL^1_T(\Bdot^{s+2}_{2,\infty})}
\le C\left(
\norm{u^\low}_{\CL^1_T(\Bdot^{s+2}_{2,\infty})}
+\norm{u^\high}_{L^1_T(\Bdot^{d/2+1}_{2,1})}
\right).
\]
This proves \eqref{eq:hy-low-time}, with a constant
independent of $T$.
\end{proof}

\subsection{Uniform propagation}
We now give the propagation of active negative regularity.
\begin{theorem}\label{thm:negative-p}
Assume $d\ge2$ and let $(u,\tau)$ be the global solution established in Theorem \ref{thm:known-global}. Recall that $U=(u,z)$ with $z=\cT\tau$. Assume
\begin{equation*} 
-\frac d2<s<\frac d2-1,
\qquad
U_0^\low\in\Bdot^{s}_{2,\infty}.
\end{equation*}
Then for any $T>0$,
\begin{align}
&\norm{U^\low}_{\CL^\infty_T(\Bdot^s_{2,\infty})}
+\norm{U^\low}_{\CL^1_T(\Bdot^{s+2}_{2,\infty})}\le C\left(
\norm{U_0^\low}_{\Bdot^s_{2,\infty}}+\cX_0
\right),
\label{eq:neg-prop}
\end{align}
where the constant $C$ is independent of $T$.
\end{theorem}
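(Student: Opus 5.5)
The plan is to combine the low-frequency linear estimate of Lemma~\ref{lem:forced-low} with the nonlinear bound \eqref{eq:hy-low-time} of Proposition~\ref{636543}, and absorb the resulting quadratic term using the smallness of $\tilde\cA_T\le C\cX_0$.

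First, write the active system as \eqref{5223}, i.e.\ as \eqref{eq:forced-linear} with $\mathcal F=\cN(u,\tau)$. Applying Lemma~\ref{lem:forced-low} with $\varrho=\infty$ and index $s$ gives
\[
\norm{U^\low}_{\CL^\infty_T(\Bdot^s_{2,\infty})}
+\norm{U^\low}_{\CL^1_T(\Bdot^{s+2}_{2,\infty})}
\le C\Bigl(\norm{U_0^\low}_{\Bdot^s_{2,\infty}}
+\norm{\cN(u,\tau)^\low}_{\CL^1_T(\Bdot^s_{2,\infty})}\Bigr),
\]
with $C$ independent of $T$.

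Second, insert \eqref{eq:hy-low-time} to bound the source by
\[
C\tilde\cA_T\Bigl(\norm{U^\low}_{\CL^1_T(\Bdot^{s+2}_{2,\infty})}
+\norm{u^\high}_{L^1_T(\Bdot^{d/2+1}_{2,1})}
+\norm{z^\high}_{L^1_T(\Bdot^{d/2}_{2,1})}\Bigr).
\]
The high-frequency dissipative terms are bounded by $C\cX_0$ uniformly in $T$, by Corollary~\ref{cor:high-nonlinear} (or directly by \eqref{6364553}). Since $\tilde\cA_T\le C\cX_0\le C\varepsilon_0$, after decreasing $\varepsilon_0$ so that $C^2\varepsilon_0\le\frac12$, the term $C\tilde\cA_T\norm{U^\low}_{\CL^1_T(\Bdot^{s+2}_{2,\infty})}$ is absorbed into the left-hand side. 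The high-frequency contribution becomes $C\cX_0^2\le C\cX_0$, and this yields \eqref{eq:neg-prop}.

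The main obstacle is justifying the absorption: we need a priori that $\norm{U^\low}_{\CL^1_T(\Bdot^{s+2}_{2,\infty})}<\infty$ for each finite $T$.

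\begin{itemize}
\item When $s+2\le d/2+1$, this follows since low frequencies with a higher index are controlled: for $j\le j_0$, $2^{j(s+2)}\le C2^{j(d/2+1)}$ requires $s\le d/2-1$, which holds. Hence the quantity is bounded by $\norm{U^\low}_{L^1_T(\Bdot^{d/2+1}_{2,1})}\le C\cX_0$ from \eqref{6364553}.
\item The $\CL^\infty_T(\Bdot^s_{2,\infty})$ norm is then finite a posteriori from the linear estimate itself, since its right-hand side is finite.
\end{itemize}

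So the absorption is legitimate for every $T$, and the constant stays independent of $T$. The only remaining care is that \eqref{eq:hy-low-time} uses the Chemin--Lerner norm on the low-frequency part consistently with Lemma~\ref{lem:forced-low}. This matching should be checked, but it is already built into both statements.
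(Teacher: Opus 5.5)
Your main line is exactly the paper's: Lemma~\ref{lem:forced-low} with $\varrho=\infty$, then \eqref{eq:hy-low-time}, the high-frequency dissipation bounded by $C\cX_0$ via \eqref{6364553}, and absorption of $C\cX_0\norm{U^\low}_{\CL^1_T(\Bdot^{s+2}_{2,\infty})}$ into the left-hand side.

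\textbf{The gap.} It sits in the step you singled out yourself, the a priori finiteness of $\norm{U^\low}_{\CL^1_T(\Bdot^{s+2}_{2,\infty})}$. Your inequality $2^{j(s+2)}\le C2^{j(d/2+1)}$ for $j\le j_0$ is reversed. Since $j_0<0$ and $j\to-\infty$, the ratio $2^{j(s+2)}/2^{j(d/2+1)}=2^{-j(d/2-1-s)}$ is unbounded precisely because $s<d/2-1$; the inequality would need $s\ge d/2-1$. At low frequencies the \emph{smaller} index gives the \emph{stronger} norm. So $\norm{U^\low}_{L^1_T(\Bdot^{d/2+1}_{2,1})}\le C\cX_0$ gives no control of $\norm{U^\low}_{\CL^1_T(\Bdot^{s+2}_{2,\infty})}$. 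The finiteness of the latter is essentially what the theorem asserts, so as written your absorption is circular.

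\textbf{How to close it.} The paper handles this in a footnote by running the argument on Friedrichs approximations. Their solutions have only finitely many low dyadic blocks, so every low-frequency norm is finite; one then passes to the limit using the uniform bounds.

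An alternative within your own framework is a descent in steps of length less than $2$. For $s'\in[\scal-2,\scal)$ and each finite $T$,
\[
2^{j(s'+2)}\norm{\dyad_jU}_{L^1_TL^2}\le T\,2^{j(s'+2-\scal)}\,2^{j\scal}\norm{\dyad_jU}_{L^\infty_TL^2}\le CT\cX_0,\qquad j\le j_0 .
\]
The quantity is therefore finite, with a $T$-dependent bound, so the absorption is legitimate. It yields the $T$-independent estimate at level $s'$; note that $U_0^\low\in\Bdot^{s'}_{2,\infty}$ because $s<s'$. Then repeat with $s''\in[s'-2,s')$, using the just-proved $\CL^\infty_T(\Bdot^{s'}_{2,\infty})$ bound in place of the critical one. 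After finitely many steps you reach $s$. The smallness threshold then depends on finitely many intermediate indices, which is harmless.
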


\begin{proof}
Applying the forced low-frequency estimate \eqref{5625222} to \eqref{5223} to give
\begin{align}
&\norm{U^\low}_{\CL^\infty_T(\Bdot^s_{2,\infty})}
+\norm{U^\low}_{\CL^1_T(\Bdot^{s+2}_{2,\infty})}\le C\norm{U_0^\low}_{\Bdot^s_{2,\infty}}
+C\norm{\cN(u,\tau)^\low}_{\CL^1_T(\Bdot^s_{2,\infty})}.
\label{eq:neg-source}
\end{align}
By \eqref{eq:hy-low-time},
\begin{align}
\norm{\cN^\low}_{\CL^1_T(\Bdot^s_{2,\infty})}
\le{}&C\cX_0
\norm{U^\low}_{\CL^1_T(\Bdot^{s+2}_{2,\infty})}+C\cX_0\left(
\norm{u^\high}_{L^1_T(\Bdot^{d/2+1}_{2,1})}
+\norm{z^\high}_{L^1_T(\Bdot^{d/2}_{2,1})}
\right).
\label{eq:sou-t-insert}
\end{align}

The last term is bounded by $C\cX_0$ through \eqref{6364553}. Choose $\cX_0$ sufficiently small so that the first term on the right-hand side of \eqref{eq:sou-t-insert} is absorbed by the left-hand side of \eqref{eq:neg-source}.\footnote{As usual, the absorption is formal as long as the finiteness of $\norm{U^\low}_{\CL^1_T(\Bdot^{s+2}_{2,\infty})}$ is not known a priori. It is rigorously justified by carrying out the argument on the Friedrichs approximations of \eqref{eq:GOB} (for which all low-frequency norms are finite on every bounded time interval, the data having finitely many dyadic blocks) and passing to the limit using the uniform bounds; we omit these routine details.} This yields
\[
\norm{U^\low}_{\CL^\infty_T(\Bdot^s_{2,\infty})}
+\norm{U^\low}_{\CL^1_T(\Bdot^{s+2}_{2,\infty})}
\le C\norm{U_0^\low}_{\Bdot^s_{2,\infty}}+C\cX_0^2.
\]
Since $\cX_0\le\varepsilon_0<1$, the term $\cX_0^2$ is bounded by $\cX_0$, proving \eqref{eq:neg-prop} uniformly in $T$.
\end{proof}

\subsection{Upper-bound decay estimates}

Inspired by the work of Xin and Xu \cite{Xin-Xu-2021}, we now establish the optimal upper decay of the nonlinear active pair.
\begin{theorem} \label{thm:nonlinear-upper}
Under the assumptions of \cref{thm:negative-p}, for any
\begin{equation*} 
s<\sigma\le\frac d2-1,
\end{equation*}
one has
\begin{equation}\label{eq:non-upper}
\norm{U(t)}_{\cH^{\sigma}}
\le C\braket t^{-\frac12(\sigma-s)}.
\end{equation}
In particular, at the critical active level,
\begin{equation}\label{eq:cri-decay}
\cE(t)
\le C\braket t^{-\frac12(d/2-1-s)}.
\end{equation}
The high-frequency terms appearing in the hybrid norm satisfy explicitly
\begin{equation}\label{eq:non-high-d}
\norm{u^\high(t)}_{\Bdot^{d/2-1}_{2,1}}
+\norm{z^\high(t)}_{\Bdot^{d/2}_{2,1}}
\le C\braket t^{-\frac12(d/2-1-s)}.
\end{equation}
\end{theorem}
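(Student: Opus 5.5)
We plan to follow the Lyapunov-interpolation scheme of Xin and Xu, working with the functional $\mathscr E_a$ from Proposition~\ref{prop:act-Lyap}.

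\textbf{Step 1: an interpolation inequality.} By \eqref{525232}, $\frac{\dd}{\dd t}\mathscr E_a+\frac{c_0}{2}\mathscr D_a\le0$ with $\mathscr E_a\simeq\cE$ and $\mathscr D_a\simeq\cD$. The high-frequency parts of $\cE$ and $\cD$ are comparable, because $2^{2j}\ge c$ there and $z^\high$ has the same norm in both. For the low frequencies we interpolate between the propagated negative norm of Theorem~\ref{thm:negative-p} and the dissipative norm $\Bdot^{d/2+1}_{2,1}$. Set $\theta=\frac{2}{d/2+1-s}$. Then $d/2-1=\theta(d/2+1)\cdot\frac{d/2-1-s}{2}\cdot\frac{1}{\theta}\ldots$; more precisely, the standard real interpolation in Besov spaces with $\ell^\infty\to\ell^1$ gain, valid since the indices are strictly separated, gives
\[
\norm{U^\low}_{\Bdot^{d/2-1}_{2,1}}\le C\norm{U^\low}_{\Bdot^{s}_{2,\infty}}^{1-\eta}\norm{U^\low}_{\Bdot^{d/2+1}_{2,1}}^{\eta},\qquad \eta=\frac{d/2-1-s}{d/2+1-s}\in(0,1).
\]
The propagated norm is bounded by $C_1:=C(\norm{U_0^\low}_{\Bdot^s_{2,\infty}}+\cX_0)$ uniformly in time. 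Since $\cE\le C$ is small, high-frequency terms satisfy $\cE^\high\lesssim\cD^\high\lesssim (\cD^\high)^\eta$. Hence $\cD\ge c\,C_1^{-(1-\eta)/\eta}\cE^{1/\eta}$, and therefore
\[
\frac{\dd}{\dd t}\mathscr E_a+c\,\mathscr E_a^{1+\frac{2}{d/2-1-s}}\le0,
\]
because $1/\eta-1=2/(d/2-1-s)$.

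\textbf{Step 2: critical decay.} Integrating this ODE inequality gives $\mathscr E_a(t)\le C(1+t)^{-(d/2-1-s)/2}$. This yields \eqref{eq:cri-decay}, and \eqref{eq:non-high-d} follows since the high part is contained in $\cE$. This settles $\sigma=\frac d2-1$.

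\textbf{Step 3: intermediate $\sigma$.} For $s<\sigma<d/2-1$ we interpolate again, now between $\Bdot^s_{2,\infty}$ and $\Bdot^{d/2-1}_{2,1}$ restricted to low frequencies:
\[
\norm{U^\low}_{\Bdot^\sigma_{2,1}}\le C\norm{U^\low}_{\Bdot^s_{2,\infty}}^{1-\mu}\norm{U^\low}_{\Bdot^{d/2-1}_{2,1}}^{\mu},\qquad \mu=\frac{\sigma-s}{d/2-1-s}.
\]
Inserting the bounds from Step 2 gives $\langle t\rangle^{-(\sigma-s)/2}$. The high-frequency part of $\cH^\sigma$ is the critical one, which decays at the faster rate $\langle t\rangle^{-(d/2-1-s)/2}$. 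Together these give \eqref{eq:non-upper}.

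\textbf{Main obstacle.} The main difficulty is Step 1. One must check that the interpolation with $\ell^\infty$ on the negative side yields the $\ell^1$ critical norm; this holds since $s<d/2-1<d/2+1$ strictly. One must also handle the low/high coupling near $j_0$ correctly. Finally, the Lyapunov functional must be used rather than $\cE$ directly, with the equivalence constants tracked. For $t\le1$ the bound is trivial from $\cE\le C\cX_0$.
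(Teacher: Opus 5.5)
Your proposal is correct and follows the paper's proof essentially step for step: you interpolate $\norm{U^\low}_{\Bdot^{d/2-1}_{2,1}}$ between the propagated $\Bdot^s_{2,\infty}$ bound and $\Bdot^{d/2+1}_{2,1}$, derive the coercivity $\mathscr D_a\gtrsim\mathscr E_a^{1+2/(d/2-1-s)}$, integrate the differential inequality, and interpolate again for $\sigma<d/2-1$. The one slip is in the high-frequency coercivity, where $\cD^\high\lesssim(\cD^\high)^\eta$ would need $\cD^\high\lesssim1$ pointwise in time, which is not available (only $\int_0^\infty\cD\,\dd t$ is bounded); argue instead, as the paper does, that $(\cE^\high)^{1/\eta}\le\cE^\high\lesssim\cD^\high$ because $\cE\le C\varepsilon_0<1$, and normalize the propagated constant to be $\ge1$ so that the low- and high-frequency lower bounds combine with a single constant.
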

 
\begin{proof}
 
Set
\begin{equation*}\label{eq:alpha0}
\alpha_0:=\frac12\left(\frac d2-1-s\right)>0.
\end{equation*}
By \cref{thm:negative-p},
\begin{equation*}
M_s:=1+\sup_{t\ge0}\norm{U^\low(t)}_{\Bdot^s_{2,\infty}}<\infty.
\end{equation*}

Interpolating the low-frequency norm between the indices $s$ and
$\scal+2$ at the target index $\scal$ implies

\begin{equation}\label{63344}
\norm{U^\low}_{\Bdot^{d/2-1}_{2,1}}
\le C M_s^{\theta_0}
\norm{U^\low}_{\Bdot^{d/2+1}_{2,1}}^{1-\theta_0},
\end{equation}
with the interpolation exponent 
\begin{equation*} 
	\theta_0=\frac{2}{d/2+1-s},
	\qquad
	1-\theta_0=\frac{d/2-1-s}{d/2+1-s}.
\end{equation*}
We then get
\begin{equation}\label{6334432}
\norm{U^\low}_{\Bdot^{d/2+1}_{2,1}}
\ge C M_s^{-\theta_0/(1-\theta_0)}
\norm{U^\low}_{\Bdot^{d/2-1}_{2,1}}^{1/(1-\theta_0)}.
\end{equation}
Since
\begin{equation*} 
q:=\frac1{1-\theta_0}
=1+\frac{2}{d/2-1-s}>1,
\end{equation*}
we have
\begin{equation}\label{eq:Dlow-power}
{\cD^\low(t):=\norm{U^\low}_{\Bdot^{d/2+1}_{2,1}}\ge cM_s^{-(q-1)}\bigg(\norm{U^\low}_{\Bdot^{d/2-1}_{2,1}}\bigg)^q:= cM_s^{-(q-1)}\cE_\low(t)^q.}
\end{equation}

For high-frequency,  using Bernstein's inequality implies
\begin{equation}\label{eq:Dhigh-Ehigh}
\begin{aligned}
\cD^\high(t)
&=\norm{u^\high}_{\Bdot^{d/2+1}_{2,1}}
+\norm{z^\high}_{\Bdot^{d/2}_{2,1}}\\
&\ge c\norm{u^\high}_{\Bdot^{d/2-1}_{2,1}}
+c\norm{z^\high}_{\Bdot^{d/2}_{2,1}}
=c\cE_\high(t).
\end{aligned}
\end{equation} 
Recalling that $\cE(t)\le C\varepsilon_0<1$, then
\begin{equation}\label{eq:Dhigh-power}
\cD^\high(t)\ge cM_s^{-(q-1)}{\cE_\high(t)}^q.
\end{equation}
Using $(x+y)^q\le2^{q-1}(x^q+y^q)$, \eqref{eq:Dlow-power} and \eqref{eq:Dhigh-power} imply
\begin{equation}\label{eq:D-E-power}
\cD(t)\ge cM_s^{-(q-1)}\cE(t)^q.
\end{equation}
By \eqref{eq:Lyap-equi}, the preceding coercivity inequality implies
\begin{equation*} 
\mathscr D_a(t)\ge cM_s^{-(q-1)}\mathscr E_a(t)^q.
\end{equation*}
Insert this into \eqref{525232} implies
\begin{equation}\label{eq:E-ode}
\frac{\dd}{\dd t}\mathscr E_a(t)
+cM_s^{-(q-1)}\mathscr E_a(t)^q\le0.
\end{equation}
Integrate the
differential inequality on intervals where $\mathscr E_a>0$ and use
\eqref{eq:Lyap-equi} give
\begin{align*}
\mathscr E_a(t)^{1-q}
&\ge \mathscr E_a(0)^{1-q}
+c(q-1)M_s^{-(q-1)}t,  
\end{align*}
and
\begin{align*}
\cE(t)
&\le C\mathscr E_a(t)
\le C(1+t)^{-1/(q-1)}
=C(1+t)^{-\alpha_0}.
\end{align*}
   Since
\[
\norm{u^\high}_{\Bdot^{d/2-1}_{2,1}}
+\norm{z^\high}_{\Bdot^{d/2}_{2,1}}
\le \cE(t),
\]
\eqref{eq:non-high-d} follows immediately.  

Recalling that $s<\sigma\le d/2-1$ and then using interpolation inequality gives   
\begin{equation*} 
\norm{U^\low}_{\Bdot^\sigma_{2,1}}
\le C
\norm{U^\low}_{\Bdot^s_{2,\infty}}^{\theta}
\norm{U^\low}_{\Bdot^{d/2-1}_{2,1}}^{1-\theta},
\qquad
\theta=\frac{d/2-1-\sigma}{d/2-1-s}.
\end{equation*}
Since
\[
(1-\theta)\alpha_0
=\frac{\sigma-s}{d/2-1-s}\cdot\frac{d/2-1-s}{2}
=\frac{\sigma-s}{2},
\]
we obtain
\begin{equation}\label{eq:low-g-decay}
\norm{U^\low(t)}_{\Bdot^\sigma_{2,1}}
\le C\braket t^{-\frac12(\sigma-s)}.
\end{equation}
The high-frequency part of $U$ is bounded by $\cE(t)$, whose decay exponent is at least $(\sigma-s)/2$. Adding the three components in \eqref{eq:Hrho} proves \eqref{eq:non-upper}.
\end{proof}

\begin{remark} 
The decay proof of \cite{ZhaiDecay2019} assumes a negative Besov norm of the whole pair $(u_0,\tau_0)$. The argument above shows that the active negative norm of $(u_0,\Lamb^{-1}\Pj\diver\tau_0)$ is sufficient for active decay. The passive stress is needed only in its small critical norm, where it acts as a coefficient in \eqref{eq:hy-low-s}. This formulation is compatible with the stationary stresses in
\cref{thm:stationary-1}.
\end{remark}

\section{{Proof of the nonlinear decay characterization}}\label{sec:nonlinear-transfer}
Let $S(t)=e^{t\cL}$ denote the semigroup of the active linear system \eqref{eq:l-1} and define

\begin{equation}
U_L(t):=S(t)U_0,
\qquad
R(t):=U(t)-U_L(t).
\label{eq:ULR}
\end{equation}
The exact Duhamel equation is
\begin{equation}
R(t)=\int_0^t S(t-s)\cN(u,\tau)(s)\,\dd s,
\label{eq:DuhamelR}
\end{equation}
where 
\begin{align*}
\cN(u,\tau)
&={}
\begin{pmatrix}
-\Pj(u\cdot\nabla u)\\[1mm]
-\cT(u\cdot\nabla\tau)-\cT Q(\tau,\nabla u)
\end{pmatrix}
\notag\\
&={}
\begin{pmatrix}
-\Pj(u\cdot\nabla u)\\[1mm]
-u\cdot\nabla z-[\cT,u\cdot\nabla]\tau-\cT Q(\tau,\nabla u)
\end{pmatrix}.
\end{align*}

Recall the critical amplitude $\cA(t)$ and the active dissipation $\cD(t)$ from \eqref{eq:criti-11} and \eqref{eq:active-d-11}. For the global small solution of \cref{thm:known-global,prop:act-Lyap},
\begin{equation}\label{eq:AD-global}
\sup_{t\ge0}\cA(t)\le C \cX_0\ll1,
\qquad
\int_0^\infty \cD(t)\,\dd t\le C \cX_0.
\end{equation}

\begin{definition} \label{def:low-c0}
For $s\in\R$, define
\begin{equation}
\Besov{s }{2}{c_0}
:=\left\{
f \in\Besov{s}{2}{\infty}:
2^{js}\norm{\dot\Delta_j f}_{2}\longrightarrow0
\text{ as }j\to-\infty
\right\}.
\label{eq:c0-space}
\end{equation}
We write
\[
F^{\low}\in\CL^1\bigl(\Besov{s}{2}{c_0}\bigr)
\]
when the sequence $\bigl(2^{js}\|\dot\Delta_jF\|_{L^1(0,\infty;L^2)}\bigr)_{j\le j_0}$
is bounded and
\begin{equation}
2^{js}\norm{\dot\Delta_jF}_{L^1(0,\infty;L^2)}
\longrightarrow0
\quad(j\to-\infty).
\label{eq:c0-time}
\end{equation}
\end{definition}

\subsection{{Recovering initial regularity from decay}}
{
\begin{lemma}
\label{lem:same-order}
Assume $d\ge2$ and recall the definition of $\alpha$ from \eqref{eq:alpha}, consider
\begin{equation*}
 -\frac d2<\sigma_1<\sigma\le \scal.
\end{equation*}
Let $(u,\tau)$ be the global solution established in Theorem \ref{thm:known-global}, and assume that
\begin{equation}
\norm{U(t)}_{\cH^\sigma}\le M\braket t^{-\alpha}
\qquad(t\ge0).
\label{eq:observ-decay}
\end{equation}
 We then have
\begin{equation}
U_0^{\low}\in\Besov{\sigma_1}{2}{\infty}
\label{eq:B-neg-rec}
\end{equation}
and
\begin{equation}
{
\norm{R^{\low}(t)}_{\Besov{\sigma}{2}{1}}
\le C\bigl(M+\cX_0^2\bigr)\braket t^{-\alpha}}
\qquad(t\ge0).
\label{eq:same-order}
\end{equation}

If \eqref{eq:observ-decay} is substituted  by
\[
\norm{U(t)}_{\cH^\sigma}\le M t^{-\alpha}
\qquad(t\ge t_0\ge1),
\]
 the conclusion \eqref{eq:B-neg-rec} still holds true.
\end{lemma}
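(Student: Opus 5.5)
We follow the scheme announced in the introduction: a frequency-resolved estimate for the nonlinear Duhamel source, closed by the weighted discrete Gronwall lemma (Lemma~\ref{key-le-10}), followed by evaluation at the diffusion times $t_j=2^{-2j}$.

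\emph{Step 1: block inequalities.} For $j\le j_0$ we use the Duhamel formula \eqref{eq:DuhamelR} together with the two-sided dyadic bound \eqref{eq:d-two-s}. At $t=t_j$ this gives
\[
c\norm{\dyad_jU_0}_2\le\norm{\dyad_jU(t_j)}_2+C\int_0^{t_j}\norm{\dyad_j\cN(s)}_2\dd s .
\]
By \eqref{eq:observ-decay}, the first term on the right is at most $2^{-j\sigma}M\braket{t_j}^{-\alpha}\simeq M2^{-j\sigma_1}$. It therefore suffices to control the source amplitude
\[
Y_j:=2^{j\sigma_1}\norm{\dyad_j\cN}_{L^1(0,t_j;L^2)}
\]
uniformly in $j$. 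We introduce the dissipative amplitude
\[
X_j:=\sup_{t\ge0}\braket t^{\alpha}2^{j\sigma}\norm{\dyad_jU(t)}_2+2^{j(\sigma_1+2)}\int_0^{\infty}\ldots,
\]
suitably weighted. Its envelope is bounded by $M$ from the decay assumption, although the hypothesis alone does not control its time-integrated part.

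\emph{Step 2: estimating $Y_j$.} Split $\cN$ according to the decomposition used in Proposition~\ref{636543}, namely $\Pj(u\cdot\nabla u)$, $u\cdot\nabla z$, $[\cT,u\cdot\nabla]\tau$ and $\cT Q(\tau,\nabla u)$, and apply Bony's decomposition to each piece.

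1. In the paraproducts, the low-frequency factor $\dot S_{j-1}u$ is paired with a block $\dyad_k$, $k\sim j$, of an active quantity.
2. In the remainders, blocks $k\ge j$ produce the factor $2^{-(k-j)\gamma_2}$, with $\gamma_2>0$ coming from $\sigma_1+d/2>0$.
3. Low-high interactions produce the factor $2^{-(j-k)\gamma_1}$.

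The coefficient sequence is $\varepsilon_k:=2^{k\scal}\norm{\dyad_k(u,\tau)}_{L^\infty_tL^2}$ together with its time-integrated analogue. This sequence lies in $\ell^1$ with norm $\lesssim\cX_0$ by \eqref{6364553}.

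The dangerous term is $[\cT,u\cdot\nabla]\tau$, because $\tau$ does not decay. Here we write the localized commutator through the kernel of $\cT$: a first-order Taylor expansion of $u$ moves the derivative off $\tau$, so that $\nabla u$ (active, decaying) multiplies $\tau$ (critical, small). This yields a bound $\lesssim\varepsilon_k X_k$, in which $\tau$ enters only as the coefficient $\varepsilon_k$. The high-frequency parts of $U$ are treated with \eqref{eq:non-high-d}-type bounds derived from \eqref{eq:observ-decay}, and they contribute $\cX_0^2$.

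The outcome is the inequality quoted in the introduction:
\[
X_j\le C(M+\cX_0^2)+C\varepsilon_j\sum_{k<j}2^{-(j-k)\gamma_1}X_k+C\sum_{k\ge j}2^{-(k-j)\gamma_2}\varepsilon_kX_k .
\]

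\emph{Step 3: closing and conclusions.} Lemma~\ref{key-le-10}, applied with $\sum\varepsilon_k\lesssim\cX_0$ small, gives $\sup_jX_j\le C(M+\cX_0^2)$. A priori finiteness of the $X_j$ is justified by truncating at $j\ge -N$ and letting $N\to\infty$. This yields $\sup_jY_j<\infty$, and hence \eqref{eq:B-neg-rec} by Step~1; the finitely many blocks $j$ near $j_0$ are handled by the critical bound.

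For \eqref{eq:same-order}, once $U_0^\low\in\Bdot^{\sigma_1}_{2,\infty}$ is known, Theorem~\ref{thm:li-upp-} gives $\norm{U_L^\low(t)}_{\Bdot^\sigma_{2,1}}\lesssim\braket t^{-\alpha}$, with constant controlled by $M+\cX_0^2$ through the recovered datum norm. Then
\[
R^\low=U^\low-U_L^\low
\]
is bounded by the sum of the two decays. Alternatively, the same conclusion follows by summing the Duhamel blocks against $\sum_j(2^j\sqrt t)^{2\alpha}e^{-c2^{2j}t}$, which is the route that covers the endpoint $\sigma=\scal$.

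The variant hypothesis, decay only for $t\ge t_0$, changes $X_j$ only for the finitely many $j$ with $t_j<t_0$. On $[0,t_0]$ the norm of $U$ is bounded by $C\cX_0$, so the argument goes through unchanged.

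The main obstacle is Step~2 for the passive-stress commutator and for $\cT Q(\tau,\nabla u)$. We must obtain decay factors on both sides in $k$, uniformly down to $\sigma_1\to-d/2$, using only the critical norm of $\tau$ and without losing a derivative. The localized first-moment kernel estimate is the tool we would try first.
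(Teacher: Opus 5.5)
Your proposal is essentially the paper's proof: the dyadic amplitudes $X_j,Y_j$, the Bony and first-moment-kernel estimates leading to the two-kernel inequality with an $\ell^1$ coefficient sequence, closure by Lemma~\ref{key-le-10}, evaluation at $t_j=2^{-2j}$, the triangle inequality $R^\low=U^\low-U_L^\low$ for \eqref{eq:same-order}, and critical bounds on the finitely many blocks with $t_j<t_0$ for the eventual-time variant. Three small fixes are needed. First, take $Y_j$ over $(0,\infty)$ rather than $(0,t_j)$, so that integrating \eqref{eq:block-Duhamel} gives $X_j\le C(a_j+Y_j)$ with $a_j=2^{j\sigma_1}\norm{\dyad_jU_0}_2$. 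Second, drop the $\braket t^{\alpha}$-weighted supremum from $X_j$: it is never used in the source estimates, and under the eventual-time hypothesis it is not controlled on $[0,t_0]$. Third, the hypothesis $\sum_j2^{j\gamma_1}X_j<\infty$ of Lemma~\ref{key-le-10} is exactly the critical bound \eqref{6747543}, so no truncation in $j$ is needed.
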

\begin{proof}
Set
\begin{equation}
 \tilde s:=\sigma-\sigma_1>0,
\qquad \gamma_1:=\scal-\sigma_1,\qquad \gamma_2:=\sigma_1+\frac d2,
\label{eq:B-gammas}
\end{equation}
The admissible range \eqref{eq:main-assumptions} gives
\begin{equation}
\gamma_1\ge\tilde s,\qquad \gamma_2>0.
\label{eq:B-gap-con}
\end{equation}

\medskip
\noindent\textbf{Step 1: A key $\ell^1$ norm of critical sequence.}
For $j\le j_0$, define
\begin{equation*}
X_j:=2^{j\sigma_1}\norm{\dyad_jU}_{L^\infty_tL^2_x}
+2^{j(\sigma_1+2)}\norm{\dyad_jU}_{L^1_tL^2_x},
\qquad
Y_j:=2^{j\sigma_1}\norm{\dyad_j\cN(u,\tau)}_{L^1_tL^2_x}.
\label{eq:B-XY}
\end{equation*}
Since $z=\cT\tau$, the critical estimate
\eqref{6364553} gives 
\begin{equation}
\sum_{j\le j_0}2^{j\gamma_1}X_j
=\sum_{j\le j_0}\left(
2^{j\scal}\norm{\dyad_jU}_{L^\infty_tL^2_x}
+2^{j(\scal+2)}\norm{\dyad_jU}_{L^1_tL^2_x}
\right)
\le C \cX_0.
\label{6747543}
\end{equation}
Fix an integer $N_*$ large enough. 
For $k\le j_0+N_*$, set
\begin{equation}
c_k:=2^{k\scal}
\left(
\norm{\dyad_ku}_{L^\infty_tL^2_x}
+\norm{\dyad_k\tau}_{L^\infty_tL^2_x}
\right).
\label{eq:B-ck}
\end{equation}
Noticing that the high stress weight
$2^{k(\scal+1)}$ controls the weight in \eqref{eq:B-ck}, then the critical global
estimate implies
\begin{equation*}
\sum_{k\le j_0+N_*}c_k\le C \cX_0.
\label{eq:B-c-l1}
\end{equation*}

Set
\begin{equation}
\varepsilon_j:=C\left(
\sum_{|m-j|\le N_*}c_m
+\sum_{m\le j-N_*}2^{m-j}c_m
\right),\qquad j\le j_0.
\label{eq:B-epsilon}
\end{equation}
Changing the order of summation yields
\begin{equation}
\sum_{j\le j_0}\varepsilon_j\le C \cX_0.
\label{eq:B-epsilon-l1}
\end{equation}
\medskip
\noindent\textbf{Step 2: Frequency-resolved estimate of the  source term.}
  For a representative low-frequency term
$Q(\tau,\nabla u)$, the three Bony interactions
satisfy
\begin{align}
	2^{j\sigma_1}\norm{\dyad_j(\dot T_\tau \nabla u)}_{L^1_tL^2_x}
	&\le C X_j\sum_{m<j+C}2^{m-j}c_m,
	\label{eq:B-para-1}\\
	2^{j\sigma_1}\norm{\dyad_j(\dot T_{\nabla u}\tau)}_{L^1_tL^2_x}
	&\le C c_j\sum_{k<j}2^{-(j-k)(\scal-\sigma_1)}X_k,
	\label{eq:B-para-2}\\
	2^{j\sigma_1}\norm{\dyad_j\dot R(\tau,\nabla u)}_{L^1_tL^2_x}
	&\le C\sum_{k\ge j-C}2^{-(k-j)(\sigma_1+d/2)}c_kX_k.
	\label{eq:B-remainder}
\end{align}
where  \eqref{eq:B-para-1} comes from 
$\norm{\dot S_{j-1}\tau}_{L^\infty_x}
\lesssim\sum_{m<j}2^{md/2}\norm{\dyad_m\tau}_2$; \eqref{eq:B-para-2} uses
$\norm{\dot S_{j-1}\nabla u}_{L^1_tL^\infty_x}
\lesssim\sum_{k<j}2^{k(\scal-\sigma_1)}X_k$; and \eqref{eq:B-remainder} follows from the Bernstein inequality gain $2^{jd/2}$. 

The terms $\Pj(u\cdot\nabla u)$, $u\cdot\nabla z$ have similar form. For
$[\cT,u\cdot\nabla]\tau$, the localized mean-value formula used in \eqref{665453}  places the effective derivative on $u$ and gives the same two kernels. {Indeed,
\[
 \mathscr C(u,\tau)
 :=[\mathcal T,u\cdot\nabla]\tau,
 \qquad
 \mathcal T=\Lambda^{-1}\mathbb P\operatorname{div}.
\]

For the low-frequency, the definitions of
$c_k$ and $X_k$ give
\[
 \|\tau_k\|_{L^\infty_tL^2_x}\le 2^{-ks_c}c_k,
 \qquad
 \|u_k\|_{L^1_tL^2_x}\le 2^{-k(\sigma_1+2)}X_k.
\]
Consequently, Bernstein's inequalities imply
\[
 \|\nabla u_k\|_{L^1_tL^\infty_x}
 \lesssim 2^{k(s_c-\sigma_1)}X_k,
 \qquad
 \|\nabla\tau_k\|_{L^\infty_tL^\infty_x}
 \lesssim 2^{2k}c_k.
\]
Let $N_*$ be the (large) integer fixed in Step~1 and let $N_0$ denote a fixed integer depending only on the dyadic decomposition. We first consider interactions whose
two input indices are at most $j_0+N_*$ (low-frequency inputs); comparable high-frequency inputs are treated afterwards.
We split these interactions into low-high, high-low, and
comparable-frequency contributions, denoted by
$\mathscr C_j^{(1)}$, $\mathscr C_j^{(2)}$, and
$\mathscr C_j^{(3)}$, respectively.

\medskip
\noindent\textit{(i) Low-frequency velocity and high-frequency stress.}
Consider
\[
 \dot\Delta_j[\mathcal T,u_m\cdot\nabla]\tau_k,
 \qquad m\le k-N.
\]
The Fourier support properties imply that this expression vanishes
unless $|j-k|\le N_0$.
Choose a smooth annular cutoff $\chi$ and define
\[
 \mathcal T_k:=\mathcal T\chi(2^{-k}D).
\]
The convolution kernel $K_k$ of $\mathcal T_k$ satisfies
\[
 \int_{\mathbb R^d}|y|\,|K_k(y)|\,dy\lesssim 2^{-k}.
\]
 The commutation of   $\mathcal T$   with spatial derivatives yields 
\[
 [\mathcal T,u_m\cdot\nabla]\tau_k
 =
 \sum_{a=1}^d[\mathcal T_k,u_m^a]\partial_a\tau_k.
\]
Moreover,
\[
 \begin{aligned}
 [\mathcal T_k,u_m^a]\partial_a\tau_k(x)
 =\int_{\mathbb R^d}K_k(y)
 \bigl(u_m^a(x-y)-u_m^a(x)\bigr)
 \partial_a\tau_k(x-y)\,dy.
 \end{aligned}
\]
Using the fact
\[
 u_m^a(x-y)-u_m^a(x)
 =
 -\int_0^1y\cdot\nabla u_m^a(x-\theta y)\,d\theta
\]
and the first-moment bound for $K_k$, we obtain
\[
 \begin{aligned}
 \|[\mathcal T,u_m\cdot\nabla]\tau_k\|_{L^2_x}
 &\lesssim
 2^{-k}\|\nabla u_m\|_{L^\infty_x}
          \|\nabla\tau_k\|_{L^2_x}\\
 &\lesssim
 \|\nabla u_m\|_{L^\infty_x}\|\tau_k\|_{L^2_x}.
 \end{aligned}
\]

Thus the factor $2^{-k}$ supplied by the commutator cancels the derivative on the high-frequency stress.
Integrating in time and using $|j-k|\le N_0$ give
\[
 \begin{aligned}
 &2^{j\sigma_1}
 \|\dot\Delta_j[\mathcal T,u_m\cdot\nabla]\tau_k
       \|_{L^1_tL^2_x}\\
 & \lesssim
 2^{j\sigma_1}\|\nabla u_m\|_{L^1_tL^\infty_x}
          \|\tau_k\|_{L^\infty_tL^2_x}\\
 & \lesssim
 2^{j\sigma_1+m(s_c-\sigma_1)-ks_c}X_mc_k\\
 & \lesssim
 2^{-(k-m)(s_c-\sigma_1)}c_kX_m.
 \end{aligned}
\]

\medskip
\noindent\textit{(ii) High-frequency velocity and low-frequency stress.}
For $m\le k-N_*$ and noticing 
 the  support belongs to  $|j-k|\le N_0$,
we then estimate the two terms of the commutator separately 
\[
 \begin{aligned}
 &\|\dot\Delta_j\mathcal T(u_k\cdot\nabla\tau_m)
       \|_{L^1_tL^2_x}
 +\|\dot\Delta_j(u_k\cdot\nabla\mathcal T\tau_m)
       \|_{L^1_tL^2_x}\\
 & \lesssim
 \|u_k\|_{L^1_tL^2_x}\,
 2^{m(1+d/2)}\|\tau_m\|_{L^\infty_tL^2_x}.
 \end{aligned}
\]
Here we used {Bernstein's inequality}
\[
 \|\nabla\mathcal T\tau_m\|_{L^\infty_x}
 \lesssim 2^{m(1+d/2)}\|\tau_m\|_{L^2_x}.
\]
It follows that
\[
 \begin{aligned}
 &2^{j\sigma_1}
 \|\dot\Delta_j[\mathcal T,u_k\cdot\nabla]\tau_m
       \|_{L^1_tL^2_x}\\
 & \lesssim
 2^{j\sigma_1-k(\sigma_1+2)+2m}X_kc_m
 \lesssim 2^{2(m-k)}X_kc_m.
 \end{aligned}
\]

\medskip
\noindent\textit{(iii) Comparable-frequency inputs.}
For $|k-m|<N_*$ and 
$k\ge j-N_0$.
The localized multiplier estimate
\[
 \|\dot\Delta_j\mathcal T h\|_{L^2_x}
 +\|\dot\Delta_jh\|_{L^2_x}
 \lesssim 2^{jd/2}\|h\|_{L^1_x}
\]
and H\"older's inequality give
\[
 \begin{aligned}
 &\|\dot\Delta_j[\mathcal T,u_k\cdot\nabla]\tau_m
       \|_{L^1_tL^2_x}\\
 & \lesssim
 2^{jd/2}2^m
 \|u_k\|_{L^1_tL^2_x}
 \|\tau_m\|_{L^\infty_tL^2_x}.
 \end{aligned}
\]

For the second term of the commutator, we use
$\|\nabla\mathcal T\tau_m\|_2\lesssim 2^m\|\tau_m\|_2$.
Since $|k-m|<N_*$ and $s_c+1=d/2$, the dyadic weights satisfy
\[
 \begin{aligned}
 &2^{j\sigma_1}
 \|\dot\Delta_j[\mathcal T,u_k\cdot\nabla]\tau_m
       \|_{L^1_tL^2_x}\\
 & \lesssim
 2^{j(\sigma_1+d/2)}2^{m-ms_c-k(\sigma_1+2)}X_kc_m\\
 & \lesssim
 2^{-(k-j)(\sigma_1+d/2)}X_kc_m.
 \end{aligned}
\]    }
     After enlarging $N_*$ by a fixed
amount,  for $j\le j_0$,
\[
\dyad_j(f_k\nabla g_m)=0
\quad\hbox{if}\quad
\max\{k,m\}\ge j_0+N_*,\quad |k-m|>N_*.
\]
{Since the preceding estimate for the remainder term only covers interactions between comparable low frequencies, it remains to consider the following case: $k, m$ are comparable high frequencies.}  Then, for $k\ge j_0-N_*$,
\[
a_k^u:=2^{k\scal}\norm{u_k}_{L^\infty_tL^2_x},\qquad
a_k^\tau:=2^{k(\scal+1)}\norm{\tau_k}_{L^\infty_tL^2_x},\qquad
b_k^u:=2^{k(\scal+2)}\norm{u_k}_{L^1_tL^2_x}.
\]
The finitely many transition blocks are easy to deal with, and
\eqref{6364553} gives
\[
\sum_{k\ge j_0-N_*}(a_k^u+a_k^\tau+b_k^u)\le C \cX_0.
\]

Let
\[
\mathcal I:=\bigl\{(k,m):\max\{k,m\}\ge j_0+N_*,\ |k-m|\le N_*\bigr\}.
\]
Since $\diver u_k=0$, the two transport terms may be written as
\[
\Pj(u_k\cdot\nabla u_m)=\Pj\diver(u_k\otimes u_m),
\qquad
\cT(u_k\cdot\nabla\tau_m)=\cT\diver(u_k\otimes\tau_m).
\]
Hence the derivative in these two terms acts at frequency $2^j$.
Using that $\Pj$ and $\cT$ have order zero, Bernstein's inequality
$\norm{\dyad_jh}_2\lesssim2^{jd/2}\norm h_1$,  H\"older inequality in  spatial and then in time, we obtain for $(k,m)\in\mathcal I$
\begin{align*}
	\norm{\dyad_j\Pj(u_k\cdot\nabla u_m)}_{L^1_tL^2_x}
	&\lesssim 2^{j(1+d/2)}
	\norm{u_k}_{L^\infty_tL^2_x}\norm{u_m}_{L^1_tL^2_x},\\
	\norm{\dyad_j\cT(u_k\cdot\nabla\tau_m)}_{L^1_tL^2_x}
	&\lesssim 2^{j(1+d/2)}
	\norm{u_k}_{L^1_tL^2_x}\norm{\tau_m}_{L^\infty_tL^2_x},\\
	\norm{\dyad_j\cT Q(\tau_k,\nabla u_m)}_{L^1_tL^2_x}
	&\lesssim 2^{jd/2}
	\norm{\tau_k}_{L^\infty_tL^2_x}
	2^m\norm{u_m}_{L^1_tL^2_x}.
\end{align*}
and
{applying the estimates from part~(iii) with these high-frequency
weights, we obtain
\[
 \begin{aligned}
 &2^{j\sigma_1}
 \|\dot\Delta_j[\mathcal T,u_k\cdot\nabla]\tau_m
       \|_{L^1_tL^2_x}\\
 & \lesssim
 2^{j(\sigma_1+d/2)}
 2^{m-m(s_c+1)-k(s_c+2)}
 b_k^u a_m^\tau\\
 & \lesssim
 2^{j(\sigma_1+d/2)}2^{-kd}b_k^u a_m^\tau,
 \qquad |k-m|\le N_*.
 \end{aligned}
\]}

Because $|k-m|\le N_*$ and $\scal=d/2-1$, the three products of dyadic
weights satisfy, respectively,
\begin{align*}
	2^{-k\scal}2^{-m(\scal+2)}&\lesssim2^{-kd},\\
	2^{-k(\scal+2)}2^{-m(\scal+1)}&\lesssim2^{-k(d+1)},\\
	2^{-k(\scal+1)}2^{-m(\scal+1)}&\lesssim2^{-kd}.
\end{align*}
All indices in $\mathcal I$ are bounded from below by a fixed number depending
only on $j_0$ and $N_*$.  

Denote by $\cN_{\mathcal I}$ the sum of the contributions of the pairs $(k,m)\in\mathcal I$ to $\cN(u,\tau)$. Summing the preceding estimates and using the $\ell^1$ bounds for $a^u,a^\tau,b^u$ therefore yields
\begin{align*}
	2^{j\sigma_1}\norm{\dyad_j\cN_{\mathcal I}}_{L^1_tL^2_x}
	&\lesssim 2^{j(\sigma_1+d/2)}(1+2^j)
	\left(\sum_k(a_k^u+a_k^\tau)\right)\left(\sum_m b_m^u\right)\notag\\
	&\le C2^{j(\sigma_1+d/2)}\cX_0^2
	\le C \cX_0^2,
	\qquad j\le j_0,
\end{align*}
where $1+2^j\le C_{j_0}$. 

 Since $\sigma_1+d/2>0$, the factor
$2^{j(\sigma_1+d/2)}$ is uniformly bounded for $j\le j_0$ and tends to zero as
$j\to-\infty$.  Consequently
\begin{align}\label{23663466}
{	Y_j\le C \cX_0^2
	+C\varepsilon_j\sum_{k<j}2^{-(j-k)\gamma_1}X_k
	+C\sum_{j\le k\le j_0}2^{-(k-j)\gamma_2}\varepsilon_kX_k.}
\end{align}

\medskip
\noindent\textbf{Step 3: Recovery of the initial negative norm.}
	Let
\[
a_j:=2^{j\sigma_1}\norm{\dyad_jU_0}_2.
\label{eq:B-aj}
\]
 Integrating the estimate \eqref{eq:block-Duhamel} on
$(0,\infty)$ gives
\begin{equation}	 \label{eq:B-X-from-aY}
	 X_j\le C(a_j+Y_j).
\end{equation}
At the intrinsic diffusion time $t_j:=2^{-2j}$, using the   estimate \eqref{eq:d-two-s} gives
\begin{equation}\label{eq:B-aj-at-tj}
	 a_j\le C2^{j\sigma_1}\norm{\dyad_jU_L(t_j)}_2
	 \le C2^{j\sigma_1}\norm{\dyad_jU(t_j)}_2+C Y_j.
\end{equation}
By \eqref{eq:observ-decay}, we have
 \begin{equation}\label{eq:B-ob-at-tj}
2^{j\sigma_1}\norm{\dyad_jU(t_j)}_2
\le 2^{-j\tilde s}\norm{U(t_j)}_{\cH^\sigma}
\le C M,
 \end{equation}
where we used that $2^{-j\tilde s}\braket{2^{-2j}}^{-\tilde s /2}$ is uniformly bounded for
$j\le j_0$. Hence $a_j\le C(M+Y_j)$, and \eqref{eq:B-X-from-aY} gives
$X_j\le C(M+Y_j)$.

 Combining this {with \eqref{23663466} } and employing 
\eqref{eq:B-v-ineq} with $x_j=X_j$, $e_j=\varepsilon_j$, and
$A=C(M+\cX_0^2)$, using  \eqref{eq:B-gap-con},  
\eqref{eq:B-epsilon-l1},   \cref{key-le-10}  implies
{\begin{align*}\label{eq:B-uniform-XYa}
\sup_{j\le j_0}X_j+\sup_{j\le j_0}Y_j
+\sup_{j\le j_0}a_j
\le C(M+\cX_0^2),
\end{align*}}
where we used \eqref{eq:B-epsilon-l1} and decreased $\varepsilon_0$ in \eqref{eq:X0} so that $C\cX_0\le e_*$. We stress that \cref{key-le-10} only requires $\gamma_1>0$, so that the argument covers the critical endpoint $\sigma=\scal$, where $\gamma_1=\tilde s$.
Moreover,
\[
\norm{U_0^{\low}}_{\Besov{\sigma}{2}{1}}
\le \frac{2^{j_0 \tilde s}}{1-2^{-\tilde s}}
\norm{U_0^{\low}}_{\Besov{\sigma_1}{2}{\infty}},
\qquad \tilde s=\sigma-\sigma_1>0,
\]
recovers $U_0^{\low}\in\Besov{\sigma}{2}{1}$. The proof of
\cref{thm:li-upp-}, together with the active high-frequency datum
controlled by \eqref{eq:observ-decay}  yields
\[
\norm{U_L(t)}_{\cH^\sigma}
\le C(M+\cX_0^2)\braket t^{-\alpha}.
\]
Combining this estimate with \eqref{eq:observ-decay} and $R=U-U_L$ proves
\eqref{eq:same-order}.

It remains to prove the eventual-time assertion. Assume
\[
\norm{U(t)}_{\cH^\sigma}\le M t^{-\alpha}
\qquad(t\ge t_0\ge1),
\]
and choose $J\le j_0$ so that $2^{-2J}\ge t_0$.

 For $j\le J$, put
$t_j=2^{-2j}$, then
\[
2^{j\sigma_1}\norm{\dyad_jU(t_j)}_2
\le 2^{-j\tilde s}M t_j^{-\tilde s/2}
\le M.
\]
Using the Duhamel formula and the estimates
 \eqref{eq:block-Duhamel},\eqref{eq:d-two-s} imply, for any $j\le J$,
\begin{equation}\label{eq:eventual-aX}
	\begin{aligned}
		 	a_j:=2^{j\sigma_1}\norm{\dyad_jU_0}_2
		 &\le C2^{j\sigma_1}\norm{\dyad_jU_L(t_j)}_2
		 \le C(M+Y_j),\\
		 X_j&\le C(a_j+Y_j)\le C(M+Y_j).
	\end{aligned}
\end{equation}
In the last sum {of \eqref{23663466},} split the indices at $J$. Let
\[
F_J:=C\sum_{J<k\le j_0}
2^{-(k-J)\gamma_2}\varepsilon_kX_k.
\]

For $j\le J$, their contribution is
$2^{-(J-j)\gamma_2}F_J\le F_J$. Moreover,
\eqref{6747543} and \eqref{eq:B-epsilon-l1} give
\[
F_J
\le C \cX_0^2 2^{-J\gamma_1}
\sum_{n\ge1}2^{-n(\gamma_1+\gamma_2)}
\le C2^{-J\gamma_1}\cX_0^2<\infty.
\]
Consequently, on the half-axis $j\le J$, \eqref{23663466} and
\eqref{eq:eventual-aX} yield
\[
X_j\le A_J
+C_0\varepsilon_j\sum_{k<j}2^{-(j-k)\gamma_1}X_k
+C_0\sum_{j\le k\le J}2^{-(k-j)\gamma_2}\varepsilon_kX_k,
\]
where $A_J=C(M+\cX_0^2+F_J)$. The summability hypothesis \eqref{eq:B-v-hyp} of \cref{key-le-10} on the half-axis $j\le J$ follows from \eqref{6747543}, and, as before, $\sum_{j\le J}\varepsilon_j\le C\cX_0\le e_*$.

 {Applying the Lemma \ref{key-le-10}}
with upper endpoint $J$ therefore gives
\begin{equation}\label{eq:eventual-X}
	\sup_{j\le J}X_j\le C A_J<\infty.
\end{equation}
Substituting this  in \eqref{23663466} and following from
\eqref{eq:eventual-aX} gives
$\sup_{j\le J}(Y_j+a_j)<\infty$.

For the remaining finite family $J<j\le j_0$, the critical initial bound
\eqref{eq:X0} yields $a_j\le C2^{-J\gamma_1}\cX_0$. Hence
\begin{equation*}\label{eq:even-neg-re}
	 \norm{U_0^{\low}}_{\Besov{\sigma_1}{2}{\infty}}
	 =\sup_{j\le j_0}a_j
	 \le C\bigl(M+\cX_0^2+2^{-J\gamma_1}\cX_0\bigr)<\infty.
\end{equation*}
This proves the eventual-time assertion.  
\end{proof}
}

\subsection{The  nonlinear source with low-frequency \texorpdfstring{$c_0$}{c0} tail}
We next establish the low-frequency $c_0$ property for the nonlinear
source.

\begin{lemma}
\label{lem:c0-source}
Let
\begin{equation*}
-\frac d2<\sigma_1<\scal,
\qquad
U_0^{\low}\in\Besov{\sigma_1}{2}{\infty}.
\end{equation*}
Then
\begin{equation}
{
\cN(u,\tau)^{\low}
\in\CL^1\bigl(\Besov{\sigma_1}{2}{c_0}\bigr).}
\label{eq:c0-sou-r}
\end{equation}
Equivalently, if
\begin{equation}
b_j:=2^{j\sigma_1}
\norm{\dot\Delta_j\cN(u,\tau)}_{L^1(0,\infty;L^2)},
\qquad j\le j_0,
\label{eq:bj-def}
\end{equation}
then $(b_j)_{j\le j_0}$ is bounded and $b_j\to0$ as $j\to-\infty$.
\end{lemma}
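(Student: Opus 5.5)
Boundedness of $(b_j)$ comes directly from the machinery already assembled. By \cref{thm:negative-p} with $s=\sigma_1$ (admissible, since $-d/2<\sigma_1<\scal$), the quantities
\[
X_j=2^{j\sigma_1}\norm{\dyad_jU}_{L^\infty_tL^2_x}+2^{j(\sigma_1+2)}\norm{\dyad_jU}_{L^1_tL^2_x}
\]
are uniformly bounded for $j\le j_0$. The source estimate \eqref{eq:hy-low-time} with $T=\infty$, combined with the global bound \eqref{6364553}, then gives $\sup_{j\le j_0}b_j\le C(\norm{U_0^\low}_{\Besov{\sigma_1}{2}{\infty}}+\cX_0)\cX_0$. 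The real task is the $c_0$ property, which we obtain from the frequency-resolved inequality \eqref{23663466} of Step~2 in the proof of \cref{lem:same-order}. That inequality was derived without using the decay hypothesis (only the critical bounds), so it holds here with $Y_j=b_j$. In refined form it reads
\[
b_j\le C2^{j\gamma_2}\cX_0^2+C\varepsilon_j\sum_{k<j}2^{-(j-k)\gamma_1}X_k+C\sum_{j\le k\le j_0}2^{-(k-j)\gamma_2}\varepsilon_kX_k ,
\]
where the first term, from comparable high-frequency interactions, carries the explicit factor $2^{j(\sigma_1+d/2)}=2^{j\gamma_2}$ and hence tends to zero.

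For the second term we use $\sup_kX_k\le C$ and $\gamma_1=\scal-\sigma_1>0$. The sum is then bounded by $C\varepsilon_j$, and $\varepsilon_j\to0$ as $j\to-\infty$ because $(\varepsilon_j)\in\ell^1$ by \eqref{eq:B-epsilon-l1}. For the third term, fix $N$ and split at $k=j+N$. The range $k>j+N$ contributes at most $C2^{-N\gamma_2}\sup_k\varepsilon_kX_k\le C2^{-N\gamma_2}\cX_0$. The range $j\le k\le j+N$ contributes at most $C\sup_{j\le k\le j+N}\varepsilon_k$, which tends to zero as $j\to-\infty$ for fixed $N$. Letting first $j\to-\infty$ and then $N\to\infty$ yields $\limsup_{j\to-\infty}b_j=0$.

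The step I expect to be the main obstacle is confirming that every piece of the nonlinearity really fits this template with an $\ell^1$ (hence $c_0$) coefficient sequence rather than merely a bounded one. This concerns in particular the passive-stress terms $[\cT,u\cdot\nabla]\tau$ and $\cT Q(\tau,\nabla u)$ in the paraproduct $\dot T_\tau\nabla u$ (estimate \eqref{eq:B-para-1}). There the coefficient is $\sum_{m<j+C}2^{m-j}c_m$, which is part of $\varepsilon_j$ and vanishes because $c_m$ is summable. The $c_0$ decay is therefore inherited from the critical $\ell^1$ summability of $(u,\tau)$ and not from any decay of $\tau$.

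If the uniform bound on $X_k$ via \cref{thm:negative-p} is needed in the stronger Chemin--Lerner form, I would instead first rerun the discrete Gronwall \cref{key-le-10} with $A=C(\norm{U_0^\low}_{\Besov{\sigma_1}{2}{\infty}}+\cX_0^2)$. This gives $\sup X_j<\infty$ directly, using $X_j\le C(a_j+Y_j)$ from \eqref{eq:B-X-from-aY}.
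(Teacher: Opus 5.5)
Your argument is correct, but it takes a different route from the paper's.

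\textbf{The paper's route.} The paper does not use the frequency-resolved inequality \eqref{23663466} here. It truncates the critical $\ell^1$ factor to finitely many blocks, $\tau^{(N)}=\sum_{|k|\le N}\dyad_k\tau$. Fourier-support considerations show that for $j\le -N-J_*$ only comparable interactions $|\ell-k|\le C_0$ with $|k|\le N$ survive. Bernstein then produces the explicit factor $2^{(j+N+C_0)(\sigma_1+d/2)}$, which gives a $c_0$ tail for each fixed $N$. Finally, the paper passes to the full source using the bilinear continuity bound \eqref{eq:bilinear-C} applied to $\tau-\tau^{(N)}$ and the closedness of $c_0$ in $\ell^\infty$.

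\textbf{Your route.} You reuse Step~2 of \cref{lem:same-order}, which indeed relies only on the critical bounds and not on the decay hypothesis, so it applies with $Y_j=b_j$. You legitimately refine the first term to $C2^{j\gamma_2}\cX_0^2$, as the paper's own treatment of the $\mathcal I$-interactions shows. The uniform bound on $X_k$ comes from \cref{thm:negative-p}. You then read off the vanishing from the explicit kernels and the fact that $\varepsilon\in\ell^1$. Since \cref{thm:negative-p} is already stated in Chemin--Lerner form, your fallback through \cref{key-le-10} is unnecessary, though harmless.

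\textbf{What each buys.} The paper's argument is soft and modular. It needs only a global bilinear estimate that is continuous in the critical factor, plus a support computation on finitely many blocks. It never uses the detailed kernel structure or the first-moment commutator analysis. Your argument is more quantitative and proves more than the lemma claims if pushed one line further. Your majorant $C\bigl(2^{j\gamma_2}+\varepsilon_j+\sum_{j\le k\le j_0}2^{-(k-j)\gamma_2}\varepsilon_k\bigr)$ is summable over $j\le j_0$. Hence you actually obtain $(b_j)\in\ell^1$, that is, $\cN(u,\tau)^\low\in\CL^1(\Besov{\sigma_1}{2}{1})$, not merely a $c_0$ tail. The price is that your proof inherits the full weight of the derivation of \eqref{23663466}, including its term-by-term bookkeeping for the transport and commutator pieces, rather than only the coarse estimate \eqref{eq:hy-low-time}.
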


\begin{proof}
	Define the norms of $X_c$ and $Y_{\sigma_1}$ by
	\[
	\norm{\tau}_{X_c}
	:=\norm{\tau^\ell}_{\CL^\infty(\Bdot^{s_c}_{2,1})}
	+\norm{\tau^h}_{\CL^\infty(\Bdot^{s_c+1}_{2,1})},
	\]
	\[
	\norm{u}_{Y_{\sigma_1}}
	:=\norm{u^\ell}_{\CL^1(\Bdot^{\sigma_1+2}_{2,\infty})}
	+\norm{u^h}_{L^1(\Bdot^{s_c+2}_{2,1})},
	\qquad
	\norm{z}_{Y_{\sigma_1}}
	:=\norm{z^\ell}_{\CL^1(\Bdot^{\sigma_1+2}_{2,\infty})}
	+\norm{z^h}_{L^1(\Bdot^{s_c+1}_{2,1})}.
	\]
Recalling from Theorem \ref{thm:negative-p} and Theorem \ref{thm:known-global} gives
\begin{align*}
U^{\low}
&\in
\CL^\infty\bigl(\Besov{\sigma_1}{2}{\infty}\bigr)
\cap
\CL^1\bigl(\Besov{\sigma_1+2}{2}{\infty}\bigr),
\\
(u,\tau)^{\low}
&\in\CL^\infty\bigl(\Besov{\scal}{2}{1}\bigr),
\qquad
u^{\high}\in\CL^\infty\bigl(\Besov{\scal}{2}{1}\bigr),
\qquad
\tau^{\high}\in\CL^\infty\bigl(\Besov{\scal+1}{2}{1}\bigr),
\end{align*}
with the corresponding active dissipative norms in $L^1_t$.

Let $\mathfrak B(f,g)$ denote any one of the following four bilinear expressions, where $f$ stands for the critical factor ($u$ or $\tau$) and $g$ for the dissipative factor ($u$ or $z$):
\begin{equation}
\Pj(u\cdot\nabla u),\quad
u\cdot\nabla z,\quad
[\cT,u\cdot\nabla]\tau,\quad
\cT Q(\tau,\nabla u).
\label{eq:source-C}
\end{equation}
Then, by \eqref{eq:hy-low-time} with $T=\infty$,
\begin{equation}
	\norm{Q(\tau,\nabla u)^{\low}}_{\CL^1(\Besov{\sigma_1}{2}{\infty})}
	\le C\norm{\tau}_{X_c}\norm{u}_{Y_{\sigma_1}}.
	\label{eq:bilinear-C}
\end{equation}
The remaining terms can be treated similarly.

Let
\begin{equation*}
f^{(N)}:=\sum_{|k|\le N}\dot\Delta_k f.
\end{equation*}
Noticing that the critical dyadic norm of $f$ is an $\ell^1$ norm,
\begin{equation}
\norm{f-f^{(N)}}_{X_c}\longrightarrow0
\quad(N\to\infty).
\label{eq:critical-C}
\end{equation}
This together \eqref{eq:bilinear-C} implies
\begin{equation*}
\mathfrak B(f^{(N)},g)^{\low}
\longrightarrow \mathfrak B(f,g)^{\low}
\quad\text{in }\CL^1\bigl(\Besov{\sigma_1}{2}{\infty}\bigr).
\end{equation*}

\noindent\textbf{Step 1:  The estimates of $\cT Q$.}
We first verify the estimate for the nonlinear term
\begin{equation*}
\mathfrak B_Q(\tau,u):=\cT Q(\tau,\nabla u).
\end{equation*}
Choose the critical $\ell^1$ factor to be the stress and set
\begin{equation*}
\tau^{(N)}:=\sum_{|k|\le N}\dot\Delta_k\tau.
\end{equation*}

Since each component of $Q(\tau,\nabla u)$ is a finite linear combination of
products $\tau_{ab}\,\partial_m u_n$, the support argument above yields, for
$j\le-N-J_*$,

\begin{equation*}
\dot\Delta_j\cT Q(\tau^{(N)},\nabla u)
=
\sum_{\substack{|k|\le N\\|\ell-k|\le C_0}}
\dot\Delta_j\cT Q(\dot\Delta_k\tau,
\nabla\dot\Delta_\ell u),
\end{equation*}
here we used the following support identity for a sufficiently
small $j\le-N-J_*$
\begin{equation*}
\dot\Delta_j\mathfrak B(f^{(N)},g)
=
\sum_{\substack{|k|\le N\\|\ell-k|\le C_0}}
\dot\Delta_j\mathfrak B(f_k,g_\ell).
\end{equation*}

Indeed, whenever \(|k-\ell|>C_0\), after possibly enlarging \(C_0\), the Fourier support of the product of a \(2^k\)-block and a \(2^\ell\)-block is contained in an annulus of size \(2^{\max\{k,\ell\}}\). Consequently,
\begin{equation*}
\dot\Delta_j\mathfrak B(f_k,g_\ell)=0
\quad\text{unless}\quad
|j-\max\{k,\ell\}|\le C_0.
\end{equation*}
   Bernstein's inequality gives
\begin{align}
&2^{j\sigma_1}
\norm{\dot\Delta_j\cT Q(\tau^{(N)},\nabla u)}_{L^1_tL^2_x}
\notag\\
& \le
C2^{j(\sigma_1+d/2)}
\sum_{\substack{|k|\le N\\|\ell-k|\le C_0}}
\norm{Q(\dot\Delta_k\tau,
\nabla\dot\Delta_\ell u)}_{L^1_tL^1_x}
\notag\\
&\le
C2^{j(\sigma_1+d/2)}
\sum_{\substack{|k|\le N\\|\ell-k|\le C_0}}
2^\ell
\norm{\dot\Delta_k\tau}_{L^\infty_tL^2_x}
\norm{\dot\Delta_\ell u}_{L^1_tL^2_x}.
\label{eq:Q-block-C}
\end{align}

For $\sigma_1<\scal$, using the fact $2^{k\scal}\le C_{j_0}2^{k(\scal+1)}$ for larger $k$ and
$2^{\ell(\sigma_1+2)}\le C_{j_0,\sigma_1}2^{\ell(\scal+2)}$ for large $\ell$, we have
\begin{align*}
\sum_{k\in\Z}a_k
 \le C\norm{\tau}_{ X_c},  \qquad
B_{\sigma_1}
 \le C\norm{u}_{  Y_{\sigma_1}}.
\end{align*}
with
\begin{equation*}
	 a_k:=2^{k\scal}
	 \norm{\dot\Delta_k\tau}_{L^\infty_tL^2_x}, \quad  B_{\sigma_1}:=\sup_{\ell\in\Z}
	 2^{\ell(\sigma_1+2)}
	 \norm{\dot\Delta_\ell u}_{L^1_tL^2_x}.
\end{equation*}
 
If $|\ell-k|\le C_0$, then
\begin{align}
&2^\ell
\norm{\dot\Delta_k\tau}_{L^\infty_tL^2_x}
\norm{\dot\Delta_\ell u}_{L^1_tL^2_x}
\notag\\
& \le
C2^k\bigl(2^{-k\scal}a_k\bigr)
\bigl(2^{-k(\sigma_1+2)}B_{\sigma_1}\bigr)
=C2^{-k(\sigma_1+\scal+1)}a_kB_{\sigma_1}
=C2^{-k(\sigma_1+d/2)}a_kB_{\sigma_1}.
\label{eq:Q-weight-C}
\end{align}
Since $|k|\le N$ and $\sigma_1+d/2>0$,
\begin{equation}
2^{-k(\sigma_1+d/2)}\le2^{N(\sigma_1+d/2)}.
\label{eq:finite-k-Q-C}
\end{equation}
Substituting  the estimates \eqref{eq:Q-weight-C}-
\eqref{eq:finite-k-Q-C} into \eqref{eq:Q-block-C} gives
\begin{equation}
{
\begin{aligned}
&2^{j\sigma_1}
\norm{\dot\Delta_j\cT Q(\tau^{(N)},\nabla u)}_{L^1_tL^2_x}
\\
&\le
C2^{(j+N+C_0)(\sigma_1+d/2)}
\norm{\tau}_{  X_c}
\norm{u}_{  Y_{\sigma_1}},
 j\le-N-J_* .
\end{aligned}}
\label{eq:Q-finite-C}
\end{equation}
For each fixed $N$, the right-hand side of \eqref{eq:Q-finite-C} tends to zero as $j\to-\infty$.
Consequently,
\begin{equation}
\cT Q(\tau^{(N)},\nabla u)^{\low}
\in\CL^1\bigl(\Besov{\sigma_1}{2}{c_0}\bigr).
\label{eq:Q-t-c0-C}
\end{equation}

For completeness, the commutator calculation and the transport continuity estimates are detailed below.  The remaining terms $\Pj(u\cdot\nabla u)$ and $u\cdot\nabla z$ are similar.

By the fixed-cutoff embeddings in the preceding estimate, taking $F=\tau$ in Lemma \ref{lem:zero-o-c} gives
\begin{align}
	\norm{[\cT,u\cdot\nabla]\tau}_{\CL^1(\Bdot^{\sigma_1}_{2,\infty})}
	\lesssim \norm{\tau}_{X_c}\norm{u}_{Y_{\sigma_1}}.
	\label{eq:renumbered-1}
\end{align}
This is the explicit realization of estimate \eqref{eq:bilinear-C} for the commutator term.
Truncate the critical $\ell^1$ factor by setting
	\begin{equation*}
		\tau^{(N)}:=\sum_{\abs{k}\le N}\dyad_k\tau.
	\end{equation*}

Write
\[
u_\ell:=\dyad_\ell u,
\qquad \tau_k:=\dyad_k\tau.
\]
If
$\abs{\ell-k}>C_0$, then the support of the corresponding product lies in an annulus of size
$2^{\max\{k,\ell\}}$. Hence one may choose a fixed constant $J_*>0$ such that, whenever
$j\le -N-J_*$,
\begin{equation*}
	\dyad_j[\cT,u\cdot\nabla]\tau^{(N)}
	=
	\sum_{\abs{k}\le N}
	\sum_{\abs{\ell-k}\le C_0}
	\dyad_j\Bigl(\cT(u_\ell\cdot\nabla\tau_k)
	-u_\ell\cdot\nabla \cT\tau_k\Bigr).
\end{equation*}
	The convolution kernel of $\dyad_j\cT$ has $L^2$ norm bounded by $C2^{jd/2}$, and $\cT\tau_k$ is supported at the same frequency scale as $\tau_k$, then
	\begin{align}
		&\norm{\dyad_j\cT(u_\ell\cdot\nabla\tau_k)}_2
		+\norm{\dyad_j(u_\ell\cdot\nabla \cT\tau_k)}_2
		\notag\\
		&\lesssim
		2^{jd/2}\norm{u_\ell\cdot\nabla\tau_k}_{L_x^1}
		+2^{jd/2}\norm{u_\ell\cdot\nabla \cT\tau_k}_{L_x^1}
		\notag\\
		&\lesssim
		2^{jd/2}2^k\norm{u_\ell}_2\norm{\tau_k}_2.
		\label{eq:renumbered-4}
	\end{align}
Taking the $L_t^1$ norm, using $L_t^\infty\times L_t^1$, and multiplying by $2^{j\sigma_1}$ to \eqref{eq:renumbered-4}, we obtain
	\begin{align*}
		&2^{j\sigma_1}
		\norm{\dyad_j[\cT,u\cdot\nabla]\tau^{(N)}}_{L_t^1L_x^2}
		\notag\\
		&\lesssim
		2^{j(\sigma_1+d/2)}
		\sum_{\abs{k}\le N}
		\sum_{\abs{\ell-k}\le C_0}
		2^k
		\norm{\tau_k}_{L_t^\infty L_x^2}
		\norm{u_\ell}_{L_t^1L_x^2}.
	\end{align*}
Since $s_c=d/2-1$ and $\abs{\ell-k}\le C_0$, we have
	\begin{equation*}
		2^k\norm{\tau_k}_{L_t^\infty L_x^2}
		\norm{u_\ell}_{L_t^1L_x^2}
		\lesssim
		2^{-k(\sigma_1+d/2)}a_kB_{\sigma_1}.
	\end{equation*}
    
For $\abs{k}\le N$,
\[
2^{-k(\sigma_1+d/2)}\le 2^{N(\sigma_1+d/2)}.
\]
Combining this with
$\sum_k a_k\lesssim\norm{\tau}_{X_c}$ and
$B_{\sigma_1}\lesssim\norm{u}_{Y_{\sigma_1}}$ yields
	\begin{equation}
		2^{j\sigma_1}
		\norm{\dyad_j[\cT,u\cdot\nabla]\tau^{(N)}}_{L_t^1L_x^2}
		\lesssim
		C\,2^{(j+N+C_0)(\sigma_1+d/2)}
		\norm{\tau}_{X_c}\norm{u}_{Y_{\sigma_1}},
		 j\le -N-J_*.
		\label{eq:renumbered-7}
	\end{equation}

For any fixed $N$, the right-hand side of \eqref{eq:renumbered-7} tends to zero as $j\to-\infty$. The global boundedness of the dyadic sequence follows from \eqref{eq:renumbered-1}. Consequently,
	\begin{equation*}
		\bigl([\cT,u\cdot\nabla]\tau^{(N)}\bigr)^\ell
		\in \CL^1\bigl(\Bdot^{\sigma_1}_{2,c_0}\bigr).
	\end{equation*}

\medskip
\noindent\textbf{Step 2: Passing  to the full source.}
The hybrid critical stress norm has an $\ell^1$ dyadic summation, as a result
\begin{equation*}
\norm{\tau-\tau^{(N)}}_{  X_c}\longrightarrow0
(N\to\infty).
\end{equation*}
The product estimate   gives
\begin{align*}
&\norm{\cT Q(\tau-\tau^{(N)},\nabla u)^{\low}}
_{\CL^1(\Besov{\sigma_1}{2}{\infty})}
 \le
C\norm{\tau-\tau^{(N)}}_{  X_c}
\norm{u}_{  Y_{\sigma_1}}
\longrightarrow0.
\end{align*}
Equivalently, the dyadic sequences
\begin{equation}
x_j^{(N)}:=2^{j\sigma_1}
\norm{\dot\Delta_j\cT Q(\tau^{(N)},\nabla u)}_{L^1_tL^2_x}
\label{eq:xjN-C}
\end{equation}
converge in $\ell^\infty(j\le j_0)$ to the analogous sequence $x_j$ for
$\cT Q(\tau,\nabla u)$. 
 By \eqref{eq:Q-t-c0-C}, each
$x^{(N)}$ belongs to $c_0$ tail.  Since $c_0$ is a closed subspace of $\ell^\infty$, we conclude that
\begin{equation*}
\cT Q(\tau,\nabla u)^{\low}
\in\CL^1\bigl(\Besov{\sigma_1}{2}{c_0}\bigr).
\end{equation*}

The passage from the truncated commutator to the full one follows from
\[
\|[\cT,u\cdot\nabla](\tau-\tau^{(N)})\|_{
\widetilde L^1(\dot B^{\sigma_1}_{2,\infty})}
\le C\|\tau-\tau^{(N)}\|_{X_c}\|u\|_{Y_{\sigma_1}}\longrightarrow0.
\]
Thus the full commutator has a $c_0$ tail by closedness of $c_0$
in $\ell^\infty$.  The other components in \eqref{eq:source-C} are handled by the same argument.  
\end{proof}

\subsection{An automatic Wiegner-type lemma}
\begin{lemma}
\label{lem:auto-wiegner}
Let
\begin{equation*}
-\frac d2<\sigma_1<\sigma<\scal,
\qquad
U_0^{\low}\in\Besov{\sigma_1}{2}{\infty},
\end{equation*}
and recall the definition of $R(t)$ in \eqref{eq:ULR}.  Then
\begin{equation}
{
\norm{R(t)}_{\cH^\sigma}
=o\!\left(t^{-(\sigma-\sigma_1)/2}\right)
(t\to\infty).}
\label{eq:D-result}
\end{equation}
\end{lemma}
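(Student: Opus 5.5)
We split $R=R^{\low}+R^{\high}$ and treat the two parts separately: the high-frequency part is a soft exponential-decay/smallness argument, while the low-frequency part uses the vanishing source tail of Lemma~\ref{lem:c0-source} together with a three-zone dyadic decomposition at the parabolic scale $2^j\simeq t^{-1/2}$.

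\emph{High frequencies.} Since $\norm{R^{\high}(t)}\le\norm{U^{\high}(t)}+\norm{U_L^{\high}(t)}$ in the critical high norms, \eqref{8574555} gives $\norm{U_L^{\high}(t)}\le C\ee^{-ct}$. For the nonlinear part, Theorem~\ref{thm:nonlinear-upper} applied with $s=\sigma_1$ gives $\norm{U^{\high}(t)}\le C\braket t^{-(\scal-\sigma_1)/2}$. Because $\sigma<\scal$, this is $o(t^{-\alpha})$. The strict inequality $\sigma<\scal$ is used precisely here.

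\emph{Low frequencies.} By Duhamel \eqref{eq:DuhamelR} and the dyadic bound \eqref{eq:d-two-s}, for $j\le j_0$,
\[
2^{j\sigma}\norm{\dyad_jR(t)}_2\le C2^{j\sigma}\int_0^t\ee^{-c2^{2j}(t-s)}\norm{\dyad_j\cN(s)}_2\dd s .
\]
Fix a large integer $N$ and let $m=m(t)$ satisfy $2^m\simeq t^{-1/2}$. We split the sum over $j\le j_0$ into three zones.
\begin{enumerate}[label=\textup{(\alph*)},leftmargin=2.2em]
\item \textbf{Zone $j\le m-N$.} Bound the time integral by $\norm{\dyad_j\cN}_{L^1_tL^2}$, which gives $2^{j(\sigma-\sigma_1)}b_j$ with $b_j$ as in \eqref{eq:bj-def}. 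Summing, this zone contributes at most $\big(\sup_{j\le m-N}b_j\big)\,2^{(m-N)\tilde s}\lesssim t^{-\alpha}\sup_{j\le m-N}b_j$, which is $o(t^{-\alpha})$ because $m\to-\infty$ and $b_j\to0$ by Lemma~\ref{lem:c0-source}.
\item \textbf{Zone $|j-m|<N$.} On each of these finitely many blocks we again use $\norm{\dyad_j\cN}_{L^1_tL^2}$, giving a contribution $\lesssim 2^{j\tilde s}b_j\lesssim_N t^{-\alpha}\sup_{j\ge m-N}b_j$, which is $o(t^{-\alpha})$ for the same reason.
\item \textbf{Zone $m+N\le j\le j_0$.} Split the time integral at $t/2$. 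On $(0,t/2)$ the factor $\ee^{-c2^{2j}t/2}$ gives a contribution $\lesssim \sum_{j\ge m+N}2^{j\tilde s}\ee^{-c2^{2j}t/2}\sup_j b_j\lesssim \ee^{-c'2^{2N}}t^{-\alpha}$, which is small once $N$ is large. On $(t/2,t)$ we use the pointwise bound \eqref{eq:hy-low-s}, namely $2^{j\sigma_1}\norm{\dyad_j\cN(s)}_2\lesssim\cA(s)\cD_{\sigma_1}(s)$, together with the decay of $\cD_{\sigma_1}$ and the smallness of $\cA$.
\end{enumerate}

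The third zone on $(t/2,t)$ is the expected main obstacle, because $\cD_{\sigma_1}$ has only integral control. My first attempt there is to write
\[
\int_{t/2}^t\ee^{-c2^{2j}(t-s)}\norm{\dyad_j\cN}\dd s\le C2^{-2j}\sup_{s\ge t/2}\norm{\dyad_j\cN(s)}_2 ,
\]
and to bound $\norm{\cN^{\low}(s)}_{\Bdot^{\sigma-2}_{2,\infty}}$ via \eqref{eq:hy-low-s} with index $\sigma-2$. Here one factor is evaluated at the critical decay $\braket s^{-(\scal-\sigma_1)/2}$ and the other through $U^{\low}\in\Bdot^{\sigma}$ decaying like $s^{-\alpha}$. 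This yields $\cA(s)\lesssim s^{-(\scal-\sigma_1)/2}$ times $s^{-\alpha}$, hence $o(t^{-\alpha})$; the sum over $j\ge m+N$ with weight $2^{j\sigma}2^{-2j}2^{j(2-\sigma)}$ is controlled by taking the $\Bdot_{2,\infty}$ bound plus a slight index shift. If the decaying amplitude is not available in the form needed by \eqref{eq:hy-low-s}, the fallback is to decompose $\cN$ as critical factor times dissipative factor. The critical factor decays at least like $\cE(s)\lesssim s^{-(\scal-\sigma_1)/2}$ for the $u$ and $z$ parts. The passive stress factor does not decay, so for terms involving $\tau$ I would instead rely on the smallness $\cX_0$ and absorb the resulting term $C\cX_0\sup_{s\ge t/2}s^{\alpha}\norm{R^{\low}(s)}$ into the left-hand side via a bootstrap on $\sup_{s\ge T}s^\alpha\norm{R(s)}_{\cH^\sigma}$. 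In that route, the $o(1)$ terms from zones (a)–(b) and the first half of zone (c) drive the limsup to zero.

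Finally, we let $t\to\infty$ first and then $N\to\infty$, which proves \eqref{eq:D-result}.
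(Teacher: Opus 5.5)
\textbf{The gap is zone (c) on $(t/2,t)$.} Your high-frequency argument, zones (a)--(b), and the $(0,t/2)$ half of zone (c) are fine, and they match the paper's three-zone argument. One small slip in (b): you should write $\max_{|j-m|<N}b_j$, not $\sup_{j\ge m-N}b_j$, since only the former tends to zero. You leave the $(t/2,t)$ piece unresolved, and neither of your two attempts closes it.

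\emph{First attempt.} It fails for three separate reasons.
\begin{enumerate}
\item $\cA(s)$ contains $\norm{\tau^\low}_{\Bdot^{\scal}_{2,1}}+\norm{\tau^\high}_{\Bdot^{\scal+1}_{2,1}}$. This does not decay, which is the content of Proposition~\ref{thm:stationary-1}. So $\cA(s)\lesssim s^{-(\scal-\sigma_1)/2}$ is false.
\item The index $\sigma-2$ lies outside the admissible range $(-d/2,d/2-1)$ of \eqref{eq:hy-low-s} whenever $\sigma\le 2-d/2$. In particular this happens always when $d=2,3$.
\item $\cD_{\sigma-2}(s)$ contains $\norm{u^\high(s)}_{\Bdot^{d/2+1}_{2,1}}$. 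For this quantity only $L^1_t$ control is available, not a pointwise rate.
\end{enumerate}

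\emph{Fallback.} In $[\cT,u\cdot\nabla]\tau$ and $\cT Q(\tau,\nabla u)$ the dissipative factor is $u=u_L+R_u$, not $R$. Even granting that the $R_u$ part can be absorbed by the smallness of $\cX_0$, the $u_L$ part, estimated the same way, contributes $C\cX_0\sup_{s\ge t/2}\norm{u_L^\low(s)}_{\Bdot^{\sigma}_{2,1}}\simeq C\cX_0t^{-\alpha}$. That is small but not $o(t^{-\alpha})$. Your bootstrap therefore only yields $\limsup_{t\to\infty} t^{\alpha}\norm{R^\low(t)}_{\Bdot^{\sigma}_{2,1}}\lesssim\cX_0$, not $0$. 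Making it vanish would need two things you do not carry out: a paraproduct analysis exploiting the exponential smallness of $U_L$ above the parabolic scale, and the bound $\int_{t/2}^t\cD\lesssim\cE(t/2)$ for the high-frequency inputs.

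\textbf{The fix: zone (c) needs no Duhamel analysis.} You already have the ingredient you used for the high frequencies. Theorem~\ref{thm:nonlinear-upper} with $s=\sigma_1$ at the level $\scal$, together with Theorem~\ref{thm:li-upp-} at $\sigma=\scal$, gives
\[
\norm{R^\low(t)}_{\Bdot^{\scal}_{2,1}}\le C\braket t^{-(\scal-\sigma_1)/2}.
\]
Set $\gamma:=\scal-\sigma>0$ and recall $2^{-m}\simeq t^{1/2}$. Then
\[
\sum_{m+N\le j\le j_0}2^{j\sigma}\norm{\dyad_jR(t)}_2
\le 2^{-(m+N)\gamma}\norm{R^\low(t)}_{\Bdot^{\scal}_{2,1}}
\le C2^{-N\gamma}t^{\gamma/2}t^{-(\scal-\sigma_1)/2}
=C2^{-N\gamma}t^{-\alpha}.
\]
This term is negligible after letting $t\to\infty$ and then $N\to\infty$. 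This is exactly how the paper closes the proof. Zone (c) is an interpolation between the $c_0$ tail at index $\sigma_1$ (zones (a)--(b)) and the decay at the critical index. It is also the second place, besides the high frequencies, where the strict inequality $\sigma<\scal$ enters.
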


\begin{proof}
By Lemma \ref{lem:c0-source}, there exists a bounded sequence $b_j\to0$ such
that
\begin{equation*}
2^{j\sigma_1}
\norm{\dot\Delta_j\cN(u,\tau)}_{L^1(0,\infty;L^2)}
=b_j,
\qquad j\le j_0.
\label{eq:bj-D}
\end{equation*}
The low-frequency estimate for the forced active semigroup and
\eqref{eq:DuhamelR} imply
\begin{equation}
a_j(t):=2^{j\sigma_1}\norm{\dot\Delta_jR(t)}_2
\le Cb_j
\quad \text{for all }t\ge0,
\label{eq:aj-D}
\end{equation}
and consequently
\begin{equation*}
\sup_{t\ge0}a_j(t)\longrightarrow0
(j\to-\infty).
\end{equation*}
Employing Theorem \ref{thm:nonlinear-upper} and Theorem \ref{thm:li-upp-} at the
critical active level gives
\begin{equation}
\norm{R^{\low}(t)}_{\Besov{\scal}{2}{1}}
\le C\langle t\rangle^{-(\scal-\sigma_1)/2}.
\label{eq:critical-R-D}
\end{equation}
Set
\begin{equation*}
\gamma:=\scal-\sigma>0,
\qquad
\widetilde s:=\sigma-\sigma_1,
\qquad
\alpha_s:=\frac{\tilde{s}}{2}=\alpha.
\end{equation*}

For $t\ge1$, choose $J=J(t)\in\Z$ such that
\begin{equation*}
2^J\le t^{-1/2}<2^{J+1}.
\end{equation*}
Fix $M\ge1$ and split the low-frequency Besov sum into three regions.

For $j\le J-M$, \eqref{eq:aj-D} gives
\begin{align}
\sum_{j\le J-M}2^{j\sigma}\norm{\dot\Delta_jR(t)}_2
&=\sum_{j\le J-M}2^{j\tilde{s}}a_j(t)
\notag\\
&\le C2^{(J-M)\tilde{s}}\sup_{j\le j_0}b_j
\le C2^{-M\tilde{s}}t^{-\alpha_s}.
\label{eq:low-zone-D}
\end{align}

For the moving block $|j-J|\le M$,
\begin{align*}
\sum_{|j-J|\le M}2^{j\sigma}\norm{\dot\Delta_jR(t)}_2
&\le C_M2^{J\tilde{s}}
\max_{|j-J|\le M}b_j
\notag\\
&=o(t^{-\alpha_s}),
\end{align*}
due to $J(t)\to-\infty$ and $b_j\to0$.
Finally, for the case $J+M\le j\le j_0$, using
\eqref{eq:critical-R-D} yields
\begin{align*}
\sum_{j\ge J+M}^{j_0}2^{j\sigma}\norm{\dot\Delta_jR(t)}_2
&\le 2^{-(J+M)\gamma}
\norm{R^{\low}(t)}_{\Besov{\scal}{2}{1}}
\notag\\
&\le C2^{-M\gamma}t^{-\alpha_s}.
\end{align*}
Therefore
\begin{equation}
\limsup_{t\to\infty}
t^{\alpha_s}\norm{R^{\low}(t)}_{\Besov{\sigma}{2}{1}}
\le C\bigl(2^{-M\tilde{s}}+2^{-M\gamma}\bigr).
\label{eq:limsup-D}
\end{equation}
Letting $M\to\infty$ implies
\begin{equation}
\norm{R^{\low}(t)}_{\Besov{\sigma}{2}{1}}
=o(t^{-\alpha_s}).
\label{eq:low-o-D}
\end{equation}
For the high-frequency part, the nonlinear critical decay theorem and the
exponential linear high-frequency estimate imply
(writing $R=(R_u,R_z)$ and denoting the corresponding high-frequency
energy sums by $\mathcal E_h$ and $\mathcal E_{L,h}$)
\begin{align}
\|R_u^h(t)\|_{\dot B^{\scal}_{2,1}}+\|R_z^h(t)\|_{\dot B^{\scal+1}_{2,1}}
&\le \mathcal E_h(t)+\mathcal E_{L,h}(t)
\notag\\
&\le C\langle t\rangle^{-(\scal-\sigma_1)/2}+Ce^{-ct}
=o(t^{-(\sigma-\sigma_1)/2}),
\label{eq:high-o-D}
\end{align}
where the last equality comes from  $\sigma<\scal$.  Combining
\eqref{eq:low-o-D} and \eqref{eq:high-o-D} gives
\eqref{eq:D-result}.
\end{proof}
\subsection{Proof of \cref{thm:non-trans}}
\begin{proof}[Proof of \cref{thm:non-trans}]
\textbf{Part \textup{(i)}.}
Assume \eqref{eq:negative-datum}.     \cref{thm:negative-p,thm:nonlinear-upper} give
\[
\norm{U(t)}_{\cH^\sigma}
\le C\langle t\rangle^{-(\sigma-\sigma_1)/2},
\]
which is \eqref{eq:upper-nonlinear}.
{Assume \eqref{eq:upper-nonlinear}, using \Cref{lem:same-order} 
yields $U_0^{\low}\in\Besov{\sigma_1}{2}{\infty}$. This is exactly
\eqref{eq:negative-datum}.
}

\textbf{Part \textup{(ii)}.}
Once part~\textup{(i)} has yielded
$U_0^{\low}\in\Besov{\sigma_1}{2}{\infty}$, applying
Lemma~\ref{lem:auto-wiegner} (which requires $\sigma<\scal$) gives
\[
\norm{U(t)-U_L(t)}_{\cH^\sigma}
=o\!\left(t^{-(\sigma-\sigma_1)/2}\right),
\]
which is \eqref{eq:auto-wiegner}.

\textbf{Part \textup{(iii)}}
Assume \eqref{eq:nondeg-datum}. By Definition \ref{def:frakB},
$U_0^{\low}\in\Besov{\sigma_1}{2}{\infty}$ implies
$U_0^{\low}\in\Besov{\sigma}{2}{1}$. \Cref{thm:linear-t-s} gives, for
sufficiently large $t$,
\begin{equation}
\norm{U_L^{\low}(t)}_{\Besov{\sigma}{2}{1}}
\ge c_\sharp t^{-\alpha}.
\label{eq:linear-lower}
\end{equation}
By part~\textup{(ii)},
\[
\norm{R^{\low}(t)}_{\Besov{\sigma}{2}{1}}
=o(t^{-\alpha}).
\]
Therefore
\[
\norm{U(t)}_{\cH^\sigma}
\ge \norm{U^{\low}(t)}_{\Besov{\sigma}{2}{1}}
\ge \bigl(c_\sharp-o(1)\bigr)t^{-\alpha}
\ge \frac{c_\sharp}{2}t^{-\alpha}
\]
for all sufficiently large $t$.  The upper bound follows from part~\textup{(i)}.
This proves \eqref{eq:two-sided-nonlinear}.

Assume \eqref{eq:two-sided-nonlinear}. Apply the eventual-time assertion of
\cref{lem:same-order} to the upper half of \eqref{eq:two-sided-nonlinear}.
This gives, without using part~\textup{(i)},
\begin{equation}
U_0^{\low}\in\Besov{\sigma_1}{2}{\infty}.
\label{eq:neg-first}
\end{equation}
then, it holds
$U_0^{\low}\in\Besov{\sigma}{2}{1}$.

\Cref{thm:nonlinear-upper} at the critical active level gives
\begin{equation*}
\|u^h(t)\|_{\dot B^{\scal}_{2,1}}+\|z^h(t)\|_{\dot B^{\scal+1}_{2,1}}
\le C t^{-(\scal-\sigma_1)/2}
=o(t^{-\alpha}),
\end{equation*}
  Hence the low-frequency part itself has the same lower order:
\begin{equation*}
\norm{U^{\low}(t)}_{\Besov{\sigma}{2}{1}}
\ge c t^{-\alpha}-o(t^{-\alpha})
\ge \frac c2t^{-\alpha}
\end{equation*}
for sufficiently large $t$.  Part~\textup{(ii)} using Lemma \ref{lem:auto-wiegner} yields
\begin{equation*}
\norm{U_L^{\low}(t)}_{\Besov{\sigma}{2}{1}}
\ge \norm{U^{\low}(t)}_{\Besov{\sigma}{2}{1}}
-\norm{R^{\low}(t)}_{\Besov{\sigma}{2}{1}}
\ge \frac c4t^{-\alpha}
\end{equation*}
for all sufficiently large $t$.   The necessity  of
Theorem \ref{thm:linear-t-s} therefore implies
$U_0^{\low}\in\frakB^{\sigma_1}_{2,\infty}$.  This completes the proof.
\end{proof}

\begin{remark}\label{rem:critical-level}
At $\sigma=\scal$, \cref{thm:non-trans} \textup{(i)} still gives the
upper decay equivalence
\begin{equation}
U_0^{\low}\in\Besov{\sigma_1}{2}{\infty}
\quad\Longleftrightarrow\quad
\norm{U(t)}_{\cH^{\scal}}
\lesssim \langle t\rangle^{-(\scal-\sigma_1)/2}.
\label{eq:critical-upp}
\end{equation}
Because the proof of Lemma~\ref{lem:auto-wiegner} relies on the strictly positive gap between $\scal$ and $\sigma$, it does not, by itself, establish
\[
\norm{U(t)-U_L(t)}_{\cH^{\scal}}
=o\!\left(t^{-(\scal-\sigma_1)/2}\right).
\]
Consequently, a two-sided characterization at the critical target level still requires a separate high-frequency or remainder separation estimate.
\end{remark}

\appendix
\section{{Paradifferential preliminaries}}\label{app:para}
In this section, we recall some definitions, notions, and properties of Besov spaces, which can be used repeatedly throughout this paper.
\subsection{Homogeneous Littlewood-Paley decomposition}

Choose radial functions $\chi,\varphi\in C_c^\infty(\R^d)$ such that
\[
\supp\chi\subset\{ |\xi|\le4/3\},\qquad
\supp\varphi\subset\{3/4\le|\xi|\le8/3\},
\]
and
\[
\chi(\xi)+\sum_{j\ge0}\varphi(2^{-j}\xi)=1,
\qquad
\sum_{j\in\Z}\varphi(2^{-j}\xi)=1\quad(\xi\ne0).
\]
Set
\[
\dyad_jf=\cF^{-1}\bigl(\varphi(2^{-j}\cdot)\widehat f\bigr),
\qquad
\lowcut_jf=\sum_{k<j}\dyad_kf.
\]
We work in the standard homogeneous distribution space $\cS'_h$; see \cite[Chapter~2]{BCD2011}.

For $s\in\R$ and $1\le r\le\infty$,
\begin{equation*} 
\norm f_{\Bdot^s_{2,r}}
=\norm{\bigl(2^{js}\norm{\dyad_jf}_{L^2}\bigr)_{j\in\Z}}_{\ell^r}.
\end{equation*}

For $T>0$ and $1\le q\le\infty$, the Chemin-Lerner norm is
\begin{equation*}
\norm f_{\CL^q_T(\Bdot^s_{2,r})}
=\norm{\bigl(2^{js}\norm{\dyad_jf}_{L^q(0,T;L^2)}\bigr)_{j\in\Z}}_{\ell^r}.
\end{equation*}
Fix an integer $j_0$ once and for all. It will be chosen sufficiently negative in the low-frequency semigroup analysis. Define
\begin{equation*} 
f^\low=\sum_{j\le j_0}\dyad_jf,
\qquad f^\high=f-f^\low.
\end{equation*}
We use the corresponding seminorms
\[
\norm f_{\Bdot^{s,\low}_{2,r}}
=\norm{\bigl(2^{js}\norm{\dyad_jf}_{L^2}\bigr)_{j\le j_0}}_{\ell^r},
\qquad
\norm f_{\Bdot^{s,\high}_{2,r}}
=\norm{\bigl(2^{js}\norm{\dyad_jf}_{L^2}\bigr)_{j\ge j_0-1}}_{\ell^r}.
\]
Define the two-index hybrid Besov norm
\begin{equation}\label{eq:h-two-in}
\norm f_{\Bdot^{s_\ell,s_h}_{2,1}}
:=\sum_{j\le j_0}2^{js_\ell}\norm{\dyad_jf}_2
+\sum_{j\ge j_0-1}2^{js_h}\norm{\dyad_jf}_2,
\end{equation}
where the finitely many overlapping shells may be counted in both sums; this only changes the norm by an equivalent constant.

\subsection{Bernstein, product, and commutator estimates}
We use Bony's decomposition
\[
\begin{aligned}
	fg &= \dot T_f g + \dot T_g f + \dot R(f,g),\\
	\dot T_f g &= \sum_k \dot S_{k-1} f \, \dot\Delta_k g,\\
	\dot R(f,g) &= \sum_k \dot\Delta_k f \,
	(\dot\Delta_{k-1}+\dot\Delta_k+\dot\Delta_{k+1})g.
\end{aligned}
\]
All estimates are first established for finite dyadic sums and then
extended by convergence in the indicated homogeneous spaces.
\begin{lemma}[\cite{BCD2011}]\label{lem:m-product}
Let $-d/2<s<d/2$ and $r\in\{1,\infty\}$. Then
\begin{equation*} 
\norm{fg}_{\Bdot^s_{2,r}}
\le C\norm f_{\Bdot^{d/2}_{2,1}}\norm g_{\Bdot^s_{2,r}}.
\end{equation*}
The same estimate holds with all norms restricted to low-frequency on the left and with the full norms on the right.
\end{lemma}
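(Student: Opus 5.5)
The plan is to prove the estimate dyadically via Bony's decomposition, following \cite{BCD2011}. I first treat $r=\infty$; the case $r=1$ is identical with $\ell^\infty$ sums replaced by $\ell^1$ sums. Write
\[
fg=\dot T_fg+\dot T_gf+\dot R(f,g),
\]
and estimate each piece in $\Bdot^s_{2,r}$ by $\norm f_{\Bdot^{d/2}_{2,1}}\norm g_{\Bdot^s_{2,r}}$. The hypothesis $-d/2<s<d/2$ is needed at exactly two points: $s<d/2$ for $\dot T_gf$, and $s+d/2>0$ for $\dot R(f,g)$.

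\emph{The paraproduct $\dot T_fg$.} Each term $\dot S_{k-1}f\,\dyad_kg$ is spectrally supported in an annulus of size $2^k$. Using the embedding $\Bdot^{d/2}_{2,1}\hookrightarrow L^\infty$ and Bernstein's inequality, $\norm{\dot S_{k-1}f}_{L^\infty}\lesssim\norm f_{\Bdot^{d/2}_{2,1}}$. Hence
\[
2^{js}\norm{\dyad_j\dot T_fg}_2\lesssim\sum_{|k-j|\le 4}2^{ks}\norm{\dyad_kg}_2\,\norm f_{\Bdot^{d/2}_{2,1}},
\]
and taking the $\ell^r$ norm in $j$ gives the bound. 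This piece needs no restriction on $s$.

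\emph{The paraproduct $\dot T_gf$.} The terms are again annular at scale $2^k$. By Bernstein,
\[
\norm{\dot S_{k-1}g}_{L^\infty}\lesssim\sum_{m<k-1}2^{md/2}\norm{\dyad_mg}_2
=\sum_{m<k-1}2^{m(d/2-s)}\,2^{ms}\norm{\dyad_mg}_2\lesssim 2^{k(d/2-s)}\norm g_{\Bdot^s_{2,\infty}}.
\]
The geometric sum converges because $s<d/2$. Multiplying by $\norm{\dyad_kf}_2$ and the weight $2^{ks}$ gives $2^{kd/2}\norm{\dyad_kf}_2\,\norm g_{\Bdot^s_{2,\infty}}$. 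Summing over $|k-j|\le 4$ and taking $\ell^r$ norms (with $\ell^r\supset\ell^1$) is bounded by $\norm f_{\Bdot^{d/2}_{2,1}}\norm g_{\Bdot^s_{2,r}}$. For $r=1$ I use the $\ell^1$ summability of $f$ and $\norm g_{\Bdot^s_{2,\infty}}\le\norm g_{\Bdot^s_{2,1}}$.

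\emph{The remainder $\dot R(f,g)$.} This is the main obstacle, because the blocks are balls rather than annuli. For $k\ge j-3$, Bernstein from $L^1$ to $L^2$ gives
\[
\norm{\dyad_j(\dyad_kf\,\widetilde{\dyad}_kg)}_2\lesssim 2^{jd/2}\norm{\dyad_kf}_2\norm{\widetilde{\dyad}_kg}_2.
\]
Multiplying by $2^{js}$ yields
\[
2^{js}\norm{\dyad_j\dot R}_2\lesssim\sum_{k\ge j-3}2^{-(k-j)(s+d/2)}\,\bigl(2^{kd/2}\norm{\dyad_kf}_2\bigr)\bigl(2^{ks}\norm{\widetilde{\dyad}_kg}_2\bigr).
\]
This is a convolution with the kernel $2^{-n(s+d/2)}\mathbf 1_{n\ge-3}$, which lies in $\ell^1$ precisely because $s+d/2>0$. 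Young's inequality for series, together with $\ell^1\cdot\ell^r\subset\ell^r$, closes this term.

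\emph{Low-frequency restriction.} For the restricted version I apply the same three estimates only for $j\le j_0$ on the left. The right-hand sides involve the full sequences, so the full norms appear on the right automatically. Finally, all sums are finite dyadic truncations, and I pass to the limit in $\cS'_h$.
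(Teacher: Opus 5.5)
Your proposal is correct and is essentially the argument the paper relies on: the paper gives no proof of its own and cites \cite{BCD2011}, whose proof is this same Bony decomposition, with $\dot T_fg$ handled by $\Bdot^{d/2}_{2,1}\hookrightarrow L^\infty$, $\dot T_gf$ by the bound $\norm{\dot S_{k-1}g}_{L^\infty}\lesssim 2^{k(d/2-s)}\norm g_{\Bdot^s_{2,\infty}}$ (which uses $s<d/2$), and $\dot R(f,g)$ by the $L^1\to L^2$ Bernstein gain $2^{jd/2}$ together with $s+d/2>0$. Your dyadic bounds hold for each fixed $j$ with the full norms on the right, so the low-frequency version follows at once, and your overlap ranges $|k-j|\le4$ and $k\ge j-3$ are consistent with the paper's cutoffs.
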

 
\begin{lemma}[\cite{BCD2011}]\label{lem:bal-p-1}
Let $-d/2<s<d/2-1$. Then
\begin{equation*} 
\norm{fg}_{\Bdot^s_{2,\infty}}
\le C\norm f_{\Bdot^{d/2-1}_{2,1}}\norm g_{\Bdot^{s+1}_{2,\infty}}.
\end{equation*}

In particular, for the bilinear map $Q$  in \eqref{eq:parametric-GOB}
and any smooth homogeneous order-zero multiplier $M(D)$,
\begin{equation}\label{eq:Q-product}
\norm{M(D)Q(F,\nabla v)}_{\Bdot^s_{2,\infty}}
\le C\norm F_{\Bdot^{d/2-1}_{2,1}}\norm v_{\Bdot^{s+2}_{2,\infty}}.
\end{equation}
\end{lemma}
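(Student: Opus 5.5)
The statement to prove is Lemma~\ref{lem:bal-p-1}: the product estimate $\norm{fg}_{\Bdot^s_{2,\infty}}\lesssim\norm f_{\Bdot^{d/2-1}_{2,1}}\norm g_{\Bdot^{s+1}_{2,\infty}}$ for $-d/2<s<d/2-1$, and its consequence \eqref{eq:Q-product}. I would prove it with Bony's decomposition $fg=\dot T_fg+\dot T_gf+\dot R(f,g)$, estimating each piece in $\Bdot^s_{2,\infty}$ using only Bernstein's inequality and the standard continuity results for paraproducts and remainders in \cite{BCD2011}.

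\emph{First paraproduct.} The embedding $\Bdot^{d/2-1}_{2,1}\hookrightarrow\Bdot^{-1}_{\infty,1}$ gives $\norm{\dot S_{k-1}f}_{L^\infty}\lesssim\sum_{m<k}2^{m}\,2^{m(d/2-1)}\norm{\dyad_mf}_2\lesssim 2^{k}\norm f_{\Bdot^{d/2-1}_{2,1}}$. Hence
\[
2^{ks}\norm{\dot S_{k-1}f\,\dyad_kg}_2\lesssim 2^{k(s+1)}\norm{\dyad_kg}_2\norm f_{\Bdot^{d/2-1}_{2,1}},
\]
and almost orthogonality yields $\norm{\dot T_fg}_{\Bdot^s_{2,\infty}}\lesssim\norm f_{\Bdot^{d/2-1}_{2,1}}\norm g_{\Bdot^{s+1}_{2,\infty}}$. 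This piece needs no restriction on $s$.

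\emph{Second paraproduct.} Here the hypothesis $s<d/2-1$ enters. Since $s+1-d/2<0$, the low-frequency sum converges and Bernstein gives
\[
\norm{\dot S_{k-1}g}_{L^\infty}\lesssim\sum_{m<k}2^{md/2}\norm{\dyad_mg}_2\lesssim 2^{k(d/2-s-1)}\norm g_{\Bdot^{s+1}_{2,\infty}}.
\]
Multiplying by $2^{ks}\norm{\dyad_kf}_2$ produces $2^{k(d/2-1)}\norm{\dyad_kf}_2$, which is summable. This gives the bound in $\Bdot^s_{2,\infty}$, in fact even in $\Bdot^s_{2,1}$.

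\emph{Remainder.} This is the step I expect to be the only delicate one. The remainder has total regularity $(d/2-1)+(s+1)=s+d/2>0$, and the positivity of this index is exactly what is needed. Using $\norm{\dyad_j h}_2\lesssim 2^{jd/2}\norm h_{L^1}$,
\[
2^{js}\norm{\dyad_j\dot R(f,g)}_2\lesssim\sum_{k\ge j-N_0}2^{(j-k)(s+d/2)}\,2^{k(d/2-1)}\norm{\dyad_kf}_2\,2^{k(s+1)}\norm{\widetilde\dyad_kg}_2 .
\]
The right-hand side is at most $\norm f_{\Bdot^{d/2-1}_{2,1}}\norm g_{\Bdot^{s+1}_{2,\infty}}$, because $s+d/2>0$ makes the weight $2^{(j-k)(s+d/2)}$ at most $1$ on the range $k\ge j-N_0$. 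The three estimates together prove the product bound.

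\emph{Estimate \eqref{eq:Q-product}.} Each component of $Q(F,\nabla v)$ is a finite linear combination of products $F_{ab}\,\partial_mv_n$. Apply the product bound with $g=\partial_mv_n$, and use $\norm{\partial v}_{\Bdot^{s+1}_{2,\infty}}\lesssim\norm v_{\Bdot^{s+2}_{2,\infty}}$. Finally, $M(D)$ is a smooth homogeneous multiplier of order zero, so it acts boundedly on each dyadic block uniformly in $j$ and hence is bounded on $\Bdot^s_{2,\infty}$.
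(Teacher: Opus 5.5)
Your proof is correct. The paper gives no argument of its own for this lemma and simply cites \cite{BCD2011}, and your Bony-decomposition proof is the standard argument behind that citation: $s<d/2-1$ enters through the convergent low-frequency sum in $\dot T_g f$, and $s+d/2>0$ controls the remainder. One cosmetic slip: on the range $k\ge j-N_0$ the weight $2^{(j-k)(s+d/2)}$ is bounded by the constant $2^{N_0(s+d/2)}$ rather than by $1$, which does not affect the conclusion.
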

\begin{lemma}[\cite{BCD2011}]\label{lem:tran-c-1}
Let $-d/2<s<d/2+1$, $v$ be divergence-free, and $f\in\Bdot^s_{2,\infty}$. There exists a bounded nonnegative sequence $(c_j)_{j\in\Z}$ with $\sup_jc_j\le1$ such that
\begin{equation*}
\norm{[\dyad_j,v\cdot\nabla]f}_{L^2}
\le Cc_j2^{-js}\norm{\nabla v}_{\Bdot^{d/2}_{2,1}}\norm f_{\Bdot^s_{2,\infty}}.
\end{equation*}
If $f\in\dot B^s_{2,1}$, the estimate holds with
$\|f\|_{\dot B^s_{2,1}}$ on the right and
$(c_j)\in\ell^1$, $\|c\|_{\ell^1}\le1$.
\end{lemma}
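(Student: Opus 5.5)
The statement to prove is Lemma~\ref{lem:tran-c-1}: a commutator estimate of $\dyad_j$ with a divergence-free transport field. The plan is the classical paradifferential argument from \cite{BCD2011}.

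\textbf{Step 1: Bony splitting.} Writing $v\cdot\nabla f=\sum_a v^a\partial_af$, decompose
\[
[\dyad_j,v\cdot\nabla]f=[\dyad_j,\dot T_{v^a}]\partial_af+\dyad_j\big(\dot T_{\partial_af}v^a+\dot R(v^a,\partial_af)\big)-\big(\dot T_{\partial_a\dyad_jf}v^a+\dot R(v^a,\partial_a\dyad_jf)\big).
\]
This gives four kinds of terms.

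\textbf{Step 2: the paraproduct commutator.} This is the main term. By spectral localization, $[\dyad_j,\dot T_{v^a}]\partial_af=\sum_{|k-j|\le4}[\dyad_j,\dot S_{k-1}v^a]\partial_a\dyad_kf$. The kernel of $\dyad_j$ is $2^{jd}h(2^j\cdot)$, and its first moment is $\lesssim2^{-j}$. The mean value formula (the same device used in Step~2 of Lemma~\ref{lem:same-order}) then gives
\[
\|[\dyad_j,\dot S_{k-1}v]\partial\dyad_kf\|_2\lesssim2^{-j}\|\nabla v\|_{L^\infty}2^k\|\dyad_kf\|_2.
\]
Using $\|\nabla v\|_{L^\infty}\lesssim\|\nabla v\|_{\Bdot^{d/2}_{2,1}}$ and $2^{ks}\|\dyad_kf\|_2\le\|f\|_{\Bdot^s_{2,\infty}}$ yields the bound with $c_j\lesssim1$. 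In the $\ell^1$ case one sets $c_j:=\sum_{|k-j|\le4}2^{ks}\|\dyad_kf\|_2/\|f\|_{\Bdot^s_{2,1}}$.

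\textbf{Step 3: the remaining terms.}

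For $\dyad_j\dot T_{\partial f}v$: the sum runs over $|k-j|\le4$, and
\[
\|\dot S_{k-1}\partial f\|_{L^\infty}\lesssim\sum_{k'\le k-2}2^{k'(d/2+1-s)}2^{k's}\|\dyad_{k'}f\|_2\lesssim2^{k(d/2+1-s)}\|f\|_{\Bdot^s_{2,\infty}}.
\]
The geometric sum converges because $s<d/2+1$; this is the first place the hypothesis on $s$ enters. Combining with $\|\dyad_kv\|_2\lesssim2^{-k(d/2+1)}2^{k(d/2+1)}\|\dyad_kv\|_2$ (the last factor being controlled by $\|\nabla v\|_{\Bdot^{d/2}_{2,1}}$) gives $2^{-js}$ times an $\ell^1$ sequence.

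For the remainder $\dyad_j\dot R(v,\partial f)$: use $\diver v=0$ to write $\dot R(v^a,\partial_af)=\partial_a\dot R(v^a,f)$. Then Bernstein in $L^1$ gives
\[
\|\dyad_j\partial_a\dot R\|_2\lesssim2^{j(1+d/2)}\sum_{k\ge j-3}\|\dyad_kv\|_2\|\dyad_kf\|_2\lesssim2^{j(1+d/2)}\sum_{k\ge j-3}2^{-k(d/2+1+s)}\big(2^{k(d/2+1)}\|\dyad_kv\|_2\big)\|f\|_{\Bdot^s_{2,\infty}}.
\]
The sum over $k\ge j-3$ is summable exactly when $s>-d/2-1$, and this is the step where the lower bound on $s$ is used. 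Note that $s>-d/2$ suffices, and the divergence structure is what allows this.

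The last two terms, $\dot T_{\partial\dyad_jf}v$ and $\dot R(v,\partial\dyad_jf)$, only involve $\dyad_jf$. They are bounded directly by $\|\nabla v\|_{L^\infty}\|\dyad_jf\|_2$ (using $\|\dot S_{k-1}\partial\dyad_jf\|_\infty\lesssim2^{j}\cdots$ with $k\sim j$, and low-frequency Bernstein).

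\textbf{Step 4: summing.} Summing the four contributions gives a sequence $c_j$ that is a convolution of an $\ell^1$ sequence, coming from $v$, with a sequence bounded by $\|f\|_{\Bdot^s_{2,\infty}}$. By Young's inequality for convolutions, $c_j$ lies in $\ell^\infty$, and in $\ell^1$ when $f\in\Bdot^s_{2,1}$; normalizing gives $\sup_j c_j\le1$, respectively $\|c\|_{\ell^1}\le1$.

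The main obstacle is the remainder term at the endpoint range of negative $s$, and the fix is the divergence-free rewriting used in Step~3. All estimates are first carried out on finite dyadic sums and then extended by density.
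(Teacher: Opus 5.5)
Your proof is correct and follows the standard argument of \cite[Lemma~2.100]{BCD2011}, which is the source the paper cites without reproducing a proof: Bony splitting, the first-moment bound $\int|y|\,|2^{jd}h(2^jy)|\,\dd y\lesssim2^{-j}$ for the paraproduct commutator, and the rewriting $\dot R(v^a,\partial_af)=\partial_a\dot R(v^a,f)$ from $\diver v=0$, which even gives the wider range $s>-d/2-1$. One small correction: in $\dot T_{\partial_a\dyad_jf}v^a$ the sum runs over all $k\ge j+1$, not only $k\sim j$; the claimed bound $\lesssim\norm{\nabla v}_{L^\infty}\norm{\dyad_jf}_2$ still holds by $\norm{\dyad_kv}_{L^\infty}\lesssim2^{-k}\norm{\nabla v}_{L^\infty}$ and the geometric sum $\sum_{k\ge j+1}2^{j-k}\lesssim1$.
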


\begin{lemma}[cf.\ \cite{BCD2011}]\label{lem:zero-o-c}
Let $T(D)$ be a smooth homogeneous Fourier multiplier of order zero. If $-d/2<s<d/2-1$, then
\begin{equation}\label{665453}
\norm{[T(D),v\cdot\nabla]F}_{\Bdot^s_{2,\infty}}
\le C\norm F_{\Bdot^{d/2-1}_{2,1}}
\norm v_{\Bdot^{s+2}_{2,\infty}}.
\end{equation}
\end{lemma}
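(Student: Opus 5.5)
We prove the commutator estimate \eqref{665453} by Bony's decomposition of both $v\cdot\nabla F$ and $v\cdot\nabla(T(D)F)$. We write
\[
[T(D),v\cdot\nabla]F=\sum_{a=1}^d\Bigl([T(D),\dot T_{v^a}]\partial_aF+T(D)\bigl(\dot T_{\partial_aF}v^a+\dot R(v^a,\partial_aF)\bigr)-\dot T_{\partial_aT(D)F}v^a-\dot R(v^a,\partial_aT(D)F)\Bigr),
\]
using that $T(D)$ commutes with $\partial_a$ and is bounded on every $\Bdot^s_{2,r}$. The four non-commutator pieces are ordinary paraproduct and remainder terms. They are estimated exactly as in Lemma~\ref{lem:bal-p-1}, with $F$ (or $T(D)F$, which has the same norm) in $\Bdot^{d/2-1}_{2,1}$ and $v\in\Bdot^{s+2}_{2,\infty}$.

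For the paraproduct $\dot T_{\nabla F}v$ we bound $\|\dot S_{k-1}\nabla F\|_{L^\infty}\lesssim\sum_{m<k}2^{m(d/2+1)}\|\dyad_mF\|_2\lesssim 2^{2k}\|F\|_{\Bdot^{d/2-1}_{2,1}}$. Multiplying by $2^{ks}\|\dyad_kv\|_2\le 2^{-2k}\|v\|_{\Bdot^{s+2}_{2,\infty}}$ gives the bound in $\Bdot^{s}_{2,\infty}$ with no restriction on $s$. For the remainder $\dot R(v,\nabla F)$ we first rewrite it as $\diver\dot R(v,F)$, which is allowed because $v$ is divergence-free (the paper uses $\diver u=0$ throughout, and the lemma is applied only with $v=u$). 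Then
\[
2^{js}\|\dyad_j\diver\dot R(v,F)\|_2\lesssim 2^{j(s+1+d/2)}\sum_{k\ge j-2}\|\dyad_kv\|_2\|\widetilde\dyad_kF\|_2\lesssim \sum_{k\ge j-2}2^{-(k-j)(s+1+d/2)}2^{-k}\cdots .
\]
Here the weights combine to $2^{k(-(s+2)-(d/2-1))}$. The sum converges precisely because $s+d/2>0$; this is where the lower bound $s>-d/2$ enters.

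The genuine commutator $[T(D),\dot T_{v^a}]\partial_aF=\sum_k[T(D),\dot S_{k-1}v^a]\partial_a\dyad_kF$ is treated with the first-moment argument already used in Step~2 of Lemma~\ref{lem:same-order}. Since $\dot S_{k-1}v\,\dyad_kF$ is supported in an annulus $\sim2^k$, we may replace $T(D)$ by $T_k:=T(D)\chi(2^{-k}D)$, whose kernel satisfies $\int|y||K_k(y)|\,dy\lesssim2^{-k}$. The mean value formula then gives
\[
\|[T_k,\dot S_{k-1}v]\partial_a\dyad_kF\|_2\lesssim 2^{-k}\|\nabla\dot S_{k-1}v\|_{L^\infty}2^k\|\dyad_kF\|_2=\|\nabla\dot S_{k-1}v\|_{L^\infty}\|\dyad_kF\|_2 .
\]
Bernstein yields $\|\nabla\dot S_{k-1}v\|_{L^\infty}\lesssim\sum_{m<k}2^{m(1+d/2-s-2)}\|v\|_{\Bdot^{s+2}_{2,\infty}}$. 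This geometric sum is dominated by its top term $2^{k(d/2-1-s)}$ exactly because $s<d/2-1$, which is where the upper restriction enters. Multiplying by $2^{ks}\|\dyad_kF\|_2\le 2^{-k(d/2-1)}c_k\|F\|_{\Bdot^{d/2-1}_{2,1}}$ gives a uniform bound in $k$. Since only $|j-k|\le N_0$ contributes to $\dyad_j$, we obtain \eqref{665453}.

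The main obstacle is this low--high commutator term. Without the commutator structure, the derivative would fall on the high-frequency factor $F$, costing $2^k$, and the estimate would fail. The kernel-moment bound is what transfers that derivative onto $v$. The other delicate point is the remainder term, where divergence-freeness is needed so that the derivative can be moved outside; otherwise $s>-d/2$ would have to be strengthened to $s>-d/2+1$.
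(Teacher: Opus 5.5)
Your argument is the paper's own. You use the same Bony splitting of $T(D)(v\cdot\nabla F)-v\cdot\nabla T(D)F$ into the paraproduct commutator $[T(D),\dot T_{v^a}]\partial_aF$ plus four non-commutator terms. You use the same first-moment bound $\int|y||K_k(y)|\,dy\lesssim2^{-k}$ to move the derivative onto $\dot S_{k-1}v$. And $s<d/2-1$ enters at the same place, through $\norm{\nabla\dot S_{k-1}v}_{L^\infty}\lesssim2^{k(d/2-1-s)}\norm{v}_{\Bdot^{s+2}_{2,\infty}}$.

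The one point to correct is the remainder. The lemma does not assume $\diver v=0$, and that assumption is not needed. Estimate $\dot R(v,\nabla F)$ directly, letting the derivative fall on $\widetilde{\dyad}_kF$ at a cost of $2^k$. With $(c_k)\in\ell^1$ this gives
\[
2^{js}\norm{\dyad_j\dot R(v,\nabla F)}_{2}\lesssim\sum_{k\ge j-2}2^{-(k-j)(s+d/2)}c_k\,\norm{v}_{\Bdot^{s+2}_{2,\infty}}\norm{F}_{\Bdot^{d/2-1}_{2,1}} .
\]
This converges exactly when $s>-d/2$, which is the paper's ``condition $s+d/2>0$ for the remainder''. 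The same bound handles $\dot R(v,\nabla T(D)F)$ and $T(D)\dot R(v,\nabla F)$.

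Your divergence form $\diver\dot R(v,F)$ actually produces the better weight $2^{-(k-j)(s+1+d/2)}$. The factor $2^{-k}$ in your display is spurious, and this weight would only require $s>-d/2-1$. So your claim that without divergence-freeness one would need $s>-d/2+1$ is backwards. The extra hypothesis does no harm in the paper's applications, which always take $v=u$. Still, drop it, since the direct bound proves the lemma as stated for general $v$.
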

\begin{proof}[Sketch of proof]
Decompose $v\cdot\nabla F=\dot T_v\nabla F+\dot T_{\nabla F}v+\dot R(v,\nabla F)$. The term $[T(D),\dot T_v]\nabla F$ is a paraproduct commutator with an order-zero multiplier: writing $\dyad_k[T(D),\dot S_{k-1}v]\dyad_k\nabla F$ as a convolution with the kernel $K_k$ of $T(D)\widetilde{\dyad}_k$ and using $\int|y||K_k(y)|\,\dd y\lesssim2^{-k}$, one obtains
\[
\norm{[T(D),\dot T_v]\nabla F}_{\Bdot^s_{2,\infty}}
\lesssim\sup_k 2^{-k}\norm{\nabla\dot S_{k-1}v}_{L^\infty}\,2^{ks}\norm{\dyad_k\nabla F}_2
\lesssim\norm v_{\Bdot^{s+2}_{2,\infty}}\norm F_{\Bdot^{d/2-1}_{2,1}},
\]
where $\norm{\nabla\dot S_{k-1}v}_{L^\infty}\lesssim2^{k(d/2-1-s)}\norm v_{\Bdot^{s+2}_{2,\infty}}$ requires $s<d/2-1$. The remaining four terms, $T(D)\dot T_{\nabla F}v$, 
$\dot T_{\nabla(T(D)F)}v$, $T(D)\dot R(v,\nabla F)$ and $\dot R(v,\nabla T(D)F)$, carry no commutator structure and are bounded by the standard paraproduct and remainder estimates \cite[Section~2.6]{BCD2011}, using the embedding $\Bdot^{d/2-2}_{2,1}\hookrightarrow\Bdot^{-2}_{\infty,1}$ for $\nabla F$ and the condition $s+d/2>0$ for the remainder.
\end{proof}

\section{Critical hybrid product estimates}\label{app:critical-hybrid-products}
This appendix {gives} the hybrid product and localized commutator
estimates used in \eqref{63634}. The
high-frequency stress index is one derivative above the velocity
index, so the placement of each projection and derivative is made explicit.
\begin{lemma}[\cite{ChenHao2019}]\label{lem:critical-h-m}

Let $d\ge2$.  Then
\begin{equation*} 
\norm{fg}_{\Bdot^{d/2-1,d/2}_{2,1}}
\le C\norm f_{\Bdot^{d/2}_{2,1}}
\norm g_{\Bdot^{d/2-1,d/2}_{2,1}}.
\end{equation*}
The same estimate holds componentwise for vector and tensor products.
\end{lemma}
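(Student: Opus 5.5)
We prove the estimate by Bony's decomposition $fg=\dot T_fg+\dot T_gf+\dot R(f,g)$, estimating each piece separately on the low-frequency range $j\le j_0$ (weight $2^{j(d/2-1)}$) and on the high-frequency range $j\ge j_0-1$ (weight $2^{jd/2}$). The key observation is that the hybrid norm of $g$ controls the full $\Bdot^{d/2-1}_{2,1}$ norm of $g$, because on high frequencies $2^{j(d/2-1)}\le C_{j_0}2^{jd/2}$. It also controls the $\Bdot^{d/2}_{2,1}$ norm of $g^\high$. The low-frequency part of the target norm is an instance of the classical product law $\Bdot^{d/2}_{2,1}\times\Bdot^{d/2-1}_{2,1}\to\Bdot^{d/2-1}_{2,1}$, which is Lemma~\ref{lem:m-product} with $s=d/2-1\in(-d/2,d/2)$. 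Restricting to $j\le j_0$ on the left, this gives
\[
\sum_{j\le j_0}2^{j(d/2-1)}\norm{\dyad_j(fg)}_2\le C\norm f_{\Bdot^{d/2}_{2,1}}\norm g_{\Bdot^{d/2-1}_{2,1}}\le C\norm f_{\Bdot^{d/2}_{2,1}}\norm g_{\Bdot^{d/2-1,d/2}_{2,1}}.
\]

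For the high-frequency part we need $\sum_{j\ge j_0-1}2^{jd/2}\norm{\dyad_j(fg)}_2$, and we treat the three Bony pieces in turn.

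\textbf{Paraproduct $\dot T_fg$.} Here $\dyad_j\dot T_fg$ involves only $\dot S_{k-1}f\,\dyad_kg$ with $|k-j|\le 4$, and $\norm{\dot S_{k-1}f}_{L^\infty}\lesssim\norm f_{\Bdot^{d/2}_{2,1}}$. Since $k\ge j_0-5$, the block $\dyad_kg$ is measured in the high weight $2^{kd/2}$. The finitely many transition shells $j_0-5\le k\le j_0$ are handled by $2^{kd/2}\simeq 2^{k(d/2-1)}$, with constants depending on $j_0$.

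\textbf{Paraproduct $\dot T_gf$.} Here $f$ carries the frequency $2^k\simeq2^j$, so we need a bound on $\dot S_{k-1}g$ in $L^\infty$. We split $g=g^\low+g^\high$. For the low part, $\norm{\dot S_{k-1}g^\low}_{L^\infty}\lesssim\sum_{m\le j_0}2^{md/2}\norm{\dyad_mg}_2\lesssim 2^{j_0}\norm{g^\low}_{\Bdot^{d/2-1}_{2,1}}$. For the high part, $\norm{\dot S_{k-1}g^\high}_{L^\infty}\lesssim\norm{g^\high}_{\Bdot^{d/2}_{2,1}}$. Both bounds are independent of $k$, so
\[
2^{jd/2}\norm{\dyad_j\dot T_gf}_2\lesssim c_j\norm f_{\Bdot^{d/2}_{2,1}}\norm g_{\Bdot^{d/2-1,d/2}_{2,1}},
\qquad (c_j)\in\ell^1 .
\]
This is the step where the hybrid structure matters: a pure $\Bdot^{d/2-1}_{2,1}$ bound on $g$ would only give $\norm{\dot S_{k-1}g}_{L^\infty}\lesssim 2^k\norm g_{\Bdot^{d/2-1}_{2,1}}$, which loses a derivative at high frequencies. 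The low-frequency cap at $j_0$ is what removes this loss.

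\textbf{Remainder $\dot R(f,g)$.} We use $2^{jd/2}\norm{\dyad_j\dot R(f,g)}_2\lesssim\sum_{k\ge j-3}2^{-(k-j)d/2}\,2^{kd/2}\norm{\dyad_kf}_2\,2^{kd/2}\norm{\widetilde\dyad_kg}_{L^\infty}$ together with $\norm{\widetilde\dyad_kg}_{L^\infty}\lesssim 2^{kd/2}\norm{\widetilde\dyad_kg}_2$. Since $j\ge j_0-1$ forces $k\ge j_0-4$, the blocks of $g$ are again high-frequency (apart from finitely many transition shells), and $\sup_k 2^{kd/2}\norm{\widetilde\dyad_kg}_2\lesssim\norm g_{\Bdot^{d/2-1,d/2}_{2,1}}$. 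It then suffices to sum: the convolution of the $\ell^1$ sequence $2^{kd/2}\norm{\dyad_kf}_2$ against the kernel $2^{-(k-j)d/2}\mathbf 1_{k\ge j-3}$, which is summable since $d/2>0$, is in $\ell^1$ by Young's inequality.

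Adding the three pieces gives the high-frequency bound, and the componentwise statement for vectors and tensors is immediate. The main obstacle is the $\dot T_gf$ term, where the hybrid weight is essential. One must also keep careful track of the overlapping shells near $j_0$, where the constants depend on $j_0$.
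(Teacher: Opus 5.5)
Your argument is correct. The paper gives no proof of this lemma; it is quoted from Chen--Hao. The comparison worth making is with the route the paper's own remarks suggest.

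The remark following Theorem~\ref{thm:known-global} records that the hybrid norm is equivalent to the $\Bdot^{d/2-1}_{2,1}\cap\Bdot^{d/2}_{2,1}$ norm. Indeed, $2^{jd/2}\le 2^{j_0}2^{j(d/2-1)}$ for $j\le j_0$, and $2^{j(d/2-1)}\le 2^{1-j_0}2^{jd/2}$ for $j\ge j_0-1$. With this, the lemma reduces at once to two classical product laws:
\begin{itemize}
\item $\norm{fg}_{\Bdot^{d/2-1}_{2,1}}\lesssim\norm f_{\Bdot^{d/2}_{2,1}}\norm g_{\Bdot^{d/2-1}_{2,1}}$, which is Lemma~\ref{lem:m-product} and is exactly what you use for the low part;
\item the algebra property $\norm{fg}_{\Bdot^{d/2}_{2,1}}\lesssim\norm f_{\Bdot^{d/2}_{2,1}}\norm g_{\Bdot^{d/2}_{2,1}}$, for the high part.
\end{itemize}

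Your three Bony estimates on $j\ge j_0-1$ are precisely a re-proof of the algebra property restricted to high frequencies. The transition-shell bookkeeping disappears once you note that the hybrid norm controls the full $\Bdot^{d/2}_{2,1}$ norm of $g$, not only that of $g^\high$.

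For the same reason, the $\dot T_gf$ step is not where anything delicate happens. It only uses $\norm{\dot S_{k-1}g}_{L^\infty}\lesssim\norm g_{\Bdot^{d/2}_{2,1}}$. The derivative loss you describe is removed by the high-frequency index $d/2$. The cap at $j_0$ merely ensures that the low part of $g$ also lies in $\Bdot^{d/2}_{2,1}$.

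What your version buys is a self-contained argument with the $j_0$-dependence of the constants visible. The shortcut buys brevity and makes clear that nothing beyond standard product laws is involved.

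One small slip: in your remainder display, the last factor should be $\norm{\widetilde\dyad_kg}_{L^\infty}$, without the extra $2^{kd/2}$. After Bernstein this becomes $2^{kd/2}\norm{\widetilde\dyad_kg}_2$. As written, the exponents do not match the bound you then use, although the subsequent Young summation is correct.
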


\begin{corollary}[\cite{ChenHao2019}]\label{cor:critical-h-O}

Let $Q$ be the bilinear term in \eqref{eq:parametric-GOB}. Then
\begin{align}
\norm{Q(\tau,\nabla u)}_{\Bdot^{d/2-1,d/2}_{2,1}}
&\le C\norm\tau_{\Bdot^{d/2-1,d/2}_{2,1}}
\norm u_{\Bdot^{d/2+1}_{2,1}},
\label{eq:hyb-Q-app}\\
\norm{(\nabla u)\tau}_{\Bdot^{d/2-1,d/2}_{2,1}}
&\le C\norm\tau_{\Bdot^{d/2-1,d/2}_{2,1}}
\norm u_{\Bdot^{d/2+1}_{2,1}}.
\label{eq:hyb-g-app}
\end{align}
Consequently,
\begin{align}
&\norm{(\Lambda^{-1}\mathbb P\diver Q(\tau,\nabla u))^\low}
_{\Bdot^{d/2-1}_{2,1}}
+\norm{(\mathbb P\diver Q(\tau,\nabla u))^\high}
_{\Bdot^{d/2-1}_{2,1}}
\notag\\
&\qquad\le C\norm\tau_{\Bdot^{d/2-1,d/2}_{2,1}}
\norm u_{\Bdot^{d/2+1}_{2,1}}.
\label{eq:hyb-Q-p-app}
\end{align}
For each of the vector fields
$G_1=\nabla u\cdot\nabla\tau$ and
$G_2=\nabla u\cdot\nabla\Delta^{-1}\operatorname{div}\operatorname{div}\tau$,
the corresponding estimate is
\begin{equation}\label{eq:hyb-p-app}
\|(\Lambda^{-1}\mathbb P G_i)^\ell\|_{\dot B^{\scal}_{2,1}}
+\|(\mathbb P G_i)^h\|_{\dot B^{\scal}_{2,1}}
\le C\|\tau\|_{\dot B^{\scal,\scal+1}_{2,1}}
\|u\|_{\dot B^{\scal+2}_{2,1}},\qquad i=1,2.
\end{equation}
Here $\operatorname{div}u=0$ is required for the divergence identities
used to estimate $G_i$.
\end{corollary}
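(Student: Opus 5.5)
The plan is to reduce every estimate in the corollary to the product estimate of Lemma~\ref{lem:critical-h-m}, applied with the ``good'' factor $f=\nabla u$. By Bernstein, $\norm{\nabla u}_{\Bdot^{d/2}_{2,1}}\simeq\norm u_{\Bdot^{d/2+1}_{2,1}}$.

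\emph{Step 1: \eqref{eq:hyb-Q-app} and \eqref{eq:hyb-g-app}.} Each entry of $Q(\tau,\nabla u)=\tau W(u)-W(u)\tau+b(D(u)\tau+\tau D(u))$ is a finite linear combination, with coefficients depending only on $b$, of products $\tau_{ab}\,\partial_m u_n$. The same holds for each entry of $(\nabla u)\tau$. I would apply Lemma~\ref{lem:critical-h-m} componentwise with $f=\partial_mu_n\in\Bdot^{d/2}_{2,1}$ and $g=\tau_{ab}\in\Bdot^{d/2-1,d/2}_{2,1}$, and then sum the finitely many terms.

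\emph{Step 2: \eqref{eq:hyb-Q-p-app}.} The multipliers $\Lambda^{-1}\Pj\diver$ and $\Pj\diver$ commute with every $\dyad_j$. They are homogeneous of order $0$ and $1$ respectively, so on each annulus they satisfy $\norm{\dyad_j\Lambda^{-1}\Pj\diver h}_2\lesssim\norm{\dyad_jh}_2$ and $\norm{\dyad_j\Pj\diver h}_2\lesssim 2^j\norm{\dyad_jh}_2$. The first bound gives
\[
\norm{(\Lambda^{-1}\Pj\diver h)^\low}_{\Bdot^{d/2-1}_{2,1}}\lesssim\sum_{j\le j_0}2^{j(d/2-1)}\norm{\dyad_jh}_2 .
\]
The second gives
\[
\norm{(\Pj\diver h)^\high}_{\Bdot^{d/2-1}_{2,1}}\lesssim\sum_{j\ge j_0-1}2^{jd/2}\norm{\dyad_jh}_2 .
\]
The sum of the two right-hand sides is exactly $\norm h_{\Bdot^{d/2-1,d/2}_{2,1}}$ from \eqref{eq:h-two-in}. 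The finitely many overlapping shells only affect constants. Taking $h=Q(\tau,\nabla u)$ and using Step~1 then gives \eqref{eq:hyb-Q-p-app}.

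\emph{Step 3: \eqref{eq:hyb-p-app}.} I would first rewrite both vector fields in divergence form using $\diver u=0$:
\[
\partial_k(\partial_ju_k\,\tau_{ij})=\partial_j(\diver u)\,\tau_{ij}+\partial_ju_k\,\partial_k\tau_{ij}=[G_1]_i ,
\]
\[
\partial_k(\partial_iu_k\,\Delta^{-1}\diver\diver\tau)=\partial_i(\diver u)\,\Delta^{-1}\diver\diver\tau+\partial_iu_k\,\partial_k\Delta^{-1}\diver\diver\tau=[G_2]_i .
\]
Thus $G_1=\diver H_1$ and $G_2=\diver H_2$. Here $H_1$ has entries $\partial_ju_k\,\tau_{ij}$ and $H_2$ has entries $\partial_iu_k\,\tilde\tau$, with $\tilde\tau:=\Delta^{-1}\diver\diver\tau$. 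Since $\tilde\tau$ is an order-zero multiplier of $\tau$ commuting with $\dyad_j$, we have $\norm{\tilde\tau}_{\Bdot^{d/2-1,d/2}_{2,1}}\lesssim\norm\tau_{\Bdot^{d/2-1,d/2}_{2,1}}$. Step~1, i.e.\ Lemma~\ref{lem:critical-h-m}, then bounds $\norm{H_i}_{\Bdot^{d/2-1,d/2}_{2,1}}$ by $C\norm\tau_{\Bdot^{\scal,\scal+1}_{2,1}}\norm u_{\Bdot^{\scal+2}_{2,1}}$. Since $\Lambda^{-1}\Pj G_i=\Lambda^{-1}\Pj\diver H_i$ and $\Pj G_i=\Pj\diver H_i$, the order-counting argument of Step~2 yields \eqref{eq:hyb-p-app}.

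The only genuinely delicate point is Step~3. Estimating $G_i$ directly would put a derivative on $\tau$ at low frequency, which the hybrid norm cannot afford. The divergence identities move that derivative outside the product, where it is absorbed by $\Lambda^{-1}$ at low frequency and by the extra stress regularity at high frequency. Everything else is bookkeeping, given Lemma~\ref{lem:critical-h-m}.
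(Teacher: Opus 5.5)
Your proposal is correct and follows the route the paper indicates: the corollary is only quoted from Chen--Hao, but the divergence identities recorded in Step~4 of the proof of Proposition~\ref{prop:act-Lyap} are exactly your Steps~1--3. So are the reduction to Lemma~\ref{lem:critical-h-m} and the remark there that one derivative is gained through $\Lambda^{-1}$ at low frequency and absorbed by the extra stress regularity at high frequency. One cosmetic fix: the entries of $H_1$ carry a sum over $j$, i.e.\ $(H_1)_{ik}=\sum_j\partial_ju_k\,\tau_{ij}$.
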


\begin{lemma}[\cite{BCD2011,Zhai2021}]\label{lem:actives-com}

Let $u$ be divergence-free.  Then
\begin{align*}
&\sum_{j\in\mathbb Z}2^{j(d/2-1)}
\norm{[\dyad_j,u\cdot\nabla]f}_2
\le C\norm{\nabla u}_{\Bdot^{d/2}_{2,1}}
\norm f_{\Bdot^{d/2-1}_{2,1}},
\\
&\norm{([\Lambda^{-1}\mathbb P\diver,u\cdot\nabla]\tau)^\low}
_{\Bdot^{d/2-1}_{2,1}}
\le C\norm u_{\Bdot^{d/2+1}_{2,1}}
\norm\tau_{\Bdot^{d/2-1,d/2}_{2,1}},\\
&\sum_{j\le j_0}2^{j(d/2-1)}
\norm{[\Lambda,u\cdot\nabla]\dyad_j u}_2
\le C\norm{\nabla u}_{\Bdot^{d/2}_{2,1}}
\norm{u^\low}_{\Bdot^{d/2-1}_{2,1}}.
\end{align*}
\end{lemma}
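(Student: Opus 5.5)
The three estimates are of different natures, so I will treat them separately, working dyadically and reducing each to Bernstein's inequality plus a kernel bound for the relevant commutator.

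\emph{First estimate.} This is the standard critical transport commutator estimate. I split
\[
[\dyad_j,u\cdot\nabla]f=[\dyad_j,\dot T_{u}\cdot\nabla]f+\dyad_j\bigl(\dot T_{\nabla f}u+\dot R(u,\nabla f)\bigr)-\bigl(\dot T'_{\nabla\dyad_jf}u\bigr),
\]
with the usual rearrangement. For the paraproduct commutator $[\dyad_j,\dot S_{k-1}u]\partial\dyad_kf$ with $|k-j|\le4$, I write $\dyad_j$ as convolution with $2^{jd}h(2^j\cdot)$. The first-moment bound $\int|y||h_j(y)|\,dy\lesssim2^{-j}$ then yields
\[
\norm{\cdot}_2\lesssim\norm{\nabla u}_{L^\infty}\norm{\dyad_kf}_2 .
\]
Multiplying by $2^{j(d/2-1)}$, summing, and using $\norm{\nabla u}_{L^\infty}\lesssim\norm{\nabla u}_{\Bdot^{d/2}_{2,1}}$ gives the bound. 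For the remaining paraproduct and remainder pieces I would quote the product laws. Here $\diver u=0$ lets me write $\dot R(u,\nabla f)=\diver\dot R(u,f)$, and the remainder needs $d/2-1+d/2>0$, which holds. This is exactly \cite[Lemma~2.100]{BCD2011}, so I would essentially cite it.

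\emph{Second estimate.} I decompose $v\cdot\nabla\tau$ by Bony as in the sketch of Lemma~\ref{lem:zero-o-c}, now with $\ell^1$ summation at index $s=d/2-1$ and only low output frequencies. Four cases arise.
\begin{itemize}
\item The low--high interaction $[\cT,\dot T_u]\nabla\tau$ is handled with the first-moment bound of the kernel of $\cT\widetilde{\dyad}_k$, exactly as in part (i) of Step~2 of Lemma~\ref{lem:same-order}. This gives $\norm{\nabla u}_{L^\infty}\norm{\dyad_k\tau}_2$ for $|k-j|\le N_0$ and $j\le j_0$, hence $\lesssim\norm{u}_{\Bdot^{d/2+1}_{2,1}}\norm{\tau^\low}_{\Bdot^{d/2-1}_{2,1}}$ after the finitely many overlap shells.
\item The terms $\cT\dot T_{\nabla\tau}u$ and $\dot T_{\nabla\cT\tau}u$ are bounded by $\norm{\nabla\tau}_{\Bdot^{-2}_{\infty,1}}$-type norms times $\norm{u}_{\Bdot^{d/2+1}_{2,1}}$, which is fine for low $\tau$ frequencies.
\item For the remainders I use the divergence form $\diver(u\otimes\tau)$. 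The derivative falls at the output frequency $2^j$, and $\norm{\dyad_j h}_2\lesssim2^{jd/2}\norm{h}_1$ then controls high-frequency comparable interactions by $\norm{\tau^\high}_{\Bdot^{d/2}_{2,1}}$. This is where the hybrid norm is needed.
\end{itemize}

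\emph{Third estimate.} For fixed $j\le j_0$, the operator $\Lambda$ restricted to frequencies near $2^j$ is $\Lambda\widetilde{\dyad}_j$, whose kernel has first moment $\lesssim1$, i.e. it is order one. I split $u=\dot S_{j-N}u+(u-\dot S_{j-N}u)$. The low part gives $\norm{\nabla u}_{L^\infty}\norm{\nabla\dyad_ju}_2\cdot2^{-j}\cdot2^j$, and via the kernel bound this yields $\lesssim\norm{\nabla u}_{L^\infty}\norm{\dyad_ju}_2$. For the high part there is no commutator gain, so I bound the two terms directly:
\[
\norm{\Lambda(u_k\cdot\nabla\dyad_ju)}_2\lesssim2^k\norm{u_k}_\infty2^j\norm{\dyad_ju}_2,
\qquad
\norm{u_k\cdot\nabla\Lambda\dyad_ju}_2\lesssim\norm{u_k}_\infty2^{2j}\norm{\dyad_ju}_2 .
\]
Since $\norm{u_k}_\infty\lesssim2^{-k}\norm{\nabla u_k}_\infty$ and $k\ge j-N$, both terms are $\lesssim\norm{\nabla u}_{L^\infty}\norm{\dyad_ju}_2$ after summing in $k$. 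Multiplying by $2^{j(d/2-1)}$ and summing over $j\le j_0$ gives the claim.

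\emph{Main obstacle.} I expect the hardest part to be the high-frequency stress in the second estimate. The $\tau^\high$ factor is measured only at $\Bdot^{d/2}$, one derivative more, so the low-output remainder must gain through $2^{j(d/2+1)}$ and the $L^1$ Bernstein trick. I would handle it exactly as in the $\mathcal I$-block computation of Lemma~\ref{lem:same-order}.
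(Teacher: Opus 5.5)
Your proposal is correct and follows essentially the paper's route: you cite \cite[Lemma~2.100]{BCD2011} for the first estimate, and run the Bony decomposition of Lemma~\ref{lem:zero-o-c} at the $\ell^1$ endpoint for the second. There you use the first-moment kernel bound for $[\cT,\dot T_u]\nabla\tau$, $\Bdot^{-2}_{\infty,1}$-control of $\nabla\tau$ in the paraproducts, and the divergence form together with $\norm{\dyad_jh}_{L^2}\lesssim2^{jd/2}\norm{h}_{L^1}$ for the remainders. For the third, you use the commutator structure only when the coefficient sits at lower frequency and bound the rest directly; the main difference from the paper is that you put $L^\infty$ on the coefficient blocks $\dyad_ku$, whereas the paper puts it on $\dyad_ju$ via Bernstein.

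One bookkeeping correction in the third estimate. Each of your pieces actually gives $2^{j}\norm{\nabla u}_{\Bdot^{0}_{\infty,1}}\norm{\dyad_ju}_{L^2}$: the order-one operator $\Lambda$ costs a factor $2^j$, and summing $\norm{\nabla\dyad_ku}_{L^\infty}$ over $k$ needs $\Bdot^{d/2}_{2,1}\hookrightarrow\Bdot^{0}_{\infty,1}$ rather than $\norm{\nabla u}_{L^\infty}$. Your stated block bound therefore holds only because $2^j\le1$ for $j\le j_0<0$, which is precisely why this estimate is confined to low frequencies.
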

\begin{proof}[Sketch of proof]
The first estimate is the classical critical commutator estimate \cite[Lemma~2.100]{BCD2011}. The second is the hybrid, $\ell^1$ version of Lemma~\ref{lem:zero-o-c} at the endpoint $s=d/2-1$: the proof is identical, the only difference being that at this endpoint the bound $\norm{\nabla\dot S_{k-1}\tau}_{L^\infty}\lesssim 2^{2k}\norm{\tau}_{\Bdot^{d/2-1}_{2,1}}$ is used together with the $\ell^1$ summability of $2^{k(d/2+1)}\norm{\dyad_ku}_2$.
For the third estimate, write $u\cdot\nabla u_j=\dot T_u\nabla u_j+\dot T_{\nabla u_j}u+\dot R(u,\nabla u_j)$ with $u_j=\dyad_ju$. The paraproduct commutator $[\Lambda,\dot T_u]\nabla u_j$ involves a multiplier of order one and satisfies $\norm{[\Lambda,\dot T_u]\nabla u_j}_2\lesssim\norm{\nabla u}_{L^\infty}\norm{\nabla u_j}_2\lesssim2^j\norm{\nabla u}_{\Bdot^{d/2}_{2,1}}\norm{u_j}_2$ \cite[Lemma~2.99]{BCD2011}, which gives the claim for this term after multiplication by $2^{j(d/2-1)}$ and summation. For the remaining terms, $\Lambda$ costs a factor $2^k$ on the blocks $\dyad_ku$, $k\ge j-3$, while $\norm{\dot S_{k-1}\nabla u_j}_{L^\infty}+\norm{\nabla u_j}_{L^\infty}+2^{-j}\norm{\nabla\Lambda u_j}_{L^\infty}\lesssim2^{j(1+d/2)}\norm{u_j}_2$; hence
\[
\begin{aligned}
	&2^{j(d/2-1)}
	\Bigl(
	\norm{\Lambda\dot T_{\nabla u_j}u}_2
	+\norm{\dot T_{\nabla\Lambda u_j}u}_2\\
	&\qquad
	+\norm{\Lambda\dot R(u,\nabla u_j)}_2
	+\norm{\dot R(u,\nabla\Lambda u_j)}_2
	\Bigr)\\
	&\qquad\lesssim
	2^{jd/2}\norm{u_j}_2
	\sum_{k\ge j-3}
	2^{(j-k)d/2}\,
	2^{k(d/2+1)}\norm{\dyad_k u}_2.
\end{aligned}
\]
Summing over $j\le j_0$ by Young's inequality for sequences and using $2^{jd/2}\le2^{j(d/2-1)}$ for $j\le0$ yields the bound $C\norm{u^\low}_{\Bdot^{d/2-1}_{2,1}}\norm{u}_{\Bdot^{d/2+1}_{2,1}}$, which is the claim.
\end{proof}

\begin{lemma} \label{key-le-10}
	Let $\gamma_1>0$ and $\gamma_2>0$. Suppose that
	nonnegative sequences $(x_j)_{j\le j_0}$ and $(e_j)_{j\le j_0}$ satisfy
	\begin{equation}\label{eq:B-v-hyp}
		 	\sum_{j\le j_0}2^{j\gamma_1}x_j<\infty,
		 \qquad e\in\ell^1,
	\end{equation}
	and
	\begin{equation}	\label{eq:B-v-ineq}
		x_j\le A+C_0e_j\sum_{k<j}2^{-(j-k)\gamma_1}x_k
	+C_0\sum_{k\ge j}2^{-(k-j)\gamma_2}e_kx_k.
	\end{equation}
	Then there exists $e_*>0$, depending only on $C_0$ and $\gamma_1$, such that if
	$\norm{e}_{\ell^1}\le e_*$, then
	$$\sup_{j\le j_0}x_j\le C A$$
	for some $C>0$ depending only on $C_0$, $\gamma_1$ and $\norm{e}_{\ell^1}$.
\end{lemma}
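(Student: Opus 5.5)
The plan is to convert \eqref{eq:B-v-ineq} into a uniform bound by a bootstrap on finite truncations, and then remove the truncation using \eqref{eq:B-v-hyp}. For $J\le j_0$, set
\[
m_J:=\sup_{J\le j\le j_0}x_j ,
\]
which is finite because only finitely many indices are involved. The difficulty is that the sum over $k<j$ in \eqref{eq:B-v-ineq} reaches indices below $J$, where no control is available yet. This is exactly where the weighted summability hypothesis enters. For each $k<J$ it gives
\[
x_k\le 2^{-k\gamma_1}S,\qquad S:=\sum_{k\le j_0}2^{k\gamma_1}x_k<\infty .
\]
Consequently, for $j\ge J$,
\[
\sum_{k<J}2^{-(j-k)\gamma_1}x_k\le 2^{-j\gamma_1}\sum_{k<J}2^{k\gamma_1}x_k=:2^{-j\gamma_1}\delta_J,
\]
and $\delta_J\to0$ as $J\to-\infty$, being the tail of a convergent series.

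\textbf{Step 1 (closing the truncated inequality).} Fix $j$ with $J\le j\le j_0$. We bound the two sums in \eqref{eq:B-v-ineq} separately.

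For the lower sum, the indices $J\le k<j$ contribute at most
\[
C_0e_j\,m_J\sum_{n\ge1}2^{-n\gamma_1}\le C_0(2^{\gamma_1}-1)^{-1}\|e\|_{\ell^1}\,m_J .
\]
The indices $k<J$ contribute at most $C_0e_j2^{-j\gamma_1}\delta_J$.

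For the upper sum, all indices satisfy $k\ge j\ge J$, so it is bounded by
\[
C_0\,m_J\sum_{k\ge j}e_k\le C_0\|e\|_{\ell^1}\,m_J .
\]

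Taking the supremum over $J\le j\le j_0$ gives
\[
m_J\le A+C_1\|e\|_{\ell^1}\,m_J+C_0\|e\|_{\ell^1}\,2^{-J\gamma_1}\delta_J,
\]
where $C_1=C_0\bigl(1+(2^{\gamma_1}-1)^{-1}\bigr)$. Choose $e_*$ so that $C_1e_*\le1/2$. Since $m_J<\infty$, the middle term can be absorbed, which yields
\[
m_J\le 2A+2C_0e_*\,2^{-J\gamma_1}\delta_J .
\]

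\textbf{Step 2 (removing the truncation).} The remaining term $2^{-J\gamma_1}\delta_J$ need not tend to zero, since $2^{-J\gamma_1}$ blows up as $J\to-\infty$. This is the main obstacle, and the crude splitting above is too lossy to handle it directly. I would refine the treatment of the lower sum as follows. For $j\ge J$, the contribution from $k<J$ is bounded by
\[
e_j\,2^{-(j-J)\gamma_1}\sum_{k<J}2^{-(J-k)\gamma_1}x_k .
\]
This is again of the form of the left-hand side of \eqref{eq:B-v-ineq} evaluated at the index $J$, which suggests an induction downward in $J$. Concretely, set
\[
y_J:=\sum_{k<J}2^{-(J-k)\gamma_1}x_k .
\]
From \eqref{eq:B-v-ineq} one obtains a recursion
\[
y_{J+1}\le 2^{-\gamma_1}\bigl(y_J+x_J\bigr),
\]
and $x_J$ is itself controlled by $A+C\|e\|\,(y_J+\text{upper terms})$. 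Hypothesis \eqref{eq:B-v-hyp} gives $2^{J\gamma_1}y_J\to0$. I would like to conclude $\sup_J y_J\le CA$ by a discrete Gronwall argument applied to the pair $(x_J,y_J)$, using that the contraction factor $2^{-\gamma_1}$ together with the smallness of $\|e\|_{\ell^1}$ leaves a net gain.

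\textbf{Fallback.} If Step 2 resists, I would instead weight the supremum, defining
\[
m_J^{\theta}:=\sup_{j\ge J}2^{(j-J)\theta}x_j
\]
for some $0<\theta<\gamma_1$. The aim is to find a weighted quantity for which the tail from $k<J$ is controlled by the same quantity with an $O(\|e\|_{\ell^1})$ coefficient, and then to absorb it. Once $\sup_{j\le j_0}x_j\le CA$ is established, the stated bound follows, with $C$ depending only on $C_0$, $\gamma_1$ and $\|e\|_{\ell^1}$.
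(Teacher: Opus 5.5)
Your Step 1 is correct, and in Step 2 you have the right objects: your $y_J$ is exactly the quantity $H_J=\sum_{k<J}2^{-(J-k)\gamma_1}x_k$ used in the paper, and $y_{J+1}=2^{-\gamma_1}(y_J+x_J)$ is the right recursion. The gap is in how you propose to close it. You bound $x_J$ by $A+C\|e\|_{\ell^1}y_J$ and rely on a net contraction $q':=2^{-\gamma_1}(1+C\|e\|_{\ell^1})<1$. Iterating gives $y_j\le (q')^{j-J}y_J+CA$. Letting $J\to-\infty$ would then require $(1+C\|e\|_{\ell^1})^{j-J}\,2^{-(j-J)\gamma_1}y_J\to0$. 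But \eqref{eq:B-v-hyp} only gives $2^{-(j-J)\gamma_1}y_J=2^{-j\gamma_1}\sum_{k<J}2^{k\gamma_1}x_k\to0$, with no rate that beats the exponentially growing factor.

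This cannot be fixed by choosing constants more cleverly. Replace the coefficient $e_j$ of the lower sum by a constant $\epsilon>0$, and take $x_j=\mu^{-j}$ with $1<\mu<2^{\gamma_1}$. These $x_j$ satisfy \eqref{eq:B-v-hyp}, and $\sum_{k<j}2^{-(j-k)\gamma_1}x_k=\frac{2^{-\gamma_1}\mu}{1-2^{-\gamma_1}\mu}\,x_j$. So for $\mu$ close to $2^{\gamma_1}$ the inequality holds with $A=0$, although $x_j$ is unbounded. Hence any argument that uses only smallness of $\sup_je_j$ or of $\|e\|_{\ell^1}$ in front of the tail $y_J$ must fail. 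Your weighted-supremum fallback does not address this either.

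The missing idea is to keep the pairing of $e_j$ with the decay $2^{-(j-J)\gamma_1}$, which you actually wrote down in Step 2. The coefficient of $y_J$ in the bound for $m_J$ satisfies
$\sup_{J\le j\le j_0}e_j2^{-(j-J)\gamma_1}\le\delta_J:=\sum_{J\le m\le j_0}2^{-(m-J)\gamma_1}e_m$, and $\sum_{J\le j_0}\delta_J=\|e\|_{\ell^1}/(1-2^{-\gamma_1})<\infty$. Step 1 then yields $m_J\le 2A+2C_0\delta_Jy_J$, and the recursion becomes $y_{J+1}\le 2^{-\gamma_1}(1+2C_0\delta_J)y_J+2^{1-\gamma_1}A$.

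No contraction beyond $2^{-\gamma_1}$ is needed. The products $\prod_\ell(1+2C_0\delta_\ell)$ are bounded uniformly by $\exp\bigl(2C_0\|e\|_{\ell^1}/(1-2^{-\gamma_1})\bigr)$, so $y_j\le C2^{-(j-J)\gamma_1}y_J+CA$. The first term vanishes as $J\to-\infty$ by exactly \eqref{eq:B-v-hyp}, and then $x_j\le m_j\le CA$. This is the paper's argument.
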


\begin{proof}
	Set $q:=2^{-\gamma_1}\in(0,1)$ and
	\[
	B_j:=\sup_{j\le m\le j_0}x_m,
	\qquad H_j:=\sum_{k<j}q^{j-k}x_k,
	\qquad \delta_j:=\sum_{m\ge j}q^{m-j}e_m.
	\]
	For $j\le m\le j_0$, split the first sum in \eqref{eq:B-v-ineq} at $j$:
	\[
	\sum_{k<m}q^{m-k}x_k
	=q^{m-j}H_j+\sum_{j\le k<m}q^{m-k}x_k
	\le q^{m-j}H_j+\frac{q}{1-q}B_j,
	\]
	and bound the second sum by $B_j\sum_{k\ge m}e_k\le\norm{e}_{\ell^1}B_j$. Since $e_mq^{m-j}\le\delta_j$ for $m\ge j$, taking the supremum over
	$j\le m\le j_0$ gives
	\[
	B_j\le A+C\norm{e}_{\ell^1}B_j+C\delta_jH_j.
	\]
	After decreasing $e_*$, the middle term is absorbed, and hence
	$B_j\le CA+C\delta_jH_j$. 
    
    Since
	$H_{j+1}=qH_j+q x_j$, it follows that
	\[
	H_{j+1}\le q(1+C\delta_j)H_j+CqA.
	\]
	Iteration from $J<j$, 	we obtain
	\begin{align}
		H_j
		&\le
		q^{j-J}
		\prod_{\ell=J}^{j-1}\bigl(1+C\delta_\ell\bigr)H_J
		+CqA\sum_{m=J}^{j-1}
		q^{j-1-m}
		\prod_{\ell=m+1}^{j-1}
		\bigl(1+C\delta_\ell\bigr).
		\label{eq:Hj-iteration}
	\end{align}
	By the definition
	\[
	\delta_\ell
	:=\sum_{m\ge \ell}q^{m-\ell}e_m,
	\]
	Tonelli's theorem gives
	\begin{align*}
		\sum_{\ell\le j_0}\delta_\ell
		&=
		\sum_{\ell\le j_0}
		\sum_{\substack{m\ge\ell\\m\le j_0}}
		q^{m-\ell}e_m
		\nonumber\\
		&=
		\sum_{m\le j_0}e_m
		\sum_{\ell\le m}q^{m-\ell}
		=
		\frac{1}{1-q}
		\sum_{m\le j_0}e_m
		<\infty.
	\end{align*}
	Since \(\log(1+s)\le s\) for \(s\ge0\), it follows that,
	for all \(a\le b\le j_0\),
	\[
	\prod_{\ell=a}^{b}\bigl(1+C\delta_\ell\bigr)
	\le
	\exp\left(
	C\sum_{\ell=a}^{b}\delta_\ell
	\right)
	\le
	\exp\left(
	\frac{C}{1-q}
	\sum_{m\le j_0}e_m
	\right)
	=:C_*.
	\]
	In particular, \(C_*\) is independent of \(a\), \(b\), \(J\),
	and \(j\). Therefore, \eqref{eq:Hj-iteration} implies
	\begin{align*}
		H_j
		&\le
		C_*q^{j-J}H_J
		+
		CC_*qA\sum_{m=J}^{j-1}q^{j-1-m}
		\nonumber\\
		&\le
		C_*q^{j-J}H_J
		+
		\frac{CC_*q}{1-q}A
		\le
		Cq^{j-J}H_J+CA.
	\end{align*}
    
Therefore it yields
	\[
	H_j\le Cq^{j-J}H_J+CA.
	\]
	For fixed $j$,
	\[
	q^{j-J}H_J
	=2^{-j\gamma_1}
	\sum_{k<J}2^{k\gamma_1}x_k
	\longrightarrow0
	\qquad(J\to-\infty),
	\]
	by \eqref{eq:B-v-hyp}. Thus $H_j\le CA$ and then $B_j\le CA$.
\end{proof}

	\section*{Acknowledgements.}
	Z. Chen is supported by National Natural Science Foundation of China (No. 12501294).
	M. Fei is supported partly by National Natural Science Foundation of China (No.12271004, No.12471222) and Natural Science Foundation of Anhui 
Province of China (No.2308085J10). L. Liu is supported by National Natural Science Foundation of China (No.12571243) and Natural Science Foundation of Anhui Province (No.2508085Y004). J. Wu is supported by the National Science Foundation of the United States (DMS 2104682 and DMS 2309748).{ The authors acknowledge the assistance of ChatGPT (OpenAI) with
language polishing and the exposition of selected arguments.
The authors take full responsibility for the mathematical content
and the final version of the manuscript.}
	
	\section*{Declaration }
	\textbf{Conflict of interest:} The authors declared that they have no conflicts of interest to this work.

 

\end{document}